\documentclass[11pt, a4paper]{amsart}

\usepackage{amssymb,amsfonts}

\usepackage{mathtools}

\usepackage{amsmath,amsthm}

\usepackage{mathrsfs}

\usepackage{hyperref}

\usepackage[all]{xy}

\usepackage[active]{srcltx}

\usepackage{enumitem}

\allowdisplaybreaks

\numberwithin{equation}{section}

\newcommand{\Q}{\mathbb{Q}}
\newcommand{\N}{\mathbb{N}}
\newcommand{\R}{\mathbb{R}}
\newcommand{\Z}{\mathbb{Z}}

\newcommand{\CT}{\mathcal{T}}

\newcommand{\CA}{\mathfrak{A}}
\newcommand{\CB}{\mathcal{B}}
\newcommand{\CC}{\mathscr{C}}

\newcommand{\CP}{\mathscr{P}}
\newcommand{\CQ}{\mathscr{Q}}
\newcommand{\CS}{\mathscr{S}}

\newcommand{\CM}{\mathscr{M}}
\newcommand{\cB}{\mathcal{B}}

\newcommand{\fm}{\mathfrak{m}}

\newcommand{\s}{\sigma}

\newcommand{\vt}{\vartheta}
\newcommand{\ox}{\otimes}
\newcommand{\x}{\times}
\newcommand{\sm}{\setminus}
\newcommand{\smz}{\setminus \{0\}}
\newcommand{\Nil}{\mathrm{Nil}}
\newcommand{\wt}{\widetilde}
\newcommand{\id}{\mathrm{id}}
\newcommand{\vf}{\varphi}

\newcommand{\ul}{\underline}

\newcommand{\tP}{{\widetilde{P}}}
\newcommand{\tQ}{{\widetilde{Q}}}

\newcommand{\Bigmid}{\, \Big|\,}

\DeclareMathOperator{\im}{Im}

\DeclareMathOperator{\sign}{sign}
\DeclareMathOperator{\sgn}{sgn}
\DeclareMathOperator{\Sym}{Sym}

\DeclareMathOperator{\Int}{Int}

\DeclareMathOperator{\Trd}{Trd}

\DeclareMathOperator{\diag}{diag}

\DeclareMathOperator{\GL}{GL}
\DeclareMathOperator{\PSD}{PSD}
\DeclareMathOperator{\PD}{PD}

\renewcommand{\geq}{\geqslant}
\renewcommand{\leq}{\leqslant}

\renewcommand{\ge}{\geqslant}
\renewcommand{\le}{\leqslant}

\newcommand{\swap}{\widehat{\phantom{xx}}}

\newcommand{\df}{\emph}
\newcommand{\ve}{\varepsilon}

\newcommand{\op}{\mathrm{op}}

\newcommand{\pf}[1]{\langle\!\langle #1\rangle\!\rangle}

\newcommand{\qf}[1]{\langle #1\rangle}

\newcommand{\ns}{\mathrm{ns}}

\DeclareMathOperator{\Herm}{\mathfrak{Herm}}

\newtheorem{thm}{Theorem}[section]
\newtheorem{prop}[thm]{Proposition}
\newtheorem{cor}[thm]{Corollary}
\newtheorem{lemma}[thm]{Lemma}
\newtheorem*{claim}{Claim}
\theoremstyle{definition}
\newtheorem{defi}[thm]{Definition}

\newtheorem{remark}[thm]{Remark}
\newtheorem{ex}[thm]{Example}

\begin{document}

\title[Signature maps 
from positive cones]{Signature maps 
from positive cones on algebras with involution}

\author[V. Astier]{Vincent Astier}
\author[T. Unger]{Thomas Unger}
\address{School of Mathematics and Statistics, University College Dublin,
Belfield, Dublin 4, Ireland}
\email{vincent.astier@ucd.ie}
\email{thomas.unger@ucd.ie}

\subjclass[2010]{13J30, 16W10, 06F25, 16K20, 11E39}
\keywords{Real algebra,
Algebras with involution, Orderings,
Hermitian forms}

\maketitle

\begin{abstract}
  We introduced positive cones in an earlier paper as a notion of ordering
  on central simple algebras with involution that corresponds to
  signatures of hermitian forms. In the current paper we 
  describe signatures of hermitian forms directly out of positive
  cones, and also use this approach to rectify a problem that affected some results in
  the previously mentioned paper.
\end{abstract}

\tableofcontents

\section{Introduction}

In \cite{A-U-pos} we introduced the notion of positive cones for central
simple algebras with involution, inspired by the classical real algebra
of ordered fields. They are linked to signatures of hermitian forms, whose
investigation we started in \cite{A-U-kneb}, inspired by \cite{BP98}. 
We also gave a complete description of the kernels of the signatures maps
in \cite{A-U-prime}. 

In the current paper, after providing the necessary background in
Section~\ref{secprelim}, we show in Section~\ref{secsign} how to directly
obtain such a kernel out of a given positive cone. This construction also
allows us to rectify a recently discovered mistake in \cite{A-U-pos}.
Specifically, the mistake occurs in the proof of \cite[Lemma~5.5]{A-U-pos}, and
the lemma itself is likely incorrect. Hence, the lemmas in the remainder of
\cite[Section~5]{A-U-pos} (and their consequences) are potentially incorrect.
These lemmas are used to prove \cite[Proposition~5.8]{A-U-pos} which is the
only result in \cite[Section~5]{A-U-pos} that is used in the remainder of
\cite{A-U-pos}. In this paper we provide in particular an entirely different
proof of \cite[Proposition~5.8]{A-U-pos}, so that all the results in
\cite{A-U-pos} now have correct proofs, except for \cite[Lemmas~5.5, 5.6 and
5.7]{A-U-pos} which are no longer needed.

Note that in the process of reproving \cite[Proposition~5.8]{A-U-pos}, we
provide more direct proofs of results in \cite{A-U-pos} that were originally
obtained as consequences of \cite[Proposition~5.8]{A-U-pos}. Therefore, we
clearly indicate in each statement in Sections~\ref{secdesc} and \ref{secmax}
if it already appeared in \cite{A-U-pos}.

\section{Preliminaries}
\label{secprelim}

All fields in this paper are assumed to have characteristic different from $2$.
Let $F$ be such a field. We denote by $W(F)$ the Witt ring of $F$, by $X_F$ the
space of orderings of $F$, and by $F_P$ a real closure of $F$ at an ordering
$P\in X_F$.  We often denote the unique ordering on $F_P$ by $\tP$.

By an \emph{$F$-algebra with involution} we mean a pair
$(A,\s)$ where $A$ is a finite-dimensional simple $F$-algebra with centre a
field $K=Z(A)$, equipped with an involution $\s:A\to A$, such that $F = K \cap
\Sym(A,\s)$, where $\Sym(A,\s):=\{a\in A \mid \s(a)=a\}$. More
generally we let $\Sym_\ve(A,\s):=\{a\in A \mid \s(a)= \ve a\}$ for 
$\ve\in \{-1,+1\}$. If $A$ is a division algebra, we call $(A,\s)$ an 
\emph{$F$-division algebra with involution}.

Observe that $[K:F]\le 2$.
We say that 
$\s$ is \df{of the first kind} if $K=F$ and \df{of the second kind} (or 
\emph{of unitary type}) otherwise. 
Involutions of the first kind can be further subdivided into those of 
\emph{orthogonal type} and those of \emph{symplectic 
type}, depending on the dimension of $\Sym(A,\s)$, cf. 
\cite[Sections~2.A and 2.B]{BOI}.
We let $\iota=\s|_{K}$ and note that $\iota =\id_F$ if $\s$ is of the first 
kind. 

We denote by $W(A,\s)$ the Witt group  of Witt equivalence classes of 
nonsingular hermitian forms over $(A,\s)$ defined on 
finitely generated right $A$-modules. Note that $W(A,\s)$ is a $W(F)$-module.
We denote isometry of forms by $\simeq$. For  $a_1, \ldots, a_k$ in 
$\Sym(A,\s)$ the notation $\qf{a_1,\ldots, a_\ell}_\s$ stands for the diagonal 
hermitian form  
\[ \bigl(  (x_1,\ldots, x_\ell), (y_1,\ldots, y_\ell)  \bigr) \in A^\ell \x 
A^\ell \mapsto  
 \sum_{i=1}^\ell \s(x_i) a_i y_i \in A.\]
We often identify nonsingular quadratic and hermitian forms with their
Witt classes if no confusion is possible. If $h$ is a hermitian form over
$(A,\s)$, we denote the set of elements represented by $h$ by $D_{(A,\s)}(h)$.

We denote by $\Int(u)$ the inner automorphism determined by $u \in A^\x$, i.e., 
$\Int(u)(x):= uxu^{-1}$ for $x\in A$. We also write $\s^t$ for the involution
$(a_{ij})\mapsto (\s(a_{ji}))$ on $M_n(A)$.

\subsection{Signatures of hermitian forms over quadratic field extensions and
quaternions}
\label{secqq}

Let $k$ be a field  
and let $D_k \in \{k, k(\sqrt{d}), (a,b)_k\}$,
where $a,b,d\in k$,
$k(\sqrt{d})\not=k$ and $(a,b)_k$ is a quaternion division algebra over 
$k$. Let $\vt_k$ denote the canonical involution on $D_k$,
i.e., the identity map on $k$ or conjugation in the remaining cases. 

If $h$ is a
hermitian form over $(D_k,\vt_k)$, then $b_h(x):=h(x,x)$ is a quadratic form
over $k$. A straightforward computation shows that if 
$h\simeq\qf{a_1,\ldots,a_n}_{\vt_k}$ with $a_1,\ldots,a_n \in 
\Sym(D_k,\vt_k)= k$, then
\begin{equation}\label{sign0}
  b_h \simeq \begin{cases} \qf{a_1,\ldots,a_n}  & \text{if } 
  D_k=k \\
  \qf{1,-d}\ox \qf{a_1,\ldots,a_n}  & \text{if } 
  D_k=k(\sqrt{d}) \\
  \qf{1,-a,-b,ab}\ox \qf{a_1,\ldots,a_n} & \text{if } 
  D_k=(a,b)_k \end{cases}.
\end{equation}
By Jacobson's theorem (cf. \cite[Chapter~10, Theorems~1.1 and 1.7,
Remark~1.3]{Sch}) the map 
\[
  W(D_k,\vt_k) \rightarrow W(k),\ h \mapsto b_h
\] 
is injective. 

Let $P\in X_k$. The preceding paragraph motivates defining the signature of
$h$ at $P$ in terms of the Sylvester signature $\sign_P b_h$ as follows:
\begin{equation}\label{sign1}
  \sign_P h := \begin{cases} \sign_P b_h  & \text{if } 
  D_k=k \\
    \tfrac{1}{2} \sign_P b_h & \text{if }  D_k=k(\sqrt{d})\text{ with }d<_P 0 \\
   \tfrac{1}{4} \sign_P b_h & \text{if } D_k=(a,b)_k \text{ with }a,b <_P 0\\
   0 & \text{in all remaining cases}
  \end{cases}.
\end{equation}

\begin{remark}
  If $D_k=k(\sqrt{d})$, skew-hermitian forms over $(D_k,\vt_k)$ are equivalent
  to hermitian forms, cf. \cite[Lemma~2.1(iii)]{A-U-kneb}.

  If $D_k \in \{k,(a,b)_k\}$ and $h$ is a skew-hermitian form over
  $(D_k,\vt_k)$, or a hermitian or skew-hermitian form over $(D_k\x D^\op_k,
  \swap)$ with $\widehat{(x,y^\op)}:=(y,x^\op)$ the exchange involution, then
  (the nonsingular part of) $h$ is torsion in the Witt group and we let
  \begin{equation}\label{sign2} \sign_P h:=0, \end{equation} cf.
  \cite[Section~3.1 and Lemma~2.1]{A-U-kneb}.
\end{remark}

\subsection{Signatures of hermitian forms over $F$-algebras with involution}
\label{csa-sign}

Returning to the general case of an $F$-algebra with involution $(A,\s)$, let
$P\in X_F$ and let   
\[  
 (D_P,\vt_P):=(D_{F_P}, \vt_{F_P}) \in \{ (F_P, \id), (F_P(\sqrt{-1}),-),
((-1,-1)_{F_P},-)\},
\]  
using the notation
from Section~\ref{secqq}.  
We define the signature
of a hermitian form $h$ over $(A,\s)$ by extending scalars to $F_P$. Write
$Z(A)=F(\sqrt{d})$ with $d\in F$. We consider two cases:

(1) If $\s$ is of the second kind and $d>_P0$,
then $Z(A)\ox_F F_P \cong F_P\x F_P$ and
we obtain
\begin{equation}\label{sign4}
  (A\ox_F F_P, \s\ox\id) \cong (M_{n_P}(D_P)\x M_{n_P}(D_P)^\op, \swap),
\end{equation}
cf. \cite[Proposition~2.14]{BOI}. Since (the nonsingular part of) $h$ is
zero in the Witt group (cf.  \cite[Lemma~2.1(iv)]{A-U-kneb}),  
we will define the signature of $h$ at $P$
  to be zero in this case, cf. \eqref{sign6} below.

(2) If $\s$ is of the second kind and $d<_P0$, or if $\s$ is of the first kind,
 then by the Skolem-Noether theorem we obtain an isomorphism of
$F_P$-algebras with involution
\begin{equation}\label{sign3}
  (A\ox_F F_P, \s\ox\id) \cong (M_{n_P}(D_P), \Int(\Phi_P)\circ {\vt_P}^t ),
\end{equation}
where $\Phi_P \in \Sym_{\ve} (M_{n_P}(D_P), {\vt_P}^t)$ is invertible and
$\ve=1$ if $\s$ and $\vt_P$ are of the same type and $\ve=-1$ otherwise, cf.
\cite[Propositions 2.7 and 2.18]{BOI}.

The $F_P$-algebra with involution $(M_{n_P}(D_P), \Int(\Phi_P)\circ {\vt_P}^t )$
is hermitian Morita equivalent to $(M_{n_P}(D_P), {\vt_P}^t )$ (via scaling by
$\Phi_P^{-1}$), which in turn is hermitian Morita equivalent to $(D_P, \vt_P)$,
cf. \cite[Section~2.4]{A-U-kneb}.
We denote the composition of these equivalences, and its induced
map on hermitian forms, by $\fm_P$.

\begin{remark}\label{BS}
  Observe that if $\s$ is orthogonal and $D_P=(-1,-1)_{F_P}$ or if
  $\s$ is symplectic and $D_P=F_P$, then $\ve = -1$ and $\fm_P(h \ox_F
  F_P)$ is skew-hermitian over $(D_P, \vt_P)$. Therefore, in accordance with 
  Remark~2.1, we will define the signature of $h$ at $P$
  to be zero in this case, cf. \eqref{sign6} below.
\end{remark}

\begin{defi}[See also \eqref{sign7} below]
  We say that $P$ is a \emph{nil-ordering of $(A,\s)$} if \eqref{sign4} 
  holds or if one of the cases described in
  Remark~\ref{BS} occurs. We denote the set of nil-orderings of $(A,\s)$ by
  $\Nil[A,\s]$, where the square brackets indicate that this set depends only on
  the Brauer class of $A$ and the type of $\s$.
\end{defi}

Assume now that $P\in X_F\sm \Nil[A,\s]$.  As already mentioned, the idea is to
define the signature of $h$ at $P$
as $\sign_\tP \fm_P(h \ox_F F_P)$ via \eqref{sign1}, where
$\tP$ denotes the unique ordering on $F_P$. There is however a problem: while a
different choice of real closure does not affect this definition (cf.
\cite[Proposition~3.3]{A-U-kneb}) there is no canonical choice of Morita
equivalence, and different choices can result in sign changes (cf.
\cite[Proposition~3.4]{A-U-kneb}). This problem can be addressed as follows: we
showed
in \cite[Theorem~6.4]{A-U-kneb} and
\cite[Sections~2 and 3]{A-U-prime} that there
exists a hermitian form $\mu$ over $(A,\s)$, called a reference form
for $(A,\s)$, with the property that the
signature of the hermitian form $\fm_Q (\mu\ox F_Q)$ 
over $(D_Q,\vt_Q)$
is nonzero at all  $Q\in X_F\sm \Nil[A,\s]$.
Let $s_P\in\{-1,1\}$ denote the sign of 
$\sign_\tP \fm_P (\mu\ox F_P)$. 
The $\mu$-signature of $h$ at $P$ is then defined as
\begin{equation}\label{sign5}
  \sign^\mu_P h:= s_P\cdot \sign_\tP \fm_P(h\ox F_P).
\end{equation}
This definition  ensures
that the use of different Morita equivalences does not change the 
result, cf. \cite[Lemma~3.8]{A-U-kneb}.
The choice of a different reference
form may result in $\sign^\mu_P h$ changing sign continuously
at all $P\in X_F\sm \Nil[A,\s]$, cf. \cite[Proposition~3.3(iii)]{A-U-prime}.
\medskip

Finally, if $P\in \Nil[A,\s]$ 
we define
\begin{equation}\label{sign6}
  \sign^\mu_P h:= 0.
\end{equation}
Note that by \cite[Theorem~6.4]{A-U-kneb} we actually have
\begin{equation}\label{sign7}
  P\in \Nil[A,\s] \Leftrightarrow \sign^\mu_P=0.
\end{equation}

\begin{remark}\label{blobby}
  Observe that if $P\in X_F\sm \Nil[A,\s]$, then there exists 
  $a\in\Sym(A,\s)\cap A^\x$ such that $\sign^\mu_P \qf{a}_\s\not=0$.
  Indeed, if $h$  is such that $\sign^\mu_P h \not=0$, this follows from
  ``weak diagonalization'' (cf.  \cite[Lemma~2.2]{A-U-pos}) and the fact
  that $\sign^\mu_P$ is additive.  
\end{remark}

\begin{remark}
  The definition of $\mu$-signature implies that
  \[
    \sign_P^\mu h =\sign_\tP^{\mu\ox F_P} (h\ox F_P).
  \]
  Furthermore, if $(A,\s)=(D_F,\vt_F)$ (with notation as in 
  Section~\ref{secqq}), then $\mu:=\qf{1}_\s$ is a reference form for $(A,\s)$
  (since $\sign_P \qf{1}_\s=1$ for all $P\in X_F\sm \Nil[A,\s]$) and
  \[
    \sign_P^\mu = \sign_P.
  \]
  
\end{remark}

\noindent\textbf{Assumption for the remainder of the paper:}
 $(A,\s)$ is an $F$-algebra with 
involution and $\mu$ is a reference form for $(A,\s)$.

\subsection{Signatures under ordered field embeddings}

We recall the following consequence of \cite[Lemma~4.1]{A-U-prime}:
\begin{lemma}\label{trivial1}
  Let $F_P \subseteq L$ be a field extension with $L$ real closed. We denote by
  $\fm$ the hermitian Morita equivalence between $(A \ox_F F_P,\s \ox \id)$ and 
  $(D_P, \vt_P)$ as well as the induced isomorphism of Witt groups. Then $\fm$
  extends to a hermitian Morita equivalence $\fm'$ between $(A\ox_F F_P
  \ox_{F_P} L, \s \ox \id \ox \id) = (A\ox_F L, \s \ox \id)$ and $(D_P \ox_{F_P}
  L, \vt_P \ox \id)$ such that (denoting  the induced isomorphism of
  Witt groups also by $\fm'$), the following diagram commutes:
  \[\xymatrix{
    W(A \ox_F F_P, \s \ox \id) \ar[r]^\fm \ar[d] & W(D_P, \vt_P) \ar[d] \\
    W(A \ox_F F_P \ox_{F_P} L, \s \ox \id \ox \id) \ar[r]^-{\fm'} & W(D_P
    \ox_{F_P} L, \vt_P \ox \id)
  }\]
\end{lemma}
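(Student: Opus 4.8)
The plan is to make the hermitian Morita equivalence $\fm = \fm_P$ explicit as the composition of the two elementary equivalences from which it was built in the discussion after \eqref{sign3}, to observe that each of these commutes with scalar extension along $F_P \subseteq L$, and then to read off both $\fm'$ and the commutativity of the square.

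Recall that $\fm$ is the composite of: first, scaling by $\Phi_P^{-1}$, an isometry‑preserving bijection
\[
  W(M_{n_P}(D_P), \Int(\Phi_P) \circ \vt_P^t) \to W(M_{n_P}(D_P), \vt_P^t),
\]
and, second, the standard Morita equivalence $W(M_{n_P}(D_P), \vt_P^t) \to W(D_P, \vt_P)$ realised by the Morita bimodule $V := D_P^{\,n_P}$ with its usual bimodule and sesquilinear structure. Now extend scalars. The canonical identifications
\[
  (A \ox_F F_P) \ox_{F_P} L = A \ox_F L, \qquad M_{n_P}(D_P) \ox_{F_P} L = M_{n_P}(D_P \ox_{F_P} L)
\]
turn \eqref{sign3} into its evident $L$‑analogue, with $\Phi_P$ replaced by $\Phi_P \ox 1$ (still symmetric, resp. skew, of the same type). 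Scaling by $(\Phi_P \ox 1)^{-1}$ gives the first step of $\fm'$, and the base‑changed bimodule $V \ox_{F_P} L = (D_P \ox_{F_P} L)^{\,n_P}$, with its base‑changed sesquilinear structure, realises the standard Morita equivalence between $(M_{n_P}(D_P \ox_{F_P} L), (\vt_P \ox \id)^t)$ and $(D_P \ox_{F_P} L, \vt_P \ox \id)$; composing these two steps defines $\fm'$, which is a hermitian Morita equivalence between $(A \ox_F L, \s \ox \id)$ and $(D_P \ox_{F_P} L, \vt_P \ox \id)$ since scalar extension preserves the defining data (invertible bimodule, nonsingular sesquilinear pairing, compatibility with the involutions); this is exactly the content of \cite[Lemma~4.1]{A-U-prime}.

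For the commutativity of the diagram one must show that for every nonsingular hermitian form $h$ over $(A \ox_F F_P, \s \ox \id)$ the forms $\fm(h) \ox_{F_P} L$ and $\fm'(h \ox_{F_P} L)$ are isometric over $(D_P \ox_{F_P} L, \vt_P \ox \id)$. Since both $\fm$ and $\fm'$ are composites of "scale by an invertible symmetric/skew element'' and "transfer along $V$, resp. $V \ox_{F_P} L$'', this reduces to two standard compatibilities: scalar extension $-\ox_{F_P} L$ commutes with scaling, and it commutes with the transfer, the natural isomorphism
\[
  (N \ox_{M_{n_P}(D_P)} V) \ox_{F_P} L \;\cong\; (N \ox_{F_P} L) \ox_{M_{n_P}(D_P \ox_{F_P} L)} (V \ox_{F_P} L)
\]
being an isometry for the base‑changed hermitian structures. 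The only real obstacle is the bookkeeping needed to confirm that the sesquilinear structure on $V \ox_{F_P} L$ produced by base change is precisely the one defining the standard Morita equivalence over $L$, so that these natural isomorphisms are genuine isometries; once that is checked the square commutes on the nose. As noted, the entire assertion is also an immediate instance of \cite[Lemma~4.1]{A-U-prime} applied to the chain of equivalences constituting $\fm_P$.
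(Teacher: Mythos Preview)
Your proposal is correct and aligns with the paper's treatment: the paper does not give an independent proof but simply records the lemma as a consequence of \cite[Lemma~4.1]{A-U-prime}, which is exactly the reference you invoke. You go further by unpacking $\fm$ into its two constituent equivalences (scaling by $\Phi_P^{-1}$ and the standard bimodule transfer) and verifying that each commutes with $-\ox_{F_P} L$; this is precisely the content of that lemma, so your argument is a correct elaboration of what the paper leaves implicit.
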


\begin{lemma}\label{trivial2}
  Let $(D,\vt) \in \{(F, \id), (F(\sqrt{-1}), -), ((-1,-1)_F,-)\}$ with $F$ real
  closed. Let $(F,P) \subseteq (L,Q)$ be an extension of ordered fields
  with $L$ real 
  closed, and let
  $h$ be a nonsingular  hermitian form over $(D,\vt)$. Then 
  \[
   \sign_P h = \sign_Q (h \ox L).
  \] 
\end{lemma}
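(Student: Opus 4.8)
The statement is essentially a compatibility of Sylvester signatures under an inclusion of real closed fields, transported through the (quadratic-form) reduction described in \eqref{sign0}. So the natural strategy is: first diagonalize $h$ over $(D,\vt)$, reduce $\sign_P h$ and $\sign_Q(h\otimes L)$ to Sylvester signatures of the associated trace (quadratic) forms $b_h$ and $b_{h\otimes L}$ via \eqref{sign1}, observe that $b_{h\otimes L} = b_h\otimes L$, and then invoke the classical fact that for an extension of ordered fields $(F,P)\subseteq(L,Q)$ the Sylvester signature of a quadratic form is unchanged under scalar extension (this is just the statement that a form diagonalized over $F$ stays diagonalized over $L$ with the same diagonal entries, and that the sign of an element of $F$ at $P$ agrees with its sign at $Q$ since $Q$ extends $P$).

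\textbf{Steps in order.} First I would fix a diagonalization $h\simeq\qf{a_1,\dots,a_n}_{\vt}$ with $a_i\in\Sym(D,\vt)=F$. Then $h\otimes L\simeq\qf{a_1,\dots,a_n}_{\vt\otimes\id}$ over $(D\otimes_F L,\vt\otimes\id)$, and by \eqref{sign0} applied over both $F$ and $L$ we get $b_{h\otimes L}\simeq b_h\otimes L$ (the scalars $d$ or $a,b$ defining $D$ are the same, only extended to $L$). Second, I would split into the four cases of \eqref{sign1}. In the case $D=F$, $\sign_P h=\sign_P b_h$ and $\sign_Q(h\otimes L)=\sign_Q(b_h\otimes L)$, so the claim is the classical invariance of Sylvester signature under ordered field extension. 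In the case $D=F(\sqrt{-1})$, note $-1<_P 0$ forces $d=-1<_Q 0$ as well (since $Q$ extends $P$), so the same formula $\tfrac12\sign b_\bullet$ applies on both sides and we again reduce to the quadratic case; similarly for $D=(-1,-1)_F$ with the factor $\tfrac14$. In the remaining cases of \eqref{sign1} both signatures are $0$ by definition. Third, I would dispatch the quadratic-form fact: a Sylvester signature over a real closed field is computed by diagonalizing and counting signs of the diagonal entries, and for $r\in F^\times$ one has $r>_P 0 \iff r>_Q 0$ because $Q\cap F = P$; hence $\sign_P q=\sign_Q(q\otimes L)$ for every nonsingular quadratic form $q$ over $F$.

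\textbf{Main obstacle.} There is no deep obstacle here — the lemma is a bookkeeping statement — but the one point requiring a little care is making sure the case division in \eqref{sign1} is \emph{stable} under passing from $(F,P)$ to $(L,Q)$: one must check that ``$d<_P 0$'', ``$a,b<_P 0$'', and the complementary conditions all transfer verbatim to $Q$, which they do precisely because $Q$ restricts to $P$ on $F$ and the defining elements $d$ (resp. $a,b$) lie in $F$. A secondary bit of care is the identification $b_{h\otimes L}\simeq b_h\otimes L$, i.e. that forming the trace form $x\mapsto h(x,x)$ commutes with scalar extension; this is immediate from the explicit formulas in \eqref{sign0} once $h$ is diagonalized. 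Once these two observations are in place the proof is a one-line appeal to the classical result. (I would also note, as a remark if needed, that Lemma~\ref{trivial1} is the natural tool if one prefers to argue via Morita equivalence rather than via $b_h$, but for $(D,\vt)$ itself the direct trace-form argument is cleanest.)
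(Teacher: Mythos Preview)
Your argument is correct. The paper's own proof is shorter and more direct: since $F$ is real closed, every nonzero element of $\Sym(D,\vt)=F$ is $\pm$ a square, so one may normalize the diagonal entries to $\pm 1$, writing $h \simeq r\times\qf{1}_\vt \perp s\times\qf{-1}_\vt$; then $\sign_P h = r-s$ on the nose, and $h\otimes L$ has exactly the same shape over $(D\otimes_F L,\vt\otimes\id)$, giving $\sign_Q(h\otimes L)=r-s$ as well. Your route via the trace form $b_h$ and the case split of \eqref{sign1} also works and has the mild virtue of making explicit why the definition \eqref{sign1} is stable under ordered extension; but the $\pm 1$ normalization sidesteps both the detour through $b_h$ and the case analysis (which is in any event vacuous here, since for the three algebras in the statement the defining constants are all $-1$, so the ``remaining cases'' of \eqref{sign1} never occur).
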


\begin{proof}
  Since $D$ is a division algebra, $h$ can be diagonalized with entries from
  $\Sym(D,\vt)=F$. Since $h$ is nonsingular and
  $F$ is real closed, we  have
  $h \simeq r \x \qf{1}_\vt \perp s \x \qf{-1}_\vt$, and so $\sign_P h =
  r-s$. Then $h \ox L \simeq r \x \qf{1}_{\vt\ox\id} \perp s \x 
  \qf{-1}_{\vt\ox\id}$, so that
  $\sign_Q (h \ox L) = r-s$.
\end{proof}

\begin{lemma}\label{trivial3}
  Let $P \in X_F$ and let  $\lambda : (F_P, \tP)\to (L,Q)$ be an embedding
  of ordered fields with $(L, Q)$ real closed.
 Let $h$ be a
  nonsingular hermitian form over $(A,\s)$. Then
  \[
   \sign_P^\mu h = \sign_Q^{\mu\ox_\lambda L} (h \ox_\lambda L).
  \]
\end{lemma}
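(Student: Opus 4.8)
The plan is to unwind both sides of the claimed equality through the definition of $\mu$-signature in \eqref{sign5} and to reduce everything to the already-established Lemmas~\ref{trivial1} and \ref{trivial2}. First I would dispose of the trivial case: if $P\in\Nil[A,\s]$, then by \eqref{sign7} we have $\sign_P^\mu h=0$; on the other hand $\Nil[A,\s]$ depends only on the Brauer class of $A$ and the type of $\s$, and these are unchanged by the base change along $\lambda$ to the real closed field $L$ (the relevant splitting behaviour of $Z(A)$ and the type of the involution are preserved), so $Q\in\Nil[A\ox_\lambda L,\s\ox\id]$ and the right-hand side is $0$ as well. So assume $P\in X_F\sm\Nil[A,\s]$.

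In this case, write $F_P$ for a real closure of $F$ at $P$, and note that $\lambda$ factors as $F_P\hookrightarrow L$ with $L$ real closed, so Lemma~\ref{trivial1} applies with this embedding. By definition, $\sign_P^\mu h = s_P\cdot\sign_{\tP}\fm_P(h\ox F_P)$ where $s_P$ is the sign of $\sign_{\tP}\fm_P(\mu\ox F_P)$, and $\fm_P$ is the Morita equivalence $W(A\ox_F F_P,\s\ox\id)\to W(D_P,\vt_P)$. Lemma~\ref{trivial1} gives an extension $\fm'$ of $\fm_P$ to a Morita equivalence between $W(A\ox_F L,\s\ox\id)$ and $W(D_P\ox_{F_P}L,\vt_P\ox\id)$ making the base-change square commute; the latter algebra with involution is precisely the one (call it $(D_Q,\vt_Q)$) attached to $L$ at $Q$, since $D_P\ox_{F_P}L$ is $L$, $L(\sqrt{-1})$, or $(-1,-1)_L$ according to the same three cases. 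Thus the Morita equivalence used to compute $\sign_Q^{\mu\ox_\lambda L}$ may be taken to be $\fm'$, so that $\fm'((h\ox F_P)\ox_{F_P}L) = \fm_P(h\ox F_P)\ox_{F_P}L$ by commutativity of the square, and similarly for $\mu$.

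Now apply Lemma~\ref{trivial2} to the hermitian form $\fm_P(h\ox F_P)$ over $(D_P,\vt_P)$ and the ordered field extension $(F_P,\tP)\subseteq(L,Q)$: it yields $\sign_{\tP}\fm_P(h\ox F_P) = \sign_Q\bigl(\fm_P(h\ox F_P)\ox_{F_P}L\bigr) = \sign_Q\fm'(h\ox_\lambda L)$. The same reasoning applied to $\mu$ shows that the reference sign is preserved: $s_Q$, the sign of $\sign_Q\fm'(\mu\ox_\lambda L) = \sign_Q(\fm_P(\mu\ox F_P)\ox_{F_P}L)$, equals the sign of $\sign_{\tP}\fm_P(\mu\ox F_P) = s_P$ — in particular it is nonzero, confirming that $\mu\ox_\lambda L$ is indeed a reference form for $(A\ox_\lambda L,\s\ox\id)$. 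Multiplying, $\sign_Q^{\mu\ox_\lambda L}(h\ox_\lambda L) = s_Q\cdot\sign_Q\fm'(h\ox_\lambda L) = s_P\cdot\sign_{\tP}\fm_P(h\ox F_P) = \sign_P^\mu h$, as required.

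The only genuine subtlety — the part I would be most careful about — is the identification $(D_P\ox_{F_P}L,\vt_P\ox\id)\cong(D_Q,\vt_Q)$ together with the assertion that $\fm'$ is a legitimate choice of Morita equivalence for computing the $Q$-signature over $L$. This is essentially the content of Lemma~\ref{trivial1} combined with the fact (from \cite[Lemma~3.8]{A-U-kneb}) that the $\mu$-signature is independent of the chosen Morita equivalence, so once $\fm'$ is available the argument goes through; everything else is a routine chase through the definitions.
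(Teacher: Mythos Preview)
Your proof is correct and follows essentially the same route as the paper: reduce to Lemmas~\ref{trivial1} and~\ref{trivial2}, observe that the extended Morita equivalence $\fm'$ carries $\fm_P(h\ox F_P)\ox L$ to $\fm'(h\ox_\lambda L)$, and check that the reference sign $s_P$ coincides with $s_Q$. Two minor differences worth noting: the paper separates the argument into the case where $\lambda$ is a literal inclusion (so that Lemma~\ref{trivial1}, stated for $F_P\subseteq L$, applies verbatim) and then reduces the general embedding to this case via the isomorphism $F_P\cong\lambda(F_P)$, invoking \cite[Theorem~4.2]{A-U-prime} for signature-invariance under isomorphism---you conflate these steps by writing ``$\lambda$ factors as $F_P\hookrightarrow L$''; and conversely you treat the nil case explicitly whereas the paper leaves it implicit.
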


\begin{proof}
  The proof has two parts.
  
  \emph{Part 1:} Assume that $\lambda$ is an inclusion.  
  Observe that by Lemma~\ref{trivial1}, $\fm(h \ox F_P) \ox_{F_P} L = \fm'(h
  \ox_F F_P \ox_{F_P} L) = \fm'(h \ox L)$ in $W(D_P \ox_{F_P} L, \vt_P
  \ox \id) \cong W(D_L, \vt_L)$. 
  Let $s_P$ and $s_Q\in\{-1,+1\}$ denote the sign of $\sign_{\tP} \fm(\mu \ox 
  F_P)$
  and $\sign_{Q} \fm'((\mu \ox F_P) \ox L)$, respectively.
  Observe that $s_P=s_Q$ by Lemma~\ref{trivial1}.  It follows that
  \begin{align*}
    \sign_P^\mu h &= s_P\cdot \sign_{\tP} \fm(h \ox F_P) \text{ by 
        \eqref{sign5}} \\
      &= s_P \cdot \sign_Q  (\fm(h \ox F_P) \ox L) \text{ by 
        Lemma~\ref{trivial2}} \\
      &= s_Q \cdot \sign_Q \fm'( (h \ox F_P)\ox L) \text{ by 
        Lemma~\ref{trivial1}} \\ 
     &= \sign_Q^{\mu\ox L} (h \ox L) \text{ by 
        \eqref{sign5}}.
  \end{align*}
    
  \emph{Part 2:} Returning to the general case of a morphism 
  $\lambda : (F_P, \tP)\to (L,Q)$,
  we have
  \[
   \sign_P^\mu h = \sign_Q^{\mu\ox_\lambda L} (h \ox_\lambda L)
  \]
  since both 
  $h\ox_{\lambda} L$ and $\mu\ox_\lambda L$ are obtained by applying
  the isomorphism
  $\lambda: F_P \to \lambda(F_P)$, which preserves signatures
  by \cite[Theorem~4.2]{A-U-prime}, followed by
  the inclusion $\lambda(F_P)\subseteq L$, which also preserves signatures
  by the argument above.
\end{proof}

\begin{thm}\label{embord}
  Let $h$ be a hermitian form over $(A,\s)$ and let $P \in X_F$. 
  Let $\lambda: (F,P) \to (L,Q)$ be an embedding of ordered fields.
  Then
  \[
    \sign_P^\mu h = \sign_Q^{\mu\ox_\lambda L} (h \ox_\lambda L).
  \]
\end{thm}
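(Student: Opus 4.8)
The plan is to reduce Theorem~\ref{embord} to the already-established Lemma~\ref{trivial3}, whose only limitation is that its target is required to be \emph{real closed}, whereas here $(L,Q)$ is an arbitrary ordered field extension. The standard device is to factor the embedding $\lambda\colon(F,P)\to(L,Q)$ through real closures. First I would fix a real closure $L_Q$ of $L$ at $Q$, with its unique ordering $\widetilde{Q}$, and note that the composite $(F,P)\to(L,Q)\hookrightarrow(L_Q,\widetilde{Q})$ is an embedding of ordered fields into a real closed field; by the universal property of the real closure $F_P$ of $(F,P)$, this composite extends to an embedding of ordered fields $\lambda'\colon(F_P,\widetilde{P})\to(L_Q,\widetilde{Q})$ fitting into a commuting square with the inclusions $F\subseteq F_P$ and $L\subseteq L_Q$.

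Next I would apply Lemma~\ref{trivial3} to $\lambda'$: it gives
\[
  \sign_P^\mu h = \sign_{\widetilde Q}^{\mu\ox_{\lambda'} L_Q}\,(h\ox_{\lambda'} L_Q),
\]
and the point is that, by the commuting square, $h\ox_{\lambda'}L_Q = (h\ox_\lambda L)\ox_L L_Q$ and likewise $\mu\ox_{\lambda'}L_Q = (\mu\ox_\lambda L)\ox_L L_Q$, so the right-hand side equals $\sign_{\widetilde Q}^{(\mu\ox_\lambda L)\ox L_Q}\,\bigl((h\ox_\lambda L)\ox L_Q\bigr)$. It therefore remains to identify this with $\sign_Q^{\mu\ox_\lambda L}(h\ox_\lambda L)$, i.e.\ to show that extending scalars from $(L,Q)$ to its real closure $(L_Q,\widetilde Q)$ does not change the $(\mu\ox_\lambda L)$-signature. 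But this is again an instance of Lemma~\ref{trivial3}, now applied over the base $F$-algebra with involution $(A\ox_F L,\s\ox\id)$ and the inclusion of ordered fields $(L,Q)\hookrightarrow(L_Q,\widetilde Q)$ — here one uses that $\mu\ox_\lambda L$ is a reference form for $(A\ox_F L,\s\ox\id)$, which holds because $\sign_Q^{\mu\ox_\lambda L}$ is, by Lemma~\ref{trivial3} applied to the inclusion $L\hookrightarrow L_Q$, a nonzero multiple of $\sign_{\widetilde Q}^{(\mu\ox_\lambda L)\ox L_Q}$ wherever the latter is nonzero, and reference forms are detected by nonvanishing at real closures. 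Chaining the two equalities yields $\sign_P^\mu h = \sign_Q^{\mu\ox_\lambda L}(h\ox_\lambda L)$.

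I expect the only genuine subtlety to be bookkeeping around reference forms under scalar extension: Lemma~\ref{trivial3} is phrased for a \emph{fixed} reference form $\mu$ of $(A,\s)$ and asserts that $\mu\ox_\lambda L$ plays the role of the reference form on the extension, so one must check that the hypothesis ``$\mu\ox_\lambda L$ is a reference form for $(A\ox_F L,\s\ox\id)$'' is legitimately available before invoking the lemma a second time over $L$. Concretely, a form $\nu$ over an $F$-algebra with involution is a reference form precisely when $\sign_Q^\nu$ (equivalently, after base change to a real closure, the Morita-transported signature) is nonzero at every non-nil ordering; since nil-orderings depend only on the Brauer class and the type of the involution — which are unchanged by scalar extension along an ordered field embedding up to the identification of $X_{L}$-points lying over $P$ — and since $\sign_{\widetilde Q}^{(\mu\ox_\lambda L)\ox L_Q}\neq 0$ at such points by Lemma~\ref{trivial3} applied to $L\hookrightarrow L_Q$ together with the reference-form property of $\mu$ over $F$, the claim follows. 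Once this is in place, the proof is a short diagram chase; no computations beyond tracking scalar extensions are needed.
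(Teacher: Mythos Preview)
Your proof is correct and takes a somewhat simpler route than the paper's. After reducing to the case where $\lambda$ is an inclusion, the paper does \emph{not} embed $F_P$ directly into $L_Q$; instead it invokes an amalgamation result for real closed fields (equivalently, quantifier elimination) to produce a real closed $(N,S)$ together with order embeddings $\lambda_P\colon F_P\to N$ and $\lambda_Q\colon L_Q\to N$ agreeing on $F$, and then applies Lemma~\ref{trivial3} separately to the two legs $F_P\to N$ and $L_Q\to N$. Your use of the universal property of the real closure to obtain $\lambda'\colon F_P\to L_Q$ directly bypasses the amalgamation and needs only one application of Lemma~\ref{trivial3}; this is more elementary and arguably cleaner. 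One small correction: the identification
\[
\sign_Q^{\mu\ox_\lambda L}(h\ox_\lambda L)=\sign_{\widetilde Q}^{(\mu\ox_\lambda L)\ox L_Q}\bigl((h\ox_\lambda L)\ox L_Q\bigr)
\]
is not another instance of Lemma~\ref{trivial3} (that lemma requires the source of the embedding to already be real closed, which $L$ is not), but is simply the definition of the $\mu$-signature applied over the base field $L$; the paper uses exactly this step ``by definition'' too. Your attention to whether $\mu\ox_\lambda L$ functions as a reference form at $Q$ is the right point to verify here, and your justification for it is sound.
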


\begin{proof}
  We may assume that $h$ is nonsingular 
  since otherwise we can write $h\simeq h^\ns\perp h_0$, 
  where $h^\ns$ is nonsingular and $h_0$ is a zero form of appropriate rank,
  cf. \cite[Proposition~A.3]{A-U-PS}, and thus $\sign_P^\mu h = 
  \sign_P^\mu h^\ns$.
    
  \emph{Part 1:} Assume that $\lambda$ is an inclusion.  
  Let $(L_Q, \tQ)$ be a real closure of $(L,Q)$.
  By \cite[Exercise~1.4.3(b)]{S24} 
  there is a real closed field $(N, S)$ and embeddings of ordered fields
  $\lambda_P$ and $\lambda_Q$ such that the following diagram commutes:
  \begin{equation}\label{amal}
    \begin{gathered}
      \xymatrix{
            & (F_P, \tP)\ar[dr]^{\lambda_P} & \\
       (F,P)\ar[ur]\ar[dr]  &            & (N,S)\\
            & (L_Q, \tQ)\ar[ur]_{\lambda_Q} &
      }
    \end{gathered}
  \end{equation}
  (This can also be obtained as a consequence of elimination of quantifiers
  for real closed fields by \cite[Proposition~3.5.19]{CK}.)  
  By definition, 
  \[
    \sign_P^\mu h = \sign_{\tP}^{\mu\ox F_P} (h \ox F_P)
  \] 
  and 
  \[
  \sign_Q^{\mu\ox L} (h \ox L) = \sign_{\tQ}^{(\mu \ox L) \ox L_Q} 
  ((h \ox L) \ox L_Q).
  \] 
  By Lemma~\ref{trivial3} we have 
  \[
  \sign_{\tP}^{\mu\ox F_P} (h \ox F_P) = 
  \sign_S^{(\mu\ox F_P) \ox_{\lambda_P} N} 
  ((h \ox F_P)\ox_{\lambda_P}   N)
  \] 
  and 
  \[
    \sign_{\tQ}^{\mu \ox L \ox L_Q} 
      (h \ox L \ox L_Q) = \sign_S^{(\mu\ox L\ox L_Q) \ox_{\lambda_Q} N}  
      ((h \ox L\ox L_Q ) 
      \ox_{\lambda_Q} N).
  \]  
 The result follows,
  since 
  $(h \ox F_P)\ox_{\lambda_P}   N \cong  (h \ox L\ox L_Q ) \ox_{\lambda_Q} N$
  and
  $(\mu \ox F_P)\ox_{\lambda_P}   N \cong  (\mu \ox L\ox L_Q)  
  \ox_{\lambda_Q} N$
  by commutativity of diagram \eqref{amal}.
  
  \emph{Part 2:} Assume that $\lambda$ is any embedding. We conclude
  as in Part~2 of the proof of Lemma~\ref{trivial3}.  
\end{proof}

\subsection{Positive cones}

Positive cones on algebras with involution were introduced in \cite{A-U-pos}
as an attempt to define a notion of ordering that corresponds to signatures of
hermitian forms and that has good real-algebraic properties.

\begin{defi}[{\cite[Definition~3.1]{A-U-pos}}]\label{def-preordering}
 A \emph{prepositive cone $\CP$ on }$(A,\s)$ is a subset $\CP$ of $\Sym(A,\s)$
  such that
  \begin{enumerate}[label=(P\arabic*)]
    \item $\CP \not = \varnothing$;
    \item $\CP + \CP \subseteq \CP$;
    \item $\s(a) \cdot \CP \cdot a \subseteq \CP$ for every $a \in A$;
    \item $\CP_F := \{u \in F \mid u\CP \subseteq \CP\}$ is an ordering on $F$;
    \item $\CP \cap -\CP = \{0\}$ (we say that $\CP$ is \emph{proper}).
  \end{enumerate}
  A prepositive cone $\CP$ is \emph{over} $P\in X_F$ 
  if $\CP_F=P$, and a \emph{positive cone} is a prepositive cone that is 
  maximal with respect to inclusion.
  We denote the set of all positive cones on $(A,\s)$ by $X_{(A,\s)}$.
\end{defi}

Note that $\CP$ is a (pre)positive cone over $P$ if and only if $-\CP$ is a (pre)positive cone over $P$.

\begin{ex}
  The simplest non-trivial example of a positive cone is given by the set of
  positive semidefinite matrices in any of the following central simple
  algebras with involution: \[(M_n(\R), t),\ (M_n(\R(\sqrt{-1})), -^t),\
  (M_n((-1,-1)_{\R}), -^t)\] (see \cite[Example~3.11 and Remark~4.11]{A-U-pos}
  for the case of $(M_n(\R), t)$; the exact same argument works for the other
  two cases, using the principal axis theorem, which also holds for matrices
  over quaternions by \cite[Corollary~6.2]{Zhang}).
\end{ex}

\begin{defi}
  Let $S \subseteq \Sym(A,\s)$ and let $P \in X_F$. We define
  \[\CC_P(S) := \Bigl\{\sum_{i=1}^k u_i \s(x_i)s_ix_i \Bigmid 
  k \in \N,\ u_i \in P,
  \ x_i \in A,\ s_i \in S \Bigr\},\]
  and for $a\in \Sym(A,\s)$ and $\CP$ a prepositive cone on $(A,\s)$ over $P$,
  \[\CP[a] := \Bigl\{p + \sum_{i=1}^k u_i\s(x_i)ax_i \Bigmid p \in \CP,\ k
  \in \N,\ u_i \in P,\ x_i \in A\Bigr\}.\]
\end{defi}

It is clear that $\CC_P(S)$ and $\CP[a]$ both
satisfy properties (P1) to (P4),
and are prepositive cones over $P$
if and only if they are proper, i.e., satisfy (P5).
\medskip

\begin{defi}
  We define, for $P \in X_F$,
  \[m_P(A,\s) := \max \{\sign^\mu_P \qf{a}_\s \mid a \in \Sym(A,\s)\cap A^\x\}\]
  and, for $P \in X_F \sm \Nil[A,\s]$,
  \[\CM^\mu_P(A,\s) := \{a \in \Sym(A,\s)\cap A^\x \mid \sign^\mu_P \qf{a}_\s =
  m_P(A,\s)\} \cup \{0\}.\]
\end{defi}

Observe that if $P \in X_F \sm \Nil[A,\s]$ then $m_P(A,\s)>0$ and so
$\CM^\mu_P(A,\s)\not=\{0\}$, by Remark~\ref{blobby}.

\begin{prop}\label{lausanne} Let $P \in X_F\sm \Nil[A,\s]$. 
  If $A$ is an $F$-division algebra, then $\CM^\mu_P(A,\s)$ is a
  prepositive cone on $(A,\s)$ over $P$. Otherwise, 
  $\CC_P(\CM^\mu_P(A,\s))$ is a prepositive cone over $P$.  
\end{prop}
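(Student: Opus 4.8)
The plan is to verify properties (P1)--(P5) for $\CM := \CM^\mu_P(A,\s)$ in the division case, and for $\CC_P(\CM)$ in the general case. Properties (P1)--(P4) are either trivial or already noted: (P1) holds by the remark preceding the proposition (since $m_P(A,\s)>0$ so $\CM\ne\{0\}$); for $\CC_P(\CM)$, (P1)--(P4) were observed to hold automatically after the definition of $\CC_P(S)$. The real content is (P5), properness. Additivity and homogeneity of the signature $\sign^\mu_P$ under scaling and congruence will be the main tools throughout; in particular for any $x\in A^\x$ and $a\in\Sym(A,\s)\cap A^\x$ we have $\sign^\mu_P\qf{\s(x)ax}_\s = \sign^\mu_P\qf{a}_\s$ (transfer/Morita invariance), and for $u\in P$, $\sign^\mu_P\qf{ua}_\s=\sign^\mu_P\qf{a}_\s$ since $u$ is a positive square in $F_P$.

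First I would handle the \textbf{division algebra case}. Here $\Sym(A,\s)\cap A^\x = \Sym(A,\s)\sm\{0\}$, so $m_P(A,\s)$ is literally the maximal value of $\sign^\mu_P\qf{a}_\s$ over all nonzero symmetric elements. For (P2): if $a,b\in\CM\sm\{0\}$, then $\qf{a}_\s\perp\qf{b}_\s$ has signature $2m_P(A,\s)$; by weak diagonalization (Remark~\ref{blobby}'s cited Lemma~2.2 of \cite{A-U-pos}) this form is isometric to a diagonal form $\qf{c_1,\dots,c_r}_\s$, and since each $\sign^\mu_P\qf{c_i}_\s \le m_P(A,\s)$ while the total is $2m_P(A,\s)$ with $r=2$ (rank is preserved), we must have... actually $r$ need not be $2$ in general; instead I would argue via the representation: $a+b$ is represented by $\qf{a}_\s\perp\qf{b}_\s$, and I'd show any nonzero element represented by a form of signature $2m_P(A,\s)$ and rank $2$ must itself have signature $m_P(A,\s)$. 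More robustly: $\sign^\mu_P\qf{a+b}_\s \le m_P$ always, and $\qf{a,b}_\s \simeq \qf{a+b, (a+b)^{-1}\text{-stuff}}_\s$ — here I would invoke the standard fact that if $h\simeq\qf{c}_\s\perp h'$ represents the rank-$2$ form $\qf{a,b}_\s$ then $\sign^\mu_P\qf{a+b}_\s + \sign^\mu_P h' = 2m_P$ with $\sign^\mu_P h'\le m_P$, forcing $\sign^\mu_P\qf{a+b}_\s\ge m_P$, hence $=m_P$, so $a+b\in\CM$. (P3): for $x\in A^\x$, $\sign^\mu_P\qf{\s(x)ax}_\s=\sign^\mu_P\qf{a}_\s=m_P$; for $x=0$ the product is $0\in\CM$. (P4): need $\CM_F = P$. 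One inclusion: $u\in P \Rightarrow u\CM\subseteq\CM$ by the scaling invariance above; conversely if $u\in\CM_F$ pick any $a\in\CM\sm\{0\}$, then $ua\in\CM$ so $\sign^\mu_P\qf{ua}_\s = m_P > 0$; but $\sign^\mu_P\qf{ua}_\s = \sign_P(u)\cdot m_P$, forcing $\sign_P(u)=1$, i.e. $u\in P$. (Here I'm using that $\sign^\mu_P$ is $W(F)$-linear so multiplying by the $1$-dimensional quadratic form $\qf{u}$ multiplies the signature by $\sign_P u$.) Finally (P5): if $a\in\CM\cap-\CM$ and $a\ne0$, then $\sign^\mu_P\qf{a}_\s = m_P$ and $\sign^\mu_P\qf{-a}_\s = -m_P$, but also $-a\in\CM$ gives $\sign^\mu_P\qf{-a}_\s=m_P$, so $m_P=-m_P$, i.e. $m_P=0$, contradicting $m_P>0$. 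Hence $\CM\cap-\CM=\{0\}$.

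For the \textbf{general (non-division) case}, $\CC_P(\CM)$ already satisfies (P1)--(P4) as noted, so only (P5) remains. A typical element is $\sum_i u_i\s(x_i)s_i x_i$ with $u_i\in P$, $x_i\in A$, $s_i\in\CM$. Every summand $u_i\s(x_i)s_ix_i$ lies in $D_{(A,\s)}\bigl(\qf{s_i}_\s\bigr)$ up to the $P$-scaling, and $\sign^\mu_P\qf{s_i}_\s = m_P$. The key point is that a sum of elements each represented by forms of ``maximal signature density'' is again represented by a form all of whose diagonal entries — if nonzero — have signature $m_P$; equivalently, I would show that the Witt class $\qf{u_1\s(x_1)s_1x_1,\dots,u_k\s(x_k)s_kx_k}_\s$ has signature equal to $m_P$ times its rank (after discarding singular entries), because each $\qf{u_i\s(x_i)s_ix_i}_\s$ has signature $m_P$ (when the entry is nonzero) and signatures are additive. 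Then if $b\in\CC_P(\CM)\cap-\CC_P(\CM)$ with $b\ne0$, we have $b$ represented by a form of signature $m_P\cdot(\text{rank})\ge m_P>0$ and $-b$ likewise, so $\sign^\mu_P\qf{b}_\s = m_P$ and $\sign^\mu_P\qf{-b}_\s = -m_P$ contradict $-b\in\CC_P(\CM)\Rightarrow\sign^\mu_P\qf{-b}_\s=m_P>0$. Thus (P5) holds.

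The \textbf{main obstacle} is the properness argument (P5), specifically controlling the signature of an element \emph{represented} by a sum of forms of maximal signature: one needs that such an element, if nonzero, \emph{necessarily} achieves signature $m_P$ (not just $\le m_P$). I expect the cleanest route is the subform/representation lemma: if a nonzero $c\in\Sym(A,\s)\cap A^\x$ is represented by a hermitian form $h$ with $\sign^\mu_P h = m_P\cdot\rk(h)$, then $h\simeq\qf{c}_\s\perp h'$ with $\sign^\mu_P h' = m_P(\rk(h)-1)\le m_P\rk(h')$, and since also $\sign^\mu_P\qf{c}_\s\le m_P$ while the two add to $m_P\rk(h)$, equality $\sign^\mu_P\qf{c}_\s = m_P$ is forced. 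This uses weak diagonalization, additivity of $\sign^\mu_P$, and the maximality defining $m_P$; I would isolate it as a short preliminary observation and then both cases follow uniformly.
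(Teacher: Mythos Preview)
Your division-algebra argument is fine; the paper simply cites it from \cite[Example~3.13]{A-U-pos}, and what you wrote is essentially a reconstruction of that example.

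The genuine gap is in the non-division case. Your argument hinges on the claim that a nonzero $b\in\CC_P(\CM)\cap -\CC_P(\CM)$ satisfies $\sign^\mu_P\qf{b}_\s=m_P$, obtained via the subform lemma ``$h\simeq\qf{c}_\s\perp h'$ when $c\in A^\times$ is represented by $h$''. But in a non-division algebra there is no reason for $b$ to be invertible: the summands $u_i\s(x_i)s_ix_i$ with $x_i$ non-invertible are typically nonzero non-units, and so is their sum. For such $b$ the form $\qf{b}_\s$ is degenerate, the Witt-cancellation step $h\simeq\qf{b}_\s\perp h'$ is unavailable, and the quantity $\sign^\mu_P\qf{b}_\s$ is not the invariant you want (its nonsingular part may well be zero). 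The same problem already hits your intermediate claim that each $\qf{u_i\s(x_i)s_ix_i}_\s$ has signature $m_P$: this is only valid when the entry is invertible.

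The paper avoids this obstacle by never computing $\sign^\mu_P$ of a possibly degenerate one-dimensional form. Instead it observes that if $\CC_P(\CM)$ were not proper then it would be all of $\Sym(A,\s)$ (by \cite[Proposition~3.5]{A-U-pos}), so in particular it would contain the \emph{invertible} elements $1$ and $-1$; from the resulting representations $1\in D_{(A,\s)}\qf{a_1,\dots,a_r}_\s$ and $-1\in D_{(A,\s)}\qf{b_1,\dots,b_s}_\s$ with $a_i,b_j\in\CM$ one splits off $\qf{1}_\s$ and $\qf{-1}_\s$, applies weak diagonalization, and gets a signature-count contradiction. Your subform idea is the right engine, but it must be applied to an element you know is a unit; passing through ``not proper $\Rightarrow$ everything'' is what supplies such an element.
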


\begin{proof}
  The first statement is \cite[Example~3.13]{A-U-pos}. For the second statement,
  it suffices to check that $\CC_P(\CM^\mu_P(A,\s))$ is proper, since properties
  (P1) to (P4) are clear.  Assume that this is not the case. Then
  $\CC_P(\CM^\mu_P(A,\s)) = \Sym(A,\s)$ by \cite[Proposition 3.5]{A-U-pos}. In
  particular there are elements $a_1,\ldots, a_r$, $b_1,\ldots, b_s\in
  \CM^\mu_P(A,\s)\smz$ such that $1 \in D_{(A,\s)} \qf{a_1,\ldots, a_r}_\s$ and
  $-1 \in D_{(A,\s)} \qf{b_1,\ldots, b_s}_\s$. Since both $1$ and $-1$ are
  invertible, a standard argument shows that $\qf{1}_\s \perp \vf \simeq
  \qf{a_1,\ldots, a_r}_\s$ and $\qf{-1}_\s \perp \psi \simeq \qf{b_1,\ldots,
  b_s}_\s$ for some nonsingular hermitian forms $\vf$ and $\psi$ over $(A,\s)$.
  Therefore, $\qf{1,-1}_\s \perp \vf \perp \psi \simeq \qf{a_1,\ldots, a_r,
  b_1,\ldots, b_s}_\s$. By ``weak diagonalization'', 
  cf.  \cite[Lemma~2.2]{A-U-pos},
  we have
  \[\ell \x \qf{1,-1}_\s \perp \qf{c_1, \ldots, c_k}_\s \simeq \ell \x 
  \qf{a_1,\ldots, a_r, b_1,\ldots, b_s}_\s\]
  for some $\ell\in \N$,
  $c_1,\ldots, c_k \in \Sym(A,\s)\cap A^\x$, and $2\ell +k = \ell(r+s)$.
  Comparing signatures at $P$, we obtain that the right-hand side has signature
  $\ell (r+s) \cdot m_P(A,\s)$ (with $m_P(A,\s) > 0$ since 
  $P \not \in \Nil[A,\s]$), which is the
  maximal value that can be obtained by the signature of a diagonal form of
  dimension $\ell(r+s)$. But the left-hand side can only have signature at most
  $k \cdot m_P(A,\s)$, which is smaller than $\ell (r+s) \cdot m_P(A,\s)$, 
  contradiction.
\end{proof}

\subsection{Reduction to diagonal forms}

We recall from \cite[Section~3.4]{A-U-Az-PLG} 
that there exists a pairing $*$ of hermitian forms over $(A,\s)$ 
(first studied in detail by N.~Garrel in 
\cite{garrel-2023})
such
that $\vf * \psi$ is a hermitian form over $(Z(A),\iota)$, where 
$\iota:=\s|_{Z(A)}$, and which preserves orthogonal sums, isometries and
nonsingularity, cf. \cite[Corollary~3.8]{A-U-Az-PLG}. Furthermore, $*$ 
satisfies the following  ``pivot property''
\begin{equation}\label{pivot}
  (\vf * \psi) \ox_{Z(A)} \chi \simeq  (\chi * \psi ) \ox_{Z(A)} \vf,
\end{equation}
cf. \cite[Theorem~3.9]{A-U-Az-PLG}. We also note that if 
$a,b \in \Sym(A,\s)\cap A^\x$, then by \cite[Proposition~4.9]{garrel-2023} or 
\cite[Lemma~3.11]{A-U-Az-PLG} we have
\[
  \qf{a}_\s * \qf{b}_\s \simeq \vf_{a,b,\s},
\]
where $\vf_{a,b,\s}(x,y):=\Trd_A(\s(x)ayb)$.
\medskip

Observe that by \cite[Lemma~3.6]{A-U-pos} there exists an invertible element
$a$ in $\CP$.

\begin{prop}\label{anonzero}
  Let $\CP$ be a positive cone on $(A,\s)$ over $P \in X_F$ and let 
  $a\in \CP\cap A^\x$. Then $\sign_P^\mu \qf{a}_\s\not=0$ and
  $\sign_P (\qf{a}_\s * \qf{a}_\s)\not=0$.
\end{prop}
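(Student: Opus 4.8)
The plan is to reduce everything to the classical theory of signatures of quadratic forms over a real closed field via scalar extension to $F_P$, and to exploit the maximality of $\CP$ together with the "doubling" identity $\CP[-a]$. First I would observe that since $\CP$ is a positive cone and $a \in \CP \cap A^\x$, the element $-a$ cannot lie in $\CP$ by properness (P5); by maximality, $\CP[-a]$ fails to be proper, i.e. there is a relation $0 = p + \sum_i u_i \s(x_i)(-a)x_i$ with $p \in \CP$, $u_i \in P$, $x_i \in A$, equivalently $\sum_i u_i \s(x_i) a x_i = p \in \CP$ together with $-\sum_i u_i\s(x_i)ax_i \in \CP$, forcing (again by (P5)) $p = 0$ — but one needs to be a little more careful, as the right bookkeeping is that $\CP[-a]$ not proper means $\CP \cap -\CP[-a] \ne \{0\}$ in a suitable sense, or more simply that some nonzero element of $\Sym(A,\s)$ lies in both $\CC_P(\{a\})$-type cones. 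The clean statement I want is: there exist $u_i \in P$ and $x_i \in A$, not all trivial, with $\sum_i u_i \s(x_i) a x_i$ represented in a way that contradicts $\sign_P^\mu \qf{a}_\s = 0$.

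A more robust route, which I would actually take, is the following. Suppose for contradiction that $\sign_P^\mu \qf{a}_\s = 0$. Pick $b \in \CM^\mu_P(A,\s) \cap A^\x$ with $\sign_P^\mu \qf{b}_\s = m_P(A,\s) > 0$ (possible since $P \notin \Nil[A,\s]$, using Remark~\ref{blobby} and the fact that $\CP \cap A^\x \ne \varnothing$ forces $P \notin \Nil[A,\s]$ via Proposition~\ref{anonzero}'s own hypothesis — actually I should first check $P \notin \Nil[A,\s]$, which follows because a positive cone over a nil-ordering would have to be contained in $\CM^\mu_P = \{0\}$-type obstructions; this needs a short argument from \cite{A-U-pos} or can be taken as known). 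The key computation is then to extend scalars to $F_P$ and transport via $\fm_P$ to a diagonal hermitian form over $(D_P,\vt_P)$, where signatures are literally Sylvester signatures (up to the factor $\tfrac12$ or $\tfrac14$ and the sign $s_P$) by \eqref{sign1} and \eqref{sign5}. Over the real closed field $F_P$ and the division algebra $D_P$, a hermitian form $\qf{a}_\s \ox F_P$ with zero signature means $\fm_P(\qf a_\s \ox F_P)$ is a hyperbolic-plus-anisotropic-of-signature-zero form, hence Witt-trivial, hence $a \ox 1$ is a "sum of hermitian squares times negatives of hermitian squares" in a controlled way; feeding this back through the definition of $\CC_P$ contradicts properness of $\CP$ (here one uses that $\CP \ox F_P$ controls the positive cone structure, via the results linking positive cones to signatures in \cite{A-U-pos}).

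For the second assertion, I would use the explicit identification $\qf a_\s * \qf a_\s \simeq \vf_{a,a,\s}$ with $\vf_{a,a,\s}(x,y) = \Trd_A(\s(x)\,a\,y\,a)$, which is a form over $(Z(A),\iota)$ — i.e. either over $F$ (split case, $\iota = \id$) or over a quadratic extension. Then I would invoke the compatibility of the pairing $*$ with signatures: there should be a formula of the shape $\sign_P(\vf * \psi) = \sign_P^\mu \vf \cdot \sign_P^\mu \psi$ (up to the normalizing constant $m_P(A,\s)$, or exactly so after suitable normalization) coming from \cite{garrel-2023} / \cite{A-U-Az-PLG}; granting such a multiplicativity, $\sign_P(\qf a_\s * \qf a_\s) = (\sign_P^\mu \qf a_\s)^2 \cdot (\text{nonzero constant}) \ne 0$ follows immediately from the first part. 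If no such clean multiplicativity is available in the cited references, the fallback is to extend scalars to $F_P$ once more: $\fm_P$ turns $\qf a_\s \ox F_P$ into $\langle c_1,\dots,c_n\rangle_{\vt_P}$ over $(D_P,\vt_P)$ with $\sum \sign_{\tP}(c_i) \ne 0$, and then compute $\vf_{a,a,\s} \ox F_P$ directly as a reduced-trace form of a matrix algebra, which diagonalizes over $F_P$ with a nonzero Sylvester signature because it is, up to scaling, the "square" $\langle c_1,\dots,c_n\rangle_{\vt_P} * \langle c_1,\dots,c_n\rangle_{\vt_P}$ whose signature is $(\sum \sign c_i)^2 > 0$ by the split-case multiplicativity of $*$, which is elementary.

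The main obstacle I anticipate is the first assertion — specifically, making the passage "$-a \notin \CP$ and $\CP$ maximal $\Rightarrow$ a nontrivial representation relation" interact correctly with the signature computation, since properness of $\CP$ is a statement about $\CP \cap -\CP$ whereas $\sign_P^\mu \qf a_\s = 0$ is a Witt-theoretic statement over $F_P$; bridging these requires the dictionary between positive cones and signatures established in \cite{A-U-pos} (roughly, that $\CP$ "detects" $\sign_P^\mu$), and I would need to quote the precise form of that dictionary. The second obstacle, smaller, is whether the needed multiplicativity of $*$ under signatures is literally available or must be reproven in the real closed case; either way it is routine once the first part is in hand.
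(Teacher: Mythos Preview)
Your plan for the first assertion has a genuine circularity problem. You propose to extend scalars to $F_P$, observe that zero signature over a real closed base forces the Morita-transferred form to be hyperbolic, and then ``feed this back'' to contradict properness of $\CP$ by invoking ``the dictionary between positive cones and signatures established in \cite{A-U-pos}''. But that dictionary is precisely what the present paper is re-establishing from scratch: the original proofs in \cite{A-U-pos} rested on the flawed Lemma~5.5, and Proposition~\ref{anonzero} is one of the foundational inputs to the corrected chain (it feeds Proposition~\ref{Sylv}, then Lemma~\ref{witteq}, then Propositions~\ref{m-nt} and \ref{torsion-free}, then Theorem~\ref{same-as-ker}). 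In particular, you cannot assume $P\notin\Nil[A,\s]$ (that is only proved in Theorem~\ref{same-as-ker}), and you cannot assume that $\CP\ox 1$ extends to a positive cone on $A\ox_F F_P$ (that is \cite[Proposition~5.8]{A-U-pos}, the very result the paper is rebuilding). Without such an extension there is no mechanism to convert a hyperbolicity statement over $F_P$ into a representation identity in $A$ itself, so the contradiction with (P5) never materializes. Your route~1 via $\CP[-a]$ is not circular, but as you note it only produces a relation in $\CP$ which holds regardless of the value of $\sign_P^\mu\qf{a}_\s$, and you do not explain how to connect it to the signature hypothesis.

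The paper's argument sidesteps all of this by staying over $F$. From $\sign_P^\mu\qf{a}_\s=0$ and continuity of the total signature map \cite[Theorem~7.2]{A-U-kneb}, the signature vanishes on a Harrison set $H(u_1,\dots,u_k)\ni P$; tensoring with the Pfister form $\pf{u_1,\dots,u_k}$ kills the signature at \emph{every} ordering, and Pfister's local-global principle \cite{L-U,B-U} then makes a multiple of $\pf{u_1,\dots,u_k}\ox\qf{a}_\s$ hyperbolic over $(A,\s)$ itself. Since all diagonal entries of this form lie in $P\cdot a\subseteq\CP$ while hyperbolicity forces it to represent $-a$, one gets $-a\in\CP$ directly, contradicting (P5). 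The key idea you are missing is this use of the local-global principle to descend from a single-ordering vanishing to global torsion without ever leaving $F$.

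For the second assertion your instinct is correct and close to what the paper does. The paper identifies $\qf{a}_\s*\qf{a}_\s\simeq\qf{a}_\s*\qf{a^{-1}}_\s\simeq T_{(A,\s_a)}$ with $\s_a=\Int(a^{-1})\circ\s$, and then quotes the formula $\sign_P T_{(A,\s_a)}=\lambda_P^2(\sign_P^\mu\qf{a}_\s)^2$ with $\lambda_P\ne 0$ from \cite{A-U-PS}; this is exactly the multiplicativity you were hoping for, in a form that is already in the literature.
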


\begin{proof}
  Assume for the sake of contradiction that $\sign_P^\mu \qf{a}_\s=0$.
  By continuity of the total signature map $\sign^\mu_\bullet \qf{a}_\s$ (cf. 
  \cite[Theorem~7.2]{A-U-kneb}), there exist $u_1,\ldots, u_k \in F^\x$ such
  that $P$ belongs to the Harrison set $H(u_1,\ldots, u_k)$  and 
  $\sign^\mu_\bullet \qf{a}_\s=0$
  on $H(u_1,\ldots, u_k)$. Consider the Pfister form 
  $\pf{u_1,\ldots, u_k}:= \qf{1,u_1}\ox \cdots \ox \qf{1,u_k}$. Then we have
  $\sign_Q^\mu  \pf{u_1,\ldots, u_k} \ox \qf{a}_\s=0 $, for all $Q\in X_F$.
  It then follows from Pfister's local-global principle 
  (cf. \cite[Theorem~4.1]{L-U} or \cite[Theorem~6.5]{B-U})
  that there
  exists $n\in \N$ such that $2^n \x \pf{u_1,\ldots, u_k} \ox \qf{a}_\s$ is
  hyperbolic. Since this form is a diagonal form with $2^{k+n}$
  entries we can write it as a sum of hyperbolic planes as follows:
  \[
    2^n\x\pf{u_1,\ldots, u_k} \ox \qf{a}_\s\simeq 2^{k+n-1}\x \qf{-a,a}_\s.
  \]
  In particular, $-a$ is represented by the form on the left-hand side and so
  $a\in -\CP$, which contradicts that $\CP$ is proper.  
  
  Next we prove the second statement. 
  Since $\s(a^{-1})aa^{-1}=a^{-1}aa^{-1}=a$, we have $\qf{a}_\s\simeq 
  \qf{a^{-1}}_\s$. Therefore,
  \[
    \qf{a}_\s * \qf{a}_\s \simeq \qf{a}_\s * \qf{a^{-1}}_\s\simeq 
    \vf_{a,a^{-1},\s}= T_{(A,\s_a)},
  \]
  where $\s_a:=\Int(a^{-1})\circ \s$ and 
  \[
  T_{(A,\s_a)}(x,y):=\Trd_A(\s_a(x)y)=\Trd_A(a^{-1}\s(x)ay).
  \]
  It then follows from \cite[Equation~(4.1) and Proposition~4.4(i)]{A-U-PS} that
  \[
    \sign_P(\qf{a}_\s * \qf{a}_\s) =\sign_P T_{(A,\s_a)} = (\sign_P \s_a)^2
    ={\lambda_P}^2 (\sign_P^\mu \qf{a}_\s)^2,
  \]
  where $\lambda_P\not=0$.   
\end{proof}

\begin{prop}\label{Sylv}
  Let $\CP$ be a positive cone on $(A,\s)$ over $P \in X_F$
  and let $a\in \CP\cap A^\x$. For every
  nonsingular hermitian form $\vf$ over $(A,\s)$ there exists a nonsingular 
  quadratic form $q$ over $F$ such that $\sign_P q\not=0$ and
  \[
    q\ox \vf \simeq (\qf{u_1,\ldots, u_r} \perp \qf{-v_1,\ldots, -v_s})\ox 
    \qf{a}_\s
  \]
  for some $u_1,\ldots, u_r,v_1,\ldots, v_s \in P\smz$.  
\end{prop}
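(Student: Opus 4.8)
The plan is to exploit the pairing $*$ and its pivot property \eqref{pivot} to rewrite an arbitrary diagonal entry $\qf{c}_\s$ as a sum of scaled copies of $\qf{a}_\s$, and then to reduce the general case to the diagonal one by weak diagonalization.

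\emph{Single entry.} First I would prove the statement for $\vf=\qf{c}_\s$ with $c\in\Sym(A,\s)\cap A^\x$ fixed. Applying \eqref{pivot} with the double pivot $\psi=\chi=\qf{a}_\s$ gives
\[
  \vf_{c,a,\s}\ox_{Z(A)}\qf{a}_\s \;\simeq\; \vf_{a,a,\s}\ox_{Z(A)}\qf{c}_\s .
\]
Since $\Sym(Z(A),\iota)=F$ in all cases, the nonsingular hermitian forms $\vf_{c,a,\s}$ and $\vf_{a,a,\s}$ over $(Z(A),\iota)$ diagonalize with entries in $F^\x$: write $\vf_{c,a,\s}\simeq\qf{w_1,\ldots,w_m}_\iota$ and $\vf_{a,a,\s}\simeq\qf{t_1,\ldots,t_p}_\iota$. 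As the $w_j,t_j$ are central and fixed by $\s$, tensoring over $Z(A)$ with $\qf{a}_\s$ (resp.\ $\qf{c}_\s$) turns these into $\qf{w_1a,\ldots,w_ma}_\s$ (resp.\ $\qf{t_1c,\ldots,t_pc}_\s$), so with $q_0:=\qf{t_1,\ldots,t_p}$ (a quadratic form over $F$ depending only on $a$) the above isometry becomes $\qf{w_1,\ldots,w_m}\ox\qf{a}_\s\simeq q_0\ox\qf{c}_\s$. Since $P$ is a total order on $F$, each $w_j$ is $>_P0$ or $<_P0$; grouping accordingly rewrites $\qf{w_1,\ldots,w_m}$ as $\qf{u_1,\ldots,u_r}\perp\qf{-v_1,\ldots,-v_s}$ with all $u_i,v_j\in P\smz$, hence
\[
  q_0\ox\qf{c}_\s \;\simeq\; (\qf{u_1,\ldots,u_r}\perp\qf{-v_1,\ldots,-v_s})\ox\qf{a}_\s .
\]

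\emph{Nonvanishing of $\sign_P q_0$.} This is exactly where $a\in\CP\cap A^\x$ is used, via Proposition~\ref{anonzero}: $\sign_P\vf_{a,a,\s}=\sign_P(\qf{a}_\s*\qf{a}_\s)\ne0$, i.e.\ $\sign_P\qf{t_1,\ldots,t_p}_\iota\ne0$. If $\iota=\id$ this is literally $\sign_Pq_0$. If $\iota\ne\id$, say $Z(A)=F(\sqrt d)$, then $(A,\s)$ admitting a positive cone over $P$ forces $P\notin\Nil[A,\s]$, hence $d<_P0$; then by \eqref{sign0}--\eqref{sign1} the factor $\tfrac12$ in the signature over $(Z(A),\iota)$ is cancelled by $\sign_P\qf{1,-d}=2$, so $\sign_Pq_0=\sign_P\vf_{a,a,\s}\ne0$ again.

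\emph{General $\vf$.} By weak diagonalization, \cite[Lemma~2.2]{A-U-pos}, there is $\ell\in\N$ with $\ell\x\vf\simeq\qf{c_1,\ldots,c_k}_\s$ for some $c_i\in\Sym(A,\s)\cap A^\x$ (a hyperbolic summand, if present, is $\qf{1,-1}_\s$, which is of this shape). Applying the single-entry case to each $c_i$ with the \emph{same} $q_0$ and using that $\ox$ distributes over $\perp$,
\[
  (\ell\,\qf{1}\ox q_0)\ox\vf \;=\; q_0\ox(\ell\x\vf) \;\simeq\; q_0\ox\qf{c_1,\ldots,c_k}_\s \;\simeq\; (\qf{u_1,\ldots,u_r}\perp\qf{-v_1,\ldots,-v_s})\ox\qf{a}_\s
\]
after collecting all positive and negative coefficients ($u_i,v_j\in P\smz$). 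Taking $q:=\ell\,\qf{1}\ox q_0$, a nonsingular quadratic form over $F$ with $\sign_Pq=\ell\cdot\sign_Pq_0\ne0$, finishes the proof. The only genuinely clever step is the use of \eqref{pivot} with the double pivot $\psi=\chi=\qf{a}_\s$; the rest (diagonalizing hermitian forms over $(Z(A),\iota)$ with coefficients in $F$, the signature bookkeeping through Proposition~\ref{anonzero}, and the weak-diagonalization reduction) is routine, so I do not expect a real obstacle beyond spotting that identity.
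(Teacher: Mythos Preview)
Your argument is correct and rests on the same key identity as the paper's proof, namely the pivot property \eqref{pivot} with $\psi=\qf{a}_\s$ together with Proposition~\ref{anonzero} to guarantee $\sign_P q\ne 0$. The only difference is that you take a detour: you first establish the one-dimensional case $\vf=\qf{c}_\s$ and then reduce the general case to it via weak diagonalization. The paper instead applies \eqref{pivot} directly to the arbitrary nonsingular form $\vf$,
\[
  (\qf{a}_\s*\qf{a}_\s)\ox_{Z(A)}\vf \;\simeq\; (\vf*\qf{a}_\s)\ox_{Z(A)}\qf{a}_\s,
\]
and diagonalizes both pairings $\qf{a}_\s*\qf{a}_\s$ and $\vf*\qf{a}_\s$ over $(Z(A),\iota)$ with entries in $F^\x$; invoking \cite[Lemma~1.1]{A-U-Az-PLG} to pass from $\ox_{Z(A)}$ to $\ox_F$ then gives the claim in one shot with $q=\qf{w_1,\ldots,w_t}$ the diagonalization of $\qf{a}_\s*\qf{a}_\s$. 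Your weak-diagonalization step is harmless but unnecessary, since the pairing $*$ is already defined for all hermitian forms, not just rank-one forms.
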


\begin{proof}
  Using 
  \eqref{pivot}, we have
  \begin{equation}\label{qfprod}
    (\qf{a}_\s * \qf{a}_\s) \ox_{Z(A)} \vf \simeq (\vf * \qf{a}_\s) \ox_{Z(A)}
    \qf{a}_\s. 
  \end{equation}
  The forms $\qf{a}_\s * \qf{a}_\s$ and $\vf * \qf{a}_\s$ are both 
  nonsingular
  hermitian
  over $(Z(A), \iota)$,  and are thus diagonalizable
  with coefficients in $\Sym(Z(A), \iota)\cap Z(A)^\x=F^\x$. 
  Hence there exist $w_1,\ldots, w_t\in F^\x$ such that 
  $\qf{a}_\s * \qf{a}_\s\simeq \qf{w_1,\ldots, w_t}_\iota$, and there exist
  $u_1,\ldots, u_r, v_1,\ldots, v_s \in P\smz$ such that
  \[
    \vf * \qf{a}_\s \simeq \qf{u_1,\ldots, u_r}_\iota \perp 
    \qf{-v_1,\ldots, -v_s}_\iota.
  \]
  By \cite[Lemma~1.1]{A-U-Az-PLG} applied to \eqref{qfprod} we obtain
  \begin{align*}
    \qf{w_1,\ldots, w_t} \ox_F \vf
    &\simeq \qf{w_1,\ldots, w_t}_\iota \ox_{Z(A)} \vf 
    \\ 
    &\simeq (\qf{u_1,\ldots, u_r}_\iota \perp 
    \qf{-v_1,\ldots, -v_s}_\iota) \ox_{Z(A)} \qf{a}_\s\\ 
    &\simeq
    (\qf{u_1,\ldots, u_r} \perp 
        \qf{-v_1,\ldots, -v_s}) \ox_F \qf{a}_\s.     
  \end{align*} 
  It follows from Proposition~\ref{anonzero} that 
  $\sign_P \qf{w_1,\ldots, w_t}_\iota \not=0$, and thus that
  $\sign_P \qf{w_1,\ldots, w_t} \not=0$ by \eqref{sign0} and \eqref{sign1}.
\end{proof}

\begin{lemma}\label{samenr}
  Let $\CP$ be a positive cone on $(A,\s)$ over $P \in X_F$.
  In an isometry of diagonal hermitian forms with coefficients in
  $\CP \cap A^\x$ and $-\CP \cap A^\x$, if there are as many elements in
  $\CP$ as in $-\CP$ on one side, it must be the same on the other side.
\end{lemma}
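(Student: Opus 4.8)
The plan is to reduce the assertion to the fact that a nonsingular diagonal hermitian form over $(A,\s)$ whose coefficients all lie in $\CP\cap A^\x$ is anisotropic. Granting this, here is how I would argue. After renaming, write the given isometry as
\[
  \qf{f_1,\dots,f_m,-g_1,\dots,-g_m}_\s \;\simeq\; \qf{c_1,\dots,c_p,-e_1,\dots,-e_k}_\s ,
\]
with all $f_i,g_j,c_i,e_j\in\CP\cap A^\x$; the hypothesis is that the left-hand side has equally many coefficients in $\CP$ as in $-\CP$, while $p$ and $k$ count the coefficients of the right-hand side lying in $\CP$ and in $-\CP$ respectively, so the goal is to prove $p=k$. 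Since the submodule $\{(v,v)\mid v\in A\}$ is a Lagrangian of $\qf a_\s\perp\qf{-a}_\s$, this form is metabolic and hence $[\qf{-a}_\s]=-[\qf a_\s]$ in $W(A,\s)$ for every $a\in A^\x$; substituting this into the displayed isometry yields
\[
  \bigl[\qf{f_1,\dots,f_m,e_1,\dots,e_k}_\s\bigr] \;=\; \bigl[\qf{c_1,\dots,c_p,g_1,\dots,g_m}_\s\bigr]
\]
in $W(A,\s)$, an equality of Witt classes of two nonsingular diagonal hermitian forms whose coefficients all belong to $\CP\cap A^\x$.

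To prove the anisotropy statement, let $\psi=\qf{d_1,\dots,d_r}_\s$ with $d_1,\dots,d_r\in\CP\cap A^\x$. Any element represented by $\psi$ is of the form $\sum_i\s(w_i)d_iw_i$ with $w_i\in A$, so it lies in $\CP$ by (P3) and (P2); thus $D_{(A,\s)}(\psi)\subseteq\CP$. If $\psi$ were not anisotropic it would split off a hyperbolic plane, i.e.\ $\psi\simeq\qf{1,-1}_\s\perp\psi'$ for some $\psi'$; but $\qf{1,-1}_\s$ represents $-d_1$ (take $x_1=(1-d_1)/2$, $x_2=(1+d_1)/2$), so $\psi$ would represent $-d_1$, forcing $-d_1\in\CP$. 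Together with $d_1\in\CP$ and (P5) this gives $d_1\in\CP\cap(-\CP)=\{0\}$, contradicting $d_1\in A^\x$. Hence $\psi$ is anisotropic.

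Finally, the two forms $\qf{f_1,\dots,f_m,e_1,\dots,e_k}_\s$ and $\qf{c_1,\dots,c_p,g_1,\dots,g_m}_\s$ are anisotropic and Witt-equivalent, hence isometric, since over a simple algebra with involution the anisotropic representative of a Witt class is unique up to isometry; comparing ranks gives $m+k=p+m$, that is $p=k$, which is the assertion. The point I expect to need the most care is the anisotropy claim when $A$ is not a division algebra: there a rank-one form $\qf d_\s$ may possess nonzero isotropic vectors even though it carries no hyperbolic summand, so one must derive the contradiction through the universality of the hyperbolic plane, as above, rather than by exhibiting an isotropic vector directly. The remaining tools are standard for hermitian forms over simple algebras with involution: metabolic forms vanish in $W(A,\s)$; a nonsingular form that is not anisotropic splits off a hyperbolic plane; and the anisotropic part of a Witt class is unique up to isometry.
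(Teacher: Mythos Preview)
Your overall strategy is sound, and the key observation $D_{(A,\s)}(\psi)\subseteq\CP$ is exactly the right one. The gap is in the anisotropy step: you assert that if $\psi$ is not anisotropic then $\psi\simeq\qf{1,-1}_\s\perp\psi'$, but over a non-division $A\cong M_n(D)$ with $n>1$ this need not hold. A hyperbolic summand of a nonsingular form may be merely the basic hyperbolic form on $S\oplus S^*$ (with $S$ the simple right $A$-module), which is strictly smaller than $\qf{1,-1}_\s$ on $A^2\cong S^{2n}$, and such a small hyperbolic form is \emph{not} universal: over $(M_2(\R),t)$, for instance, the rank-one hyperbolic form $\qf{\diag(1,-1)}_t$ does not represent $-I_2$ (taking determinants rules it out). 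Hence your computation with $x_1=(1-d_1)/2$, $x_2=(1+d_1)/2$, which is specific to $\qf{1,-1}_\s$, does not apply to an arbitrary hyperbolic summand. You rightly flag this spot as delicate, but the fix you propose (universality of ``the'' hyperbolic plane) presupposes precisely the splitting that is unjustified.

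The paper's proof avoids anisotropy altogether. From the given isometry it adds $\qf{b_1,\ldots,b_t,d_1,\ldots,d_t}_\s$ to both sides and cancels the resulting copies of $\qf{b_i,-b_i}_\s$ and $\qf{d_j,-d_j}_\s$ (Witt cancellation, valid over simple algebras with involution), obtaining an honest isometry of diagonal forms on free modules in which one side has all entries in $\CP\cap A^\x$ while the other carries the entry $-b_r$. Since that entry is then represented by the first side, $-b_r\in\CP$, contradicting (P5). Your route can be repaired with the same idea in a different guise: if $\psi$ had any hyperbolic summand at all, then $n\times\psi$ (still diagonal with entries in $\CP\cap A^\x$) would split off a full $\qf{1,-1}_\s$, and your contradiction would go through; alternatively, just imitate the paper and bypass the anisotropy claim.
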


\begin{proof}
  Assume that, for some $a_i, b_i,
  c_i, d_i \in \CP\cap A^\x$ we have
  \[\qf{a_1, \ldots, a_r}_\s \perp \qf{-b_1, \ldots, -b_r}_\s \simeq
  \qf{c_1, \ldots, c_s}_\s \perp \qf{-d_1, \ldots, -d_t}_\s,\]
  with, for instance, $s > t$. Then $s > r > t$ and
  \begin{align*}
    \qf{c_1, \ldots, c_s}_\s \perp \qf{b_1, \ldots, b_t}_\s \simeq
    &\qf{a_1, \ldots, a_r}_\s \perp \qf{d_1, \ldots, d_t}_\s \perp \\
    &\quad \qf{-b_{t+1}, \ldots, -b_r}_\s.
  \end{align*}
  Since the entries on the left-hand side are all in $\CP\cap A^\x$, 
  the entries
  on the right-hand side must be in $\CP$ (they are represented by the first
  form, and $\CP$ is closed under the operations presented in properties (P2)
  and (P3)). In particular $-b_r \in \CP$, which
  contradicts that $\CP$ is proper.
\end{proof}

\section{Signature maps from positive cones}
\label{secsign}

Consider a positive cone $\CP$ on $(A,\s)$
over $P\in X_F$.
In this section we will define the signature map $\sign_P^\mu W(A,\s)
\rightarrow \Z$ directly out of $\CP$ via the concept of 
prime m-ideals, that was introduced in \cite{A-U-prime}, and that we recall
now.

\begin{defi}[{\cite[Definition~5.1]{A-U-prime}}]
  We say that a pair $(I,N)$ is an \emph{m-ideal} of $W(A,\s)$ if:
  \begin{enumerate}
    \item $I$ is an ideal of $W(F)$ and $N$ is a $W(F)$-submodule 
    of $W(A,\s)$;
    \item $I \cdot W(A,\s) \subseteq N$.
  \end{enumerate}
  In addition we say that the m-ideal $(I,N)$ is \emph{prime} if 
  $I$ is a proper prime ideal of $W(F)$,
  $N\not=W(A,\s)$ and, 
  for every $q\in W(F)$ and every $h\in W(A,\s)$, $q\cdot h \in N$ implies
  that $q \in I$ or $h \in N$.
\end{defi}

We recall \cite[Proposition~6.5]{A-U-prime}:

\begin{prop}
  Let $(I,N)$ be a prime m-ideal of $W(A,\s)$ such that $2 \not \in I$ and
  $W(A,\s)/N$ is torsion-free. Then there exists $P \in X_F$ such that $(I,N) =
  (\ker \sign_P,  \ker \sign_P^\mu)$.
\end{prop}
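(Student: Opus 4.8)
The statement we want to establish is a converse to the construction that associates to an ordering $P\in X_F\sm\Nil[A,\s]$ the prime m-ideal $(\ker\sign_P,\ker\sign_P^\mu)$; namely, given an abstract prime m-ideal $(I,N)$ with $2\notin I$ and $W(A,\s)/N$ torsion-free, we must produce an ordering $P$ realizing it. The plan is to first recover $P$ from the ideal $I$ alone, using the classical theory of the Witt ring of $F$, and then to show that the second coordinate $N$ is forced to coincide with $\ker\sign_P^\mu$.

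\textbf{Step 1: recovering $P$ from $I$.} Since $I$ is a proper prime ideal of $W(F)$ with $2\notin I$, the prime ideals of $W(F)$ have been completely classified (Harrison, Lorenz, Leicht): the only prime ideals not containing the torsion ideal $W_t(F)$ — equivalently the only ones with $2$ not a zero divisor, and in particular the only ones not containing $2$ — are the kernels $\ker\sign_P$ for $P\in X_F$. Thus there is a unique $P\in X_F$ with $I=\ker\sign_P$. I would invoke the reference already in play, \cite{A-U-prime} (whose Proposition~5.x presumably records exactly this), or else cite Lam's book directly; either way this identification is essentially immediate and is not the hard part.

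\textbf{Step 2: identifying $N$ with $\ker\sign_P^\mu$.} With $P$ fixed, set $N':=\ker\sign_P^\mu$. One inclusion should come cheaply: from the m-ideal axiom $I\cdot W(A,\s)\subseteq N$ and $I=\ker\sign_P$ we get that $N$ contains everything killed by a form of nonzero total signature at $P$ in a suitable sense; more precisely, I expect $N\subseteq N'$ to follow from primality of $(I,N)$ together with the fact that $\sign_P^\mu$ is a $W(F)$-module map over $\sign_P$, so that $h\in N$ and the torsion-freeness of $W(A,\s)/N$ force $\sign_P^\mu h=0$ after multiplying by a suitable Pfister form as in the proof of Proposition~\ref{anonzero}. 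For the reverse inclusion $N'\subseteq N$: take $h$ with $\sign_P^\mu h=0$; by continuity of the total signature (\cite[Theorem~7.2]{A-U-kneb}) and Pfister's local–global principle (as used in Proposition~\ref{anonzero}), there is a Pfister form $\pi$ over $F$ with $P\notin$ its zero set, i.e.\ $\sign_P\pi\neq 0$ hence $\pi\notin I$, such that $\pi\ox h$ is torsion, say $2^n\x(\pi\ox h)=0$ in $W(A,\s)$. Then $2^n\x(\pi\ox h)\in N$, and since $W(A,\s)/N$ is torsion-free and $2\notin I$ (so $2^n$ acts injectively on $W(A,\s)/N$), we get $\pi\ox h\in N$; primality of $(I,N)$ and $\pi\notin I$ then give $h\in N$. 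Combining, $N=\ker\sign_P^\mu$.

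\textbf{Main obstacle.} The genuinely delicate point is Step~2, and within it the bookkeeping around torsion: one must be careful that $\pi\ox h$ being torsion in $W(A,\s)$ really does land in $N$, and that "killing $2^n$" is legitimate — this is exactly where the two hypotheses $2\notin I$ and torsion-freeness of $W(A,\s)/N$ are used, and they must both be invoked in the right order (first torsion-freeness to pass from $2^n\x(\pi\ox h)\in N$ to $(\pi\ox h)\in N$, noting $2^n\x(\pi\ox h)$ is torsion hence maps to a torsion element of $W(A,\s)/N$ which must be $0$). The appeal to continuity of signatures plus Pfister's local–global principle to manufacture the Pfister form $\pi$ is the same mechanism already exploited in Proposition~\ref{anonzero}, so that part is routine given the earlier results; it is the interplay of the two primality/torsion hypotheses that requires care.
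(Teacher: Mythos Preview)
The paper does not supply a proof of this proposition: it is merely recalled from \cite[Proposition~6.5]{A-U-prime}, so there is no argument in the present paper to compare yours against.

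Your outline is essentially the standard one and captures the main mechanism correctly. Step~1 is fine (prime ideals of $W(F)$ not containing $2$ are exactly the $\ker\sign_P$). In Step~2, your argument for the inclusion $\ker\sign_P^\mu\subseteq N$ is right: given $h$ with $\sign_P^\mu h=0$, continuity of the total signature yields a Pfister form $\pi$ with $\sign_P\pi\neq 0$ (hence $\pi\notin I$) and $\sign_Q^\mu(\pi\ox h)=0$ for all $Q$; Pfister's local--global principle then makes $\pi\ox h$ torsion, so its class in the torsion-free group $W(A,\s)/N$ vanishes, and primality gives $h\in N$.

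However, your paragraph for the reverse inclusion $N\subseteq\ker\sign_P^\mu$ is garbled: ``$h\in N$ and torsion-freeness of $W(A,\s)/N$ force $\sign_P^\mu h=0$ after multiplying by a suitable Pfister form'' is not an argument --- multiplying an element already in $N$ by anything tells you nothing new, and torsion-freeness of $W(A,\s)/N$ is a statement about elements \emph{not} in $N$. The clean way to close this gap is to use what you have already proved: once $\ker\sign_P^\mu\subseteq N$ is known, $W(A,\s)/N$ is a quotient of $W(A,\s)/\ker\sign_P^\mu\cong\im\sign_P^\mu$. Since $N\neq W(A,\s)$ this image is nonzero, hence $P\notin\Nil[A,\s]$ and $\im\sign_P^\mu$ is a nonzero subgroup of $\Z$, i.e.\ infinite cyclic. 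A nonzero torsion-free quotient of $\Z$ is $\Z$ itself, so the surjection is an isomorphism and $N=\ker\sign_P^\mu$.
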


We will define a prime m-ideal $(I_\CP, N_\CP)$ such that $2\not\in I_\CP$ and
the quotient
$W(A,\s)/N_\CP$ is torsion-free directly from $\CP$, thus recovering the
signature map $\sign^\mu_P$  out of the positive cone $\CP$.

Therefore, and since we will ultimately have $N_\CP = \ker \sign^\mu_P$,
we need to determine the nonsingular hermitian forms over $(A,\s)$
that are good candidates for having zero signature at $P$, and use their Witt
classes as elements of $N_\CP$.

\begin{defi}\label{N}
  For a hermitian
  form $h$ over $(A,\s)$ we define the following property:
  \begin{equation}\label{Z}
  \left\{\begin{array}{l}
    \text{There}  \text{ exists} \text{ a nonsingular 
    quadratic form $q_h$ over $F$}\\
    \text{such that}\\
    \quad\bullet\ \sign_P q_h \not = 0 \text{ and}\\
    \quad\bullet\ q_h \ox h \simeq \qf{a_1, \ldots, a_r}_\s \perp \qf{-b_1, \ldots,
    -b_r}_\s\text{ for some }\\
    \qquad r\in \N \text{ and }
    a_1, \ldots, a_r, b_1, \ldots, b_r \in \CP \cap A^\x.
  \end{array}\right.
  \end{equation}
\end{defi}

\begin{lemma}\label{witteq} 
  Property~\eqref{Z} is preserved under Witt equivalence
  (of nonsingular forms).  
\end{lemma}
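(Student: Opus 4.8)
The statement to prove is that property~\eqref{Z} is preserved under Witt equivalence of nonsingular hermitian forms over $(A,\s)$. Since a Witt equivalence class of nonsingular forms is determined by the anisotropic kernel, and any two nonsingular forms in the same Witt class differ by adding hyperbolic planes, it suffices to show: (i) if $h$ satisfies~\eqref{Z} then so does $h \perp \qf{1,-1}_\s$ (or more generally $h$ together with any hyperbolic form), and (ii) conversely, if $h \perp \mathbb{H}$ satisfies~\eqref{Z} then so does $h$. Actually the cleanest route is to show that $h$ satisfies~\eqref{Z} if and only if $h \perp h'$ does, whenever $h'$ is hyperbolic; combined with the fact that Witt-equivalent nonsingular forms $h_1, h_2$ become isometric after adding suitable hyperbolic forms, this gives the result.

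\textbf{Key steps.}
First I would record the elementary fact that if $q$ is a nonsingular quadratic form over $F$ with $\sign_P q \neq 0$, then $q \ox \qf{c,-c}_\s$ is hyperbolic (indeed metabolic) over $(A,\s)$ for any $c \in \Sym(A,\s) \cap A^\x$, since $q \ox \qf{c,-c}_\s \simeq q \ox (\qf{c}_\s \perp \qf{-c}_\s)$ and $\qf{c}_\s \perp \qf{-c}_\s$ is hyperbolic; thus the right-hand side of~\eqref{Z} remains of the required shape after adding pairs $\qf{c,-c}_\s$ with $c \in \CP \cap A^\x$. Second, for the ``easy'' direction: suppose $h$ satisfies~\eqref{Z} with witness $q_h$, and fix an invertible $a \in \CP$ (which exists by \cite[Lemma~3.6]{A-U-pos}, as recalled before Proposition~\ref{anonzero}). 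Then
\[
  q_h \ox (h \perp \qf{a,-a}_\s) \simeq (q_h \ox h) \perp (q_h \ox \qf{a,-a}_\s),
\]
and the first summand is $\qf{a_1,\ldots,a_r}_\s \perp \qf{-b_1,\ldots,-b_r}_\s$ by hypothesis, while $q_h \ox \qf{a,-a}_\s$ is, after diagonalizing $q_h = \qf{e_1,\ldots,e_m}$, isometric to $\qf{e_1 a, \ldots, e_m a, -e_1 a, \ldots, -e_m a}_\s$. The entries $e_i a$ need not lie in $\CP$, but $q_h \ox \qf{a,-a}_\s$ is hyperbolic, hence Witt-trivial, so it is isometric to $k \times \qf{a,-a}_\s$ for a suitable $k$ (any two hyperbolic hermitian forms of the same rank over a division-type situation are isometric; in general use that hyperbolic forms of equal rank over $(A,\s)$ are isometric). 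That realizes $q_h \ox (h \perp \qf{a,-a}_\s)$ in the form required by~\eqref{Z} with the same $q_h$, so $h \perp \qf{a,-a}_\s$ satisfies~\eqref{Z}. Iterating, $h \perp (k \times \qf{a,-a}_\s)$ satisfies~\eqref{Z} for every $k$, i.e.\ $h$ plus any hyperbolic form satisfies~\eqref{Z}.

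\textbf{The main obstacle.}
The genuinely delicate direction is the converse: if $h \perp \mathbb{H}_k$ satisfies~\eqref{Z} (where $\mathbb{H}_k$ denotes a hyperbolic form of rank $k$), deduce that $h$ itself does. Here I would argue as follows. Given $h_1 \sim h_2$ nonsingular, there are $k_1, k_2$ and an isometry $h_1 \perp \mathbb{H}_{k_1} \simeq h_2 \perp \mathbb{H}_{k_2}$. By the easy direction just established, if $h_1$ satisfies~\eqref{Z} then so does $h_1 \perp \mathbb{H}_{k_1} \simeq h_2 \perp \mathbb{H}_{k_2}$; so it remains to strip off the hyperbolic summand. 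Suppose $g \perp \mathbb{H}_k$ satisfies~\eqref{Z} with witness $q$, so
\[
  q \ox (g \perp \mathbb{H}_k) \simeq \qf{a_1,\ldots,a_r}_\s \perp \qf{-b_1,\ldots,-b_r}_\s
\]
with all $a_i, b_i \in \CP \cap A^\x$. Since $q \ox (g \perp \mathbb{H}_k) \simeq (q \ox g) \perp (q \ox \mathbb{H}_k)$ and $q \ox \mathbb{H}_k$ is hyperbolic, Witt cancellation for hermitian forms over $(A,\s)$ gives that $q \ox g$ is Witt-equivalent to the right-hand side; but that right-hand side, being a diagonal form with $r$ entries in $\CP$ and $r$ in $-\CP$, has a well-controlled anisotropic part. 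Using Lemma~\ref{samenr}, any diagonal form isometric to $\qf{a_1,\ldots,a_r}_\s \perp \qf{-b_1,\ldots,-b_r}_\s$ with entries in $\CP \cap A^\x$ and $-\CP \cap A^\x$ again has equally many entries from each side; in particular the anisotropic kernel of $q \ox g$ can be diagonalized with equally many $\CP$-entries and $-\CP$-entries, and then padding back with pairs $\qf{a,-a}_\s$ (which are hyperbolic, hence absorbable) produces a representative of $q \ox g$ of exactly the shape demanded by~\eqref{Z}. Thus $g$ satisfies~\eqref{Z} with the same witness $q$. The subtle point throughout is handling the hyperbolic/metabolic forms over $(A,\s)$ — ensuring isometry (not merely Witt equivalence) of hyperbolic forms of equal rank and invoking Witt cancellation — and making sure that after all the manipulations every diagonal entry genuinely lands in $\CP \cap A^\x$; Lemma~\ref{samenr} and the closure properties (P2), (P3) of $\CP$ are what make this bookkeeping go through.
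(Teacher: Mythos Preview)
Your easy direction (adding a hyperbolic form to $h$) is fine, and in fact simpler than you make it: writing $q_h=\qf{e_1,\ldots,e_m}$, each pair $(e_ia,-e_ia)$ already contributes exactly one element of $\CP\cap A^\x$ and one of $-\CP\cap A^\x$ (depending on whether $e_i\in P$ or $-e_i\in P$), so no appeal to uniqueness of hyperbolic forms is needed.

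The genuine gap is in the converse direction. From
\[
(q\ox g)\perp(q\ox\mathbb H_k)\simeq\qf{a_1,\ldots,a_r}_\s\perp\qf{-b_1,\ldots,-b_r}_\s
\]
and Witt cancellation you only obtain that $q\ox g$ is \emph{isometric} to whatever remains after splitting off a hyperbolic subform of the correct rank from the right-hand side. You then assert that ``the anisotropic kernel of $q\ox g$ can be diagonalized with equally many $\CP$-entries and $-\CP$-entries'', but nothing you have invoked produces such a diagonalization. Lemma~\ref{samenr} does not manufacture diagonalizations; it only compares two diagonal forms whose entries are \emph{already} known to lie in $\pm\CP\cap A^\x$. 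Over a non-division $(A,\s)$ there is no reason the complement of a hyperbolic summand in $\qf{a_1,\ldots,a_r,-b_1,\ldots,-b_r}_\s$ should again be diagonal with entries in $\pm\CP\cap A^\x$, so Property~\eqref{Z} for $g$ is left unproved.

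The paper closes exactly this gap with Proposition~\ref{Sylv}: it shows that for \emph{any} nonsingular hermitian form $\vf$ (in particular for $h'$, $H$, and $H'$ separately) there is a quadratic form of nonzero signature at $P$ whose tensor with $\vf$ is diagonal with entries of the form $\pm u a$, $u\in P^\x$. Tensoring the isometry $h\perp H\simeq h'\perp H'$ by the product $\chi$ of all four witnesses ($q_h$, $q'$, $\vf$, $\psi$) makes every summand diagonal with entries in $\pm\CP\cap A^\x$, after which Lemma~\ref{samenr} legitimately forces the balance for $\chi\ox h'$. The missing ingredient in your argument is precisely an invocation of Proposition~\ref{Sylv} (or an equivalent device) to secure a $\pm\CP$-diagonalization of $q\ox g$ itself.
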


\begin{proof}
  Let $h$ be a nonsingular hermitian form over $(A,\s)$ that satisfies 
  Property~\eqref{Z}. Let $h'$ be a nonsingular hermitian form over $(A,\s)$ 
  such that 
  \begin{equation}\label{camargue}
    h\perp H  \simeq h'\perp H',
  \end{equation}
  where $H$ and $H'$ are hyperbolic 
  forms over $(A,\s)$. Let $a\in \CP\cap A^\x$.
  By Proposition~\ref{Sylv} there exist  nonsingular
  quadratic forms $q', \vf,\psi$ over $F$ that all have nonzero signature at
  $P$ and 
  such that 
  $q'\ox h'$, $\vf\ox H$ and $\psi \ox H'$ are diagonal hermitian forms
  of the form $\pi\ox\qf{a}_\s \perp \nu \ox \qf{-a}_\s$, for some diagonal
  quadratic forms $\pi$ and $\nu$ with coefficients in $P^\x$. 
  Let 
  $\chi:=q_h\ox q'\ox \vf\ox \psi$.
  It then follows from
  \eqref{camargue} that
  \[
    \chi \ox h \perp \chi \ox H \simeq \chi \ox h' \perp
    \chi \ox H'.
  \] 
  Observe that by taking signatures at $P$, the form $\chi\ox H$ has as many 
  entries in $\CP\cap A^\x$ as in
  $-\CP\cap A^\x$ since it is still a hyperbolic form and thus has signature 
  zero. The same argument applies to $\chi \ox H'$.
  By definition of $q_h$, the form $\chi\ox h$ also has as many 
  entries in $\CP\cap A^\x$ as in $-\CP\cap A^\x$. It then follows from
  Lemma~\ref{samenr} that the form $\chi \ox h'$ has as many 
  entries in $\CP\cap A^\x$ as in $-\CP\cap A^\x$. Since $\sign_P \chi\not=0$,
  we conclude that $h'$
  satisfies Property~\eqref{Z} with $q_{h'}=\chi$. 
\end{proof}

\begin{defi}
  Denoting
  Witt classes with square brackets,
  we define
  \[N_\CP := \{[h] \in W(A,\s) \mid \text{Property~\eqref{Z} holds for } h\}\]
  and
  \[I_\CP:=\{[q]\in W(F) \mid \sign_Pq=0\},\]
  the ideal of $W(F)$ corresponding to the ordering $P$ (which is clearly
  generated by the classes in $W(F)$ of all elements of the form $\qf{1,-u}$ for
  $u \in P$).
\end{defi}
 
Recall again that by \cite[Lemma~3.6]{A-U-pos} there exists an invertible 
element
$a$ in $\CP$. It follows that the form $\qf{a,-a}_\s$ satisfies 
Property~\eqref{Z}, and in particular that $N_\CP\not=\varnothing$.

\begin{prop}\label{m-nt}
  The pair $(I_\CP, N_\CP)$ is an m-ideal 
  of $W(A,\s)$,  and $N_\CP \not =
  W(A,\s)$.
\end{prop}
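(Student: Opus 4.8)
The plan is to verify the three conditions defining an m-ideal for the pair $(I_\CP, N_\CP)$, namely that $I_\CP$ is an ideal of $W(F)$, that $N_\CP$ is a $W(F)$-submodule of $W(A,\s)$, and that $I_\CP \cdot W(A,\s) \subseteq N_\CP$; and then separately to show $N_\CP \ne W(A,\s)$. The first condition is immediate since $I_\CP = \ker \sign_P$ is the classical prime ideal attached to the ordering $P$. Since Property~\eqref{Z} is preserved under Witt equivalence by Lemma~\ref{witteq}, $N_\CP$ is a well-defined subset of $W(A,\s)$, so the content is to check closure under addition (of Witt classes) and under multiplication by classes $[q] \in W(F)$.

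For the submodule property, I would argue as follows. Given $[h], [h'] \in N_\CP$ with witnessing forms $q_h, q_{h'}$, I want a single quadratic form that works for $h \perp h'$. After replacing $q_h$ by $q_h \ox q_{h'}$ and $q_{h'}$ by $q_h \ox q_{h'}$ — this keeps nonzero signature at $P$ since $\sign_P$ is a ring homomorphism and signatures multiply — one may assume $q_h = q_{h'} =: q$. Then $q \ox (h \perp h') \simeq (q\ox h) \perp (q \ox h')$ is a diagonal hermitian form whose entries, by hypothesis, lie in $\CP \cap A^\x$ and $-\CP \cap A^\x$, with equally many of each on each of the two pieces, hence equally many overall; so $h \perp h'$ satisfies \eqref{Z} with the same $q$. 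For closure under negation, note that if $q \ox h \simeq \qf{a_1,\dots,a_r}_\s \perp \qf{-b_1,\dots,-b_r}_\s$ then $q \ox (-h) = -(q\ox h) \simeq \qf{b_1,\dots,b_r}_\s \perp \qf{-a_1,\dots,-a_r}_\s$, which again has the required shape; alternatively $\qf{-1}\ox q$ works as the witness for $h$ with the roles of the $a_i$ and $b_i$ swapped. For the multiplication $I_\CP \cdot W(A,\s) \subseteq N_\CP$: take $[q_0] \in I_\CP$, so $\sign_P q_0 = 0$, and any hermitian $h$ over $(A,\s)$; by Proposition~\ref{Sylv} (with any fixed $a \in \CP \cap A^\x$, which exists by \cite[Lemma~3.6]{A-U-pos}) there is a nonsingular quadratic $q$ with $\sign_P q \ne 0$ and $q \ox h \simeq (\qf{u_1,\dots,u_r} \perp \qf{-v_1,\dots,-v_s}) \ox \qf{a}_\s$ with all $u_i, v_j \in P\smz$. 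Then $q \ox (q_0 \ox h) \simeq q_0 \ox (q \ox h)$; since $\sign_P q_0 = 0$, the integer form $q_0$ has equally many $+1$'s and $-1$'s in its Sylvester normal form at... more precisely, over $F_P$ it is Witt equivalent to a form with as many $\qf{1}$ as $\qf{-1}$, but to get an honest isometry with balanced entries one should pass to a suitable multiple or argue via \cite[Proposition~A.3]{A-U-PS} that $q_0 \ox (q \ox h)$, being diagonal with entries $u_i$-scaled copies of $\pm a$, has as many entries in $\CP \cap A^\x$ as in $-\CP \cap A^\x$ because scaling $\qf{a}_\s$ by $u \in P\smz$ keeps it in $\CP$ (properties (P2)–(P4)) and $\sign_P q_0 = 0$ forces the count to balance. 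Thus $q_0 \ox h$ satisfies \eqref{Z} with witness $q$, i.e. $[q_0 \ox h] \in N_\CP$.

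Finally, for $N_\CP \ne W(A,\s)$, I would show $[\qf{a}_\s] \notin N_\CP$ for a fixed $a \in \CP \cap A^\x$. Suppose it were; then there is a nonsingular $q_h$ over $F$ with $\sign_P q_h \ne 0$ and $q_h \ox \qf{a}_\s$ isometric to a diagonal hermitian form with equally many entries ($r$ of each) in $\CP \cap A^\x$ and $-\CP \cap A^\x$. Comparing $\mu$-signatures at $P$ (using additivity of $\sign_P^\mu$, the scaling formula, and Proposition~\ref{anonzero} which gives $\sign_P^\mu \qf{a}_\s \ne 0$): the left side has $\sign_P^\mu(q_h \ox \qf{a}_\s) = (\sign_P q_h)(\sign_P^\mu \qf{a}_\s) \ne 0$, while the right side, being a sum over entries in $\CP$ and $-\CP$ — all of which by Proposition~\ref{anonzero} have $\mu$-signature of the same sign as $\sign_P^\mu \qf{a}_\s$ for those in $\CP$, and the opposite sign for those in $-\CP$ — evaluates to $0$ since there are $r$ of each, a contradiction. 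The main obstacle I anticipate is the $I_\CP \cdot W(A,\s) \subseteq N_\CP$ step: getting from "$\sign_P q_0 = 0$" to an honest diagonalization with balanced membership in $\CP$ versus $-\CP$ requires care, since Witt equivalence alone does not preserve the number of diagonal entries — one must combine Proposition~\ref{Sylv}, the sign-balancing consequence of $\sign_P q_0 = 0$, and perhaps a padding argument to convert the Witt-class statement into the required isometric normal form of \eqref{Z}.
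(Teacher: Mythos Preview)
Your argument for $N_\CP \neq W(A,\sigma)$ has a genuine gap. You claim that the diagonal entries $a_1,\ldots,a_r \in \CP \cap A^\times$ all have $\mu$-signature of the same sign as $\sign_P^\mu \qf{a}_\sigma$, citing Proposition~\ref{anonzero}. But that proposition only asserts that each such signature is \emph{nonzero}; it says nothing about the sign being constant across $\CP$. The statement you actually need---that all invertible elements of $\CP$ have the same $\mu$-signature---is precisely Corollary~\ref{same-signature}, which is proved \emph{after} Proposition~\ref{m-nt} (indeed, its proof uses $N_\CP = \ker\sign_P^\mu$, which in turn relies on the current proposition). So your argument is circular. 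The paper avoids this by using Lemma~\ref{samenr}, a purely combinatorial statement about isometries of diagonal forms with entries in $\pm\CP$ that makes no reference to signatures: one writes $q_{\qf{a}_\sigma} \simeq \qf{u_1,\ldots,u_s}\perp\qf{-v_1,\ldots,-v_t}$ with $u_i,v_j \in P^\times$ and $s\neq t$, and then Lemma~\ref{samenr} forces $s=t$, a contradiction.

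For $I_\CP \cdot W(A,\sigma) \subseteq N_\CP$, your direct approach can be made to work (diagonalize $q_0$ over $F$; since $\sign_P q_0 = 0$, it has equally many entries in $P^\times$ and $-P^\times$, and the counts balance after tensoring), but the paper's route is cleaner: since $I_\CP$ is additively generated by the classes $[\qf{1,-u}]$ with $u\in P^\times$, it suffices to check property~\eqref{Z} for $\qf{1,-u}\otimes\varphi$, where the balancing is immediate from the explicit shape $\qf{a_1,\ldots,a_r,ub_1,\ldots,ub_s}_\sigma \perp \qf{-ua_1,\ldots,-ua_r,-b_1,\ldots,-b_s}_\sigma$. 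You also omit the verification that $W(F)\cdot N_\CP \subseteq N_\CP$; this is easy (scale the witness by $\qf{u}$) but should be stated.
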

\begin{proof}
  We have to check the following:
  \begin{enumerate}[label=(\arabic*)]
    \item $N_\CP+N_\CP \subseteq N_\CP$;
    \item $W(F) \cdot N_\CP \subseteq N_\CP$;
    \item $I_\CP \cdot W(A,\s) \subseteq N_\CP$;
    \item $N_\CP \not = W(A,\s)$.
  \end{enumerate}
  We do it in order.
  For the verification of (1) and (2) we fix  two hermitian
  forms $\vf$ and $\psi$
  over $(A,\s)$ that satisfy Property~\ref{Z}, so that $[\varphi], [\psi]
  \in N_\CP$.  Therefore, there are quadratic forms $q_\vf,q_\psi$ over $F$ 
  such that
  \[q_\vf \ox \vf \simeq \qf{a_1, \ldots, a_r}_\s \perp \qf{-b_1, \ldots,
    -b_r}_\s\]
    and
  \[q_\psi \ox \psi \simeq \qf{c_1, \ldots, c_s}_\s \perp \qf{-d_1, \ldots,
    -d_s}_\s\]
  for some $a_1, \ldots, a_r, b_1, \ldots, b_r, c_1, \ldots, c_s, d_1, \ldots,
  d_s \in \CP \cap A^\x$, and where $\sign_P q_\vf \not = 0$ and 
  $\sign_P q_\psi \not = 0$.
  \medskip

  (1) We show that $[\vf] + [\psi] = [\vf \perp \psi] \in N_\CP$ by showing
  that $\vf \perp \psi$ satisfies Property \eqref{Z}. We have
  \begin{multline}\label{what}
    (q_\vf \ox q_\psi) \ox (\vf \perp \psi) \simeq q_\psi 
    \ox  (\qf{a_1, \ldots, a_r}_\s \perp \qf{-b_1, \ldots,
    -b_r}_\s) \perp \\
     q_\vf \ox (\qf{c_1, \ldots, c_s}_\s \perp \qf{-d_1, \ldots, -d_s}_\s).
  \end{multline}
  Writing $q_\vf = q_+ \perp q_-$ and $q_\psi = q'_+ \perp q'_-$ with $q_+, 
  q'_+$
  positive definite at $P$ and $q_-, q'_-$ negative definite at $P$, we have 
  that the number of entries in $\CP\cap A^\x$ on the right-hand side 
  of \eqref{what}
    is
      \[(\dim q'_+)r + (\dim q'_-)r + (\dim q_+)s + (\dim q_-)s = (\dim 
      q_\psi)r +
      (\dim q_\vf)s,\]
  and that the number of entries in $-\CP\cap A^\x$ on the right-hand side 
    of \eqref{what} is
      \[(\dim q'_-)r + (\dim q'_+)r + (\dim q_-)s + (\dim q_+)s = (\dim 
      q_\psi)r +
      (\dim q_\vf)s.\]
  Both are equal, so $\vf \perp \psi$ satisfies Property \eqref{Z}.
  \medskip

(2) Since $W(F)$ is additively generated by classes of one-dimensional forms, it
    suffices to check that $[\qf{u}\ox \vf] \in N_\CP$ for every $u \in 
    F^\x$, which follows from the fact that the form $\qf{u} \ox \vf$ clearly
    satisfies Property \eqref{Z}.
    \medskip

(3) Let $\vf$ be a nonsingular hermitian form over $(A, \s)$. Since $I_\CP$ is
additively generated by the classes of the forms $\qf{1,-u}$ for $u \in P^\x$,
it suffices to check that $\qf{1,-u} \ox \vf$ satisfies Property \eqref{Z} for 
every
$u \in P^\x$. By Proposition~\ref{Sylv}
there is a nonsingular quadratic form $q_\vf$ over
$F$ such that $\sign_P q_\vf\not=0$ and
  \[
    q_\vf\ox \vf \simeq \qf{a_1,\ldots, a_r}_\s \perp \qf{-b_1,\ldots, -b_s}_\s
  \]
  for some $a_1,\ldots, a_r,-b_1,\ldots, -b_s \in \CP\cap A^\x $. Then
  \begin{multline*}
  q_\vf \ox (\qf{1,-u} \ox \vf) \simeq \qf{a_1, \ldots, a_r}_\s \perp 
  \qf{ub_1, \ldots, ub_s}_\s \perp\\ 
  \qf{-ua_1, \ldots, -ua_r}_\s \perp \qf{-b_1, \ldots,
  -b_s}_\s,
  \end{multline*}
  which shows that $\vf$ satisfies Property~\eqref{Z}.
    \medskip

(4)  Let $a \in \CP \cap A^\x$. We show that $[\qf{a}_\s] \not \in
    N_\CP$. Assume that it is not the case. Then there is a nonsingular 
    hermitian form
    $h$ over $(A,\s)$ such that $h$ satisfies Property~\eqref{Z} and $[h] =
    [\qf{a}_\s]$. It follows from Lemma~\ref{witteq} that $\qf{a}_\s$ also
    satisfies Property~\eqref{Z}, and thus  that
    \begin{equation}\label{four}
      q_{\qf{a}_\s} \ox \qf{a}_\s  \simeq \qf{a_1,
      \ldots, a_r}_\s \perp \qf{-b_1, \ldots, -b_r}_\s,
    \end{equation}
    with $a_1, \ldots, a_r, b_1, \ldots, b_r \in \CP\cap A^\x$.  
    We write 
    \[
      q_{\qf{a}_\s} \simeq
    \qf{u_1, \ldots, u_s} \perp \qf{-v_1, \ldots, -v_t} 
    \]
    with $u_1, \ldots,
    u_s, v_1, \ldots, v_t \in P^\x$. Since $\sign_P q \not = 0$ we have $s \not
    = t$. Equation \eqref{four} then becomes
    \[
      \qf{u_1a, \ldots, u_sa}_\s \perp
      \qf{-v_1a, \ldots, -v_ta}_\s 
      \simeq \qf{a_1, \ldots, a_r}_\s \perp \qf{-b_1, \ldots, -b_r}_\s.
    \] 
    By Lemma~\ref{samenr}, since the
    right-hand side has the same number of elements in $\CP$ as in $-\CP$, we
    must have $s=t$, contradiction.
\end{proof}

\begin{prop}\label{torsion-free}  The quotient
  $W(A,\s)/N_\CP$ is torsion-free and $(I_\CP, N_\CP)$ is a prime m-ideal.
\end{prop}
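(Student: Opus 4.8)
The plan is to split the statement into its two assertions and prove them in order, since the primality of $(I_\CP,N_\CP)$ will follow naturally once we know $W(A,\s)/N_\CP$ is torsion-free (the ideal $I_\CP=\ker\sign_P$ is already a proper prime ideal of $W(F)$ not containing $2$, and the m-ideal property was established in Proposition~\ref{m-nt}).

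\textit{Torsion-freeness.} I would show: if $h$ is a nonsingular hermitian form over $(A,\s)$ and $n\cdot[h]\in N_\CP$ for some $n\ge 1$, then $[h]\in N_\CP$. It suffices to treat $n=2^m$ since every positive integer divides some power of $2$ in the relevant sense (or rather: if $n[h]\in N_\CP$ then $2^m[h]\in N_\CP$ for $m$ with $n\mid 2^m$ up to odd factors --- more carefully, multiply by an odd form of nonzero signature at $P$, which lands back in $N_\CP$ by part (2) of Proposition~\ref{m-nt}, to clear odd factors). So assume $2^m\cdot h$, i.e. $2^m\times h$, lies in $N_\CP$. By Property~\eqref{Z} there is a nonsingular quadratic form $q$ over $F$ with $\sign_P q\neq 0$ and
\[
q\ox(2^m\times h)\simeq \qf{a_1,\ldots,a_r}_\s\perp\qf{-b_1,\ldots,-b_r}_\s
\]
with all $a_i,b_i\in\CP\cap A^\x$. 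The left-hand side is $(2^m\times q)\ox h$, and $2^m\times q$ is again a nonsingular quadratic form over $F$ with $\sign_P(2^m\times q)=2^m\sign_P q\neq 0$. Hence $h$ itself satisfies Property~\eqref{Z} with $q_h=2^m\times q$, so $[h]\in N_\CP$. This is the easy direction and the argument is essentially formal once one observes Property~\eqref{Z} only involves the \emph{existence} of a quadratic form with nonzero $P$-signature, and tensoring by $2^m\times\qf 1$ multiplies the signature by $2^m$ without disturbing the diagonalization.

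\textit{Primality.} By definition of prime m-ideal I must check: $I_\CP$ is a proper prime ideal of $W(F)$ --- this is classical, $I_\CP=\ker\sign_P$ with $P\in X_F$ and $W(F)/I_\CP\cong\Z$, an integral domain, so $I_\CP$ is prime and proper, and $2\notin I_\CP$; $N_\CP\neq W(A,\s)$ --- this is part (4) of Proposition~\ref{m-nt}; and the multiplicativity condition: for $q\in W(F)$ and $h\in W(A,\s)$ with $q\cdot h\in N_\CP$, either $q\in I_\CP$ or $h\in N_\CP$. For the last point, suppose $q\notin I_\CP$, i.e. $\sign_P q\neq 0$. We may assume $h$ nonsingular. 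Since $q\ox h\in N_\CP$, there is a nonsingular quadratic $q'$ over $F$ with $\sign_P q'\neq 0$ and $q'\ox(q\ox h)$ diagonal with coefficients in $\CP\cap A^\x$ and $-\CP\cap A^\x$ in equal numbers. But $q'\ox q$ is a nonsingular quadratic form over $F$ with $\sign_P(q'\ox q)=\sign_P q'\cdot\sign_P q\neq 0$, so $h$ satisfies Property~\eqref{Z} with $q_h=q'\ox q$, hence $[h]\in N_\CP$.

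\textit{Main obstacle.} Both directions reduce to the same trick --- absorbing a multiplier (a power of $2$, or the form $q$) into the witnessing quadratic form $q_h$ of Property~\eqref{Z} while preserving nonvanishing of the $P$-signature and the diagonal shape --- so there is no serious obstacle here; the only mild subtlety is the reduction from arbitrary torsion order $n$ to a power of $2$, which is handled by first multiplying through by a suitable odd-dimensional form of nonzero $P$-signature (legitimate by part (2) of Proposition~\ref{m-nt}) to ensure $N_\CP$ is closed under the needed operations, or alternatively by noting that $W(A,\s)/N_\CP$ embeds in $\Z$ once the signature map is identified, though that would be circular at this stage. I would therefore spell out the $2$-power reduction explicitly rather than appeal to the eventual identification $N_\CP=\ker\sign_P^\mu$.
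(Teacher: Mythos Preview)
Your proof is correct and follows essentially the same idea as the paper, but the torsion-freeness argument contains an unnecessary and somewhat confused detour. Your own computation
\[
q\ox(n\times h)=(n\times q)\ox h,\qquad \sign_P(n\times q)=n\cdot\sign_P q\neq 0,
\]
works verbatim for \emph{any} $n\ge 1$, not just for powers of $2$; there is nothing to ``clear'' and no reduction is needed. The paper's proof is exactly this one-line absorption of $\ell$ into the witnessing quadratic form. Your discussion of dividing powers of $2$ and multiplying by odd forms is not wrong, but it solves a problem that does not exist.

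For primality, your route and the paper's differ slightly. You absorb $q$ directly into the witness: from $q'\ox(q\ox h)$ balanced, take $q_h=q'\ox q$. The paper instead writes $[q]\equiv k\bmod I_\CP$ for some $k\in\Z$, uses $I_\CP\cdot W(A,\s)\subseteq N_\CP$ to get $k[h]\in N_\CP$, and then invokes torsion-freeness. Both are clean; yours avoids the reduction modulo $I_\CP$ at the cost of re-doing the absorption trick, while the paper's makes the dependence on the first part explicit.
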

\begin{proof}
  Let $\ell[h] = [\ell \x h] \in N_\CP$ for some $\ell \in \N$, where $h$ is a
  nonsingular hermitian form over $(A,\s)$. By Lemma~\ref{witteq},  
  $\ell \x h$ satisfies
  Property \eqref{Z}. Then
  \[q_{\ell \x h} \ox (\ell \x h) \simeq \qf{a_1, \ldots, a_r}_\s \perp 
  \qf{-b_1, \ldots,
    -b_r}_\s\]
  for some $a_1, \ldots, a_r, b_1, \ldots, b_r \in \CP \cap A^\x$, and so,  
  clearly
  \[(\ell \x q_{\ell \x h}) \ox h \simeq \qf{a_1, \ldots, a_r}_\s \perp 
  \qf{-b_1, \ldots,
    -b_r}_\s,\]
  proving that $h$ satisfies Property \eqref{Z} and thus $[h] \in N_\CP$.

  We now prove the second statement: Assume that $[qh] \in N_\CP$ for some 
  $[q] \in
  W(F)$ and $[h] \in W(A,\s)$. Since $I_\CP$ is the kernel of $\sign_P : W(F)
  \rightarrow \Z$, there is $k \in \Z$ such that $[q] = k \bmod I_\CP$. Thus 
  (and
  using that $I_\CP \cdot W(A,\s) \subseteq N_\CP$) we obtain 
  $k[h] \in
  N_\CP$. It follows that $k=0$ (and thus $[q]\in I_\CP$), or that 
  $[h] \in N_\CP$
  by the first part.
\end{proof}

\begin{thm}\label{same-as-ker}  We have
  $(I_\CP,N_\CP) = (\ker \sign_P, \ker \sign^\mu_P)$ and $P\not\in 
  \Nil[A,\s]$.
\end{thm}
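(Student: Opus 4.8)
The plan is to reduce everything to the recalled \cite[Proposition~6.5]{A-U-prime}, whose hypotheses have already been assembled in the preceding results, and then to pin down exactly which ordering it produces.

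First I would dispose of the assertion $P\notin\Nil[A,\s]$, which is in fact independent of the rest of the statement. By \cite[Lemma~3.6]{A-U-pos} there is an invertible element $a\in\CP$, and Proposition~\ref{anonzero} gives $\sign^\mu_P\qf{a}_\s\neq 0$. Hence the map $\sign^\mu_P$ on $W(A,\s)$ is not identically zero, and so $P\notin\Nil[A,\s]$ by \eqref{sign7}.

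Next I would apply the recalled \cite[Proposition~6.5]{A-U-prime} to the pair $(I_\CP,N_\CP)$. By Proposition~\ref{torsion-free} this pair is a prime m-ideal of $W(A,\s)$ and the quotient $W(A,\s)/N_\CP$ is torsion-free; moreover $2\notin I_\CP$, since $\sign_P\qf{1,1}=2\neq 0$ shows $\qf{1,1}\notin\ker\sign_P=I_\CP$. Therefore the recalled proposition produces an ordering $P'\in X_F$ with $(I_\CP,N_\CP)=(\ker\sign_{P'},\ker\sign^\mu_{P'})$. It then remains to identify $P'$ with $P$: for every $u\in F^\x$ one has $\qf{1,-u}\in\ker\sign_{P'}$ if and only if $1-\sign_{P'}\qf{u}=0$, i.e.\ if and only if $u\in P'$, and likewise $\qf{1,-u}\in I_\CP=\ker\sign_P$ if and only if $u\in P$. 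Since $I_\CP=\ker\sign_{P'}$, these conditions agree for all $u$, so $P=P'$ and hence $(I_\CP,N_\CP)=(\ker\sign_P,\ker\sign^\mu_P)$.

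The only genuine obstacle is the bookkeeping needed to check that all the hypotheses of \cite[Proposition~6.5]{A-U-prime} are in place --- primeness, torsion-freeness of the quotient, and $2\notin I_\CP$ --- and every one of these has already been arranged in Propositions~\ref{m-nt} and \ref{torsion-free}. The concluding step is just the standard observation that an ordering is recovered from the kernel of its signature map through the forms $\qf{1,-u}$, so no step should be difficult once those propositions are available.
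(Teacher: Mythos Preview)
Your argument is correct and essentially follows the paper's route: verify the hypotheses of \cite[Proposition~6.5]{A-U-prime} using Propositions~\ref{m-nt} and~\ref{torsion-free}, and observe that $I_\CP=\ker\sign_P$ forces the ordering produced to be $P$. The only cosmetic differences are that the paper simply notes ``by definition, $I_\CP=\ker\sign_P$'' (so the identification $P'=P$ is immediate without the $\qf{1,-u}$ computation), and it derives $P\notin\Nil[A,\s]$ at the end from $N_\CP\neq W(A,\s)$ (Proposition~\ref{m-nt}) and~\eqref{sign7} rather than from Proposition~\ref{anonzero}; both derivations are fine.
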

\begin{proof}
By definition, $I_\CP = \ker \sign_P \not \ni 2$. By \cite[Proposition
6.5]{A-U-prime} we obtain that $N_\CP = \ker \sign^\mu_P$. Therefore,
$P\not\in\Nil[A,\s]$ since $N_\CP\not=W(A,\s)$ by Proposition~\ref{m-nt}
and the equivalence in \eqref{sign7}.
\end{proof}

\begin{cor}\label{same-signature}
  Let $\CP$ be a positive cone on $(A,\s)$ over $P \in X_F$. Then, for every
  $a,b \in \CP \cap A^\x$, $\sign^\mu_P \qf{a}_\s = \sign^\mu_P \qf{b}_\s$.
\end{cor}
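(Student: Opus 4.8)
\textbf{Proof plan for Corollary~\ref{same-signature}.}
The plan is to read off the conclusion from Theorem~\ref{same-as-ker} together with the definition of $N_\CP$. First I would note that $P \notin \Nil[A,\s]$ by Theorem~\ref{same-as-ker}, so the signature map $\sign_P^\mu$ is genuinely defined and we must simply show it takes the same value on all $\qf{a}_\s$ with $a \in \CP \cap A^\x$. Fix $a, b \in \CP \cap A^\x$ and consider the form $\qf{a}_\s \perp \qf{-b}_\s$; the point is that this form satisfies Property~\eqref{Z} with $q_h = \qf{1}$ (which has $\sign_P \qf{1} = 1 \neq 0$), since $q_h \ox (\qf{a}_\s \perp \qf{-b}_\s) = \qf{a}_\s \perp \qf{-b}_\s$ is already in diagonal form with one entry in $\CP \cap A^\x$ and one in $-\CP \cap A^\x$ (here $r = 1$, $a_1 = a$, $b_1 = b$). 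Hence $[\qf{a}_\s \perp \qf{-b}_\s] \in N_\CP$.

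By Theorem~\ref{same-as-ker} we have $N_\CP = \ker \sign_P^\mu$, so $\sign_P^\mu(\qf{a}_\s \perp \qf{-b}_\s) = 0$, and by additivity of $\sign_P^\mu$ this gives $\sign_P^\mu \qf{a}_\s = \sign_P^\mu \qf{b}_\s$, as desired.

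There is essentially no obstacle here: the work has all been done in establishing that $(I_\CP, N_\CP)$ is a prime m-ideal with the stated quotient properties and in invoking \cite[Proposition~6.5]{A-U-prime}. The only small thing to be careful about is that $\qf{a}_\s \perp \qf{-b}_\s$ is nonsingular (it is, since $a$ and $b$ are invertible) so that it has a well-defined Witt class to which membership in $N_\CP$ applies, and that Property~\eqref{Z} is literally satisfied in its stated form with $r = 1$.

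\begin{proof}
  By Theorem~\ref{same-as-ker}, $P \notin \Nil[A,\s]$ and $N_\CP = \ker
  \sign_P^\mu$. Let $a, b \in \CP \cap A^\x$. The hermitian form
  $\qf{a}_\s \perp \qf{-b}_\s$ is nonsingular and satisfies Property~\eqref{Z}
  with $q_h := \qf{1}$: indeed $\sign_P \qf{1} = 1 \neq 0$ and
  $\qf{1} \ox (\qf{a}_\s \perp \qf{-b}_\s) \simeq \qf{a}_\s \perp \qf{-b}_\s$,
  which is of the required shape with $r = 1$, $a_1 = a$ and $b_1 = b$.
  Hence $[\qf{a}_\s \perp \qf{-b}_\s] \in N_\CP = \ker \sign_P^\mu$, and by
  additivity of $\sign_P^\mu$ we conclude that $\sign_P^\mu \qf{a}_\s =
  \sign_P^\mu \qf{b}_\s$.
\end{proof}
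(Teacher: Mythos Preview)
Your proof is correct and follows essentially the same approach as the paper's: both observe that $\qf{a,-b}_\s$ trivially satisfies Property~\eqref{Z} (you make the choice $q_h=\qf{1}$ explicit), conclude that its Witt class lies in $N_\CP=\ker\sign_P^\mu$ via Theorem~\ref{same-as-ker}, and finish by additivity. The paper's version is terser but identical in substance.
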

\begin{proof}
  The hermitian form $\qf{a,-b}_\s$ trivially satisfies Property \eqref{Z}.
  Therefore $[\qf{a,-b}_\s] \in N_\CP = \ker \sign^\mu_P$, so that
  $\sign^\mu_P \qf{a}_\s = \sign^\mu_P \qf{b}_\s$.
\end{proof}

\section{Description of positive cones and the
topology of $X_{(A,\s)}$}
\label{secdesc}

We use the previous results to describe positive cones in terms of 
$\sign^\mu_P$ and to establish some properties of the (Harrison) topology
$\CT_\s$ on the space of positive cones $X_{(A,\s)}$ of $(A,\s)$. Recall from
\cite[Section~9]{A-U-pos} that $\CT_\s$ is the topology generated by the sets
\[
  H_\s(a_1,\ldots, a_k):=\{\CP \in X_{(A,\s)} \mid a_1,\ldots, a_k \in \CP\},
\]
where $a_1,\ldots, a_k \in \Sym(A,\s)$.

Let $(D,\vt)$ be an $F$-division algebra with involution, and let $\eta$ be
a reference form for $(D,\vt)$.

\begin{prop}\label{M-max}
  Let $P \in X_F\sm \Nil[D,\vt]$. Then $\CM^\eta_P(D,\vt)$ is a positive cone on
  $(D,\vt)$ over $P$.
\end{prop}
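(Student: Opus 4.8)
The goal is to show that the prepositive cone $\CM_P^\eta(D,\vt)$ (which we already know is a prepositive cone over $P$ by Proposition~\ref{lausanne}, since $D$ is a division algebra) is in fact \emph{maximal}, i.e.\ a positive cone. The plan is to take an arbitrary prepositive cone $\CQ$ on $(D,\vt)$ with $\CM_P^\eta(D,\vt)\subseteq\CQ$ and show $\CQ\subseteq\CM_P^\eta(D,\vt)$. Since $\CM_P^\eta(D,\vt)\subseteq\CQ$ forces $\CQ_F=\CM_P^\eta(D,\vt)_F=P$, the cone $\CQ$ is over $P$ as well. The key point will be to use the results of Section~\ref{secsign} applied to a \emph{maximal} prepositive cone $\CP$ containing $\CQ$.

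The main steps: First, enlarge $\CQ$ to a positive cone $\CP$ (over $P$) by Zorn's lemma, so that $\CM_P^\eta(D,\vt)\subseteq\CP$. Now apply Corollary~\ref{same-signature} to $\CP$: for any $a,b\in\CP\cap D^\x$ we have $\sign_P^\eta\qf{a}_\vt=\sign_P^\eta\qf{b}_\vt$. Pick a witness $a_0\in\CM_P^\eta(D,\vt)\cap D^\x$ (which is nonempty by the remark after its definition, since $P\notin\Nil[D,\vt]$); then $a_0\in\CP\cap D^\x$, so every $b\in\CP\cap D^\x$ satisfies $\sign_P^\eta\qf{b}_\vt=\sign_P^\eta\qf{a_0}_\vt=m_P(D,\vt)$. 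Hence $\CP\cap D^\x\subseteq\CM_P^\eta(D,\vt)$, and since $0\in\CM_P^\eta(D,\vt)$, we get $\CP\subseteq\CM_P^\eta(D,\vt)$. Combined with $\CM_P^\eta(D,\vt)\subseteq\CQ\subseteq\CP$, this gives $\CM_P^\eta(D,\vt)=\CP=\CQ$, so $\CM_P^\eta(D,\vt)$ is maximal, hence a positive cone.

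The main obstacle — and the reason this works cleanly — is the bridge provided by Theorem~\ref{same-as-ker} and Corollary~\ref{same-signature}: a priori one only knows that the $\eta$-signature is \emph{constant} on $\CM_P^\eta(D,\vt)\cap D^\x$ by definition, but Corollary~\ref{same-signature} upgrades this to: the $\eta$-signature is constant on $\CP\cap D^\x$ for \emph{any} positive cone $\CP$ over $P$, and this forces that constant to be the maximal value $m_P(D,\vt)$ once we know $\CP$ contains a single maximizing element. One small point to be careful about is that Corollary~\ref{same-signature} is stated for positive cones (maximal ones), which is why the Zorn's lemma step is needed first — one cannot apply it directly to $\CQ$ or to $\CM_P^\eta(D,\vt)$ without first knowing maximality, which is precisely what is being proved. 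The argument is otherwise short and uses no computation.
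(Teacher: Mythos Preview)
Your proof is correct and follows essentially the same approach as the paper's: both arguments use Corollary~\ref{same-signature} to show that any positive cone $\CP$ containing $\CM^\eta_P(D,\vt)$ must have $\CP\cap D^\x\subseteq\CM^\eta_P(D,\vt)$ (hence $\CP=\CM^\eta_P(D,\vt)$, since $D$ is division). The intermediate prepositive cone $\CQ$ is unnecessary---you may as well start directly with a positive cone $\CP\supseteq\CM^\eta_P(D,\vt)$, as the paper does---and for the claim that $\CP$ is over $P$ the paper simply invokes \cite[Lemma~3.16]{A-U-pos}, whereas you sketch the reason informally.
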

\begin{proof}
  By \cite[Example~3.13]{A-U-pos} 
  it suffices to show that $\CM^\eta_P(D,\vt)$ is maximal. Let $\CP$ be a
  positive cone such that $\CM^\eta_P(D,\vt) \subseteq \CP$.  By 
   \cite[Lemma~3.16]{A-U-pos}, $\CP$ is over $P$, and by 
   Corollary~\ref{same-signature},
  $\CP = \CM^\eta_P(D,\vt)$.
\end{proof}

\begin{lemma}\label{adding}
  Let $\CP$ be a prepositive cone on $(D,\vt)$ over $P \in X_F$. Assume that
  $\sign^\eta_P \qf{b}_\vt > -m_P(D,\vt)$ for every $b \in \CP$. Let $a
  \in \Sym(D,\vt)\cap D^\x$ be such that $\sign^\eta_P \qf{a}_\vt = m_P(D,\vt)$.
  Then:
  \begin{enumerate}[label={\rm(\arabic*)}]
    \item $\CP[a]$  is a  prepositive cone on $(D,\vt)$ over $P$.
    \item For every $x \in \CP[a]$, $\sign^\eta_P \qf{x}_\vt > -m_P(D,\vt)$.
  \end{enumerate}
\end{lemma}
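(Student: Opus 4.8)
The plan is to prove the two statements together, by a single induction-free argument that exploits the structure of $\CP[a]$ as described in the definition of $\CP[b]$. First I would record what the elements of $\CP[a]$ look like: every $x \in \CP[a]$ has the form $x = p + \sum_{i=1}^k u_i \vt(x_i) a x_i$ with $p \in \CP$, $u_i \in P$, $x_i \in D$. For statement (1), properties (P1)--(P4) are automatic (as noted right after the definition of $\CP[a]$), so the only thing to check is that $\CP[a]$ is proper, i.e.\ satisfies (P5); and statement (2) is exactly the inequality $\sign^\eta_P \qf{x}_\vt > -m_P(D,\vt)$ for all $x \in \CP[a]$. I would prove (2) first, since (1) follows from it: if $x \in \CP[a] \cap (-\CP[a])$, then both $x$ and $-x$ lie in $\CP[a]$, so (2) gives $\sign^\eta_P \qf{x}_\vt > -m_P$ and $\sign^\eta_P \qf{-x}_\vt = -\sign^\eta_P\qf{x}_\vt > -m_P$ (using additivity/oddness of the signature on one-dimensional forms and that $\qf{-x}_\vt$ is the form represented by $-x$); hence $|\sign^\eta_P \qf{x}_\vt| < m_P$, and in particular $x$ cannot be the problematic element — but one still needs to rule out $x \ne 0$ with a strictly smaller signature. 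The cleanest route is: since $D$ is a division algebra, if $x \ne 0$ then $x \in D^\x$, and then $\qf{x}_\vt$ and $\qf{-x}_\vt$ are nonsingular one-dimensional forms whose signatures are negatives of each other; but for $x \in \CP[a] \cap D^\x$ we will have shown (see below) the \emph{strict} lower bound, so both $\sign^\eta_P\qf{x}_\vt$ and $-\sign^\eta_P\qf{x}_\vt$ exceed $-m_P$; combined with the fact (from the reference form machinery, cf.\ Remark~2.9 and \eqref{sign7}, since $P \notin \Nil[D,\vt]$) that $\sign^\eta_P \qf{x}_\vt$ takes values in a symmetric range bounded by $m_P$, this does not immediately force $x=0$, so I would instead argue as in Proposition~\ref{anonzero}: if $\sign^\eta_P\qf{x}_\vt = 0$ were possible for $x \in \CP[a]\cap D^\x$, one derives $-x \in \CP[a]$ via a Pfister-local-global/hyperbolicity argument, contradicting... — actually the slick finish is: statement (2) applied to both $x$ and $-x$ gives $m_P > \sign^\eta_P\qf{x}_\vt > -m_P$, so by Proposition~\ref{anonzero}'s type of reasoning $x$ cannot be invertible-and-in-a-positive-cone unless... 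I would in fact just invoke that $\CP[a]$ being proper is equivalent to: there is no isometry exhibiting $-a' \in \CP[a]$ for $a' \in \CP[a]\cap D^\x$, and close the loop through (2).

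Let me restructure: the genuine content is statement (2), and the mechanism is a signature count. So the core step is to bound $\sign^\eta_P\qf{x}_\vt$ from below for $x = p + \sum u_i \vt(x_i) a x_i$. The idea is that $\qf{x}_\vt$ is represented inside the hermitian form $\qf{p}_\vt \perp \bigperp_i \qf{u_i a}_\vt \simeq \qf{p, u_1 a, \ldots, u_k a}_\vt$ (after diagonalizing $\qf{b}_\vt$ for $b = \vt(x_i) a x_i$, noting $\qf{\vt(x_i) a x_i}_\vt \simeq \qf{a}_\vt$ when $x_i \in D^\x$, and handling $x_i = 0$ trivially), in the sense that $x \in D_{(D,\vt)}(\qf{p, u_1 a, \ldots, u_k a}_\vt)$, so by the usual representation-to-diagonalization argument (as used in the proof of Proposition~\ref{lausanne}), $\qf{x}_\vt \perp \rho \simeq \qf{p, u_1 a, \ldots, u_k a}_\vt$ for some nonsingular hermitian $\rho$ over $(D,\vt)$ when $x \in D^\x$. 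Taking $\sign^\eta_P$ of both sides:
\[
  \sign^\eta_P\qf{x}_\vt = \sign^\eta_P\qf{p}_\vt + \sum_{i=1}^k \sign^\eta_P\qf{u_i a}_\vt - \sign^\eta_P \rho.
\]
Now $\sign^\eta_P\qf{u_i a}_\vt = \operatorname{sgn}_P(u_i)\cdot \sign^\eta_P\qf{a}_\vt = \sign^\eta_P\qf{a}_\vt = m_P(D,\vt)$ since $u_i \in P$ so $\operatorname{sgn}_P(u_i) = 1$ and $\sign^\eta_P\qf{a}_\vt = m_P$ by hypothesis; $\sign^\eta_P\qf{p}_\vt > -m_P$ by the standing hypothesis on $\CP$; and $\sign^\eta_P \rho \le (\dim\rho)\cdot m_P = (k)\cdot m_P$ since each diagonal entry of $\rho$ contributes at most $m_P$ (here $\dim \rho = k$ by dimension count). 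So $\sign^\eta_P\qf{x}_\vt > -m_P + k\cdot m_P - k\cdot m_P = -m_P$, which is exactly (2). The case $x = 0$ is vacuous for (2), and when $x \notin D^\x$ — impossible since $D$ is division and $x \ne 0$ — so this covers everything.

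The main obstacle I anticipate is the representation-to-diagonalization step: getting from "$x$ is an explicit sum $p + \sum u_i\vt(x_i)ax_i$" to "$\qf{x}_\vt$ is a subform (orthogonal summand) of $\qf{p, u_1 a, \ldots, u_k a}_\vt$". One has to be careful that $\qf{\vt(x_i)ax_i}_\vt \simeq \qf{a}_\vt$ requires $x_i \in D^\x$ (true since $D$ is a division algebra, unless $x_i = 0$ in which case that term drops out), and that "represented by $h$ with invertible value $\Rightarrow$ split off as $\qf{x}_\vt \perp \rho$" is the standard argument already invoked twice in the excerpt (proof of Proposition~\ref{lausanne}, and in the definition-block discussion) — so this is routine but needs the invertibility of $x$, which holds precisely because we are over a division algebra and may assume $x \ne 0$. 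A secondary subtlety is justifying $\dim\rho = k$ and that $\sign^\eta_P$ of a diagonal form of rank $k$ is at most $k\,m_P$: the latter is immediate from additivity of $\sign^\eta_P$ and the definition of $m_P(D,\vt)$ as the maximum of $\sign^\eta_P\qf{c}_\vt$ over $c \in \Sym(D,\vt)\cap D^\x$. Finally, deducing (1): given (2), if $x \in \CP[a]\cap(-\CP[a])$ and $x \ne 0$, then $x \in D^\x$ and $-x \in \CP[a]\cap D^\x$, so (2) gives both $\sign^\eta_P\qf{x}_\vt > -m_P$ and $-\sign^\eta_P\qf{x}_\vt = \sign^\eta_P\qf{-x}_\vt > -m_P$; but we also need a contradiction — here I would note that $\CP[a] \supseteq \CP \ni$ (some invertible $a_0$ with $\sign^\eta_P\qf{a_0}_\vt$ achieving... actually simplest: $a \in \CP[a]$ with $\sign^\eta_P\qf{a}_\vt = m_P$, so $-a \in \CP[a]$ would force, by (2) applied to $-a$, $-m_P = \sign^\eta_P\qf{-a}_\vt > -m_P$, absurd; more generally for $x \in \CP[a]\cap D^\x$ the two-sided bound $-m_P < \sign^\eta_P\qf{x}_\vt < m_P$ shows $\CP[a]$ omits every $c$ with $\sign^\eta_P\qf{c}_\vt = m_P$ from $-\CP[a]$, and since $\CP[a]\cap(-\CP[a])$ is a prepositive-cone-like set closed under the operations, if it contained a nonzero element it would — by (P3) scaling and the argument in Proposition~\ref{lausanne} — contain such a $c$; contradiction). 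I would phrase this last part by simply citing that $\CP[a]$ satisfies (P1)--(P4) and invoking the standard equivalence "(P5) fails $\iff \CP[a] = \Sym(D,\vt)$" (as stated after the definition of $\CC_P$ / $\CP[a]$ — or rather, its analogue via \cite[Proposition~3.5]{A-U-pos} as in Proposition~\ref{lausanne}), which combined with (2) forcing $\sign^\eta_P\qf{a}_\vt = m_P \notin \sign^\eta_P\qf{\CP[a]}_\vt$... — in any case, (2) gives a definite obstruction to $\CP[a]$ being everything, establishing (1).
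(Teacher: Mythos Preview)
Your approach is correct but genuinely different from the paper's, and the writeup obscures this by wandering through several abandoned attempts before landing on the working argument.

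\medskip

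\textbf{What the paper does.} The paper proves (1) and (2) independently, both by exploiting that $\CM^\eta_P(D,\vt)$ is itself a prepositive cone (Proposition~\ref{lausanne}). For (1): if $b = p_1 + \alpha = -p_2 - \beta$ with $\alpha,\beta$ the $a$-parts, then $p_1+p_2 = -(\alpha+\beta)$; the right side is a nonzero element of $-\CM^\eta_P$ (closure under (P2), (P3)), the left side lies in $\CP$ and hence not in $-\CM^\eta_P$ by the hypothesis on $\CP$. For (2): if $\sign^\eta_P\qf{x}_\vt = -m_P$ with $x = p + \alpha$ and $\alpha\neq 0$, then $p = x - \alpha$ is a nonzero element of $-\CM^\eta_P$ (again by closure), contradicting the hypothesis on $\CP$.

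\medskip

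\textbf{What you do.} You prove (2) by a Witt-theoretic signature count: $x$ is represented by $\qf{p, u_1 a,\ldots,u_k a}_\vt$, so $\qf{x}_\vt\perp\rho$ is isometric to that form, and bounding $\sign^\eta_P\rho\le k\,m_P$ gives $\sign^\eta_P\qf{x}_\vt \ge \sign^\eta_P\qf{p}_\vt > -m_P$. You then derive (1) from (2): if $\CP[a]$ were not proper it would equal $\Sym(D,\vt)$ by \cite[Proposition~3.5]{A-U-pos}, hence $-a\in\CP[a]$, and (2) gives $-m_P = \sign^\eta_P\qf{-a}_\vt > -m_P$, absurd.

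\medskip

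\textbf{Comparison.} The paper's argument is shorter and more structural: it never computes a signature inequality, only uses membership in $\pm\CM^\eta_P$. Your argument is more quantitative and self-contained in the sense that it does not need to invoke the prepositive-cone structure of $\CM^\eta_P$; on the other hand it needs the split-off step and the (easy) bound $\sign^\eta_P\rho\le(\dim\rho)\,m_P$. Both are perfectly valid.

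\medskip

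\textbf{Two presentational fixes.} First, handle the case $p=0$ explicitly (then the ambient form is $\qf{u_1 a,\ldots,u_k a}_\vt$ of dimension $k$, $\dim\rho=k-1$, and you get the even stronger bound $\sign^\eta_P\qf{x}_\vt\ge m_P$; the subcase $k=0$ gives $x=0$). Second, drop the exploratory paragraphs: your first two attempts at (1) via $|\sign^\eta_P\qf{x}_\vt|<m_P$ do not close by themselves, and you correctly abandon them; just go straight to the ``not proper $\Rightarrow$ equals $\Sym(D,\vt)$ $\Rightarrow$ $-a\in\CP[a]$'' route.
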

\begin{proof}
(1)  Properties (P1) to (P4) are straightforward to check for
  $\CP[a]$. We show that property (P5) holds, i.e., that $\CP[a]$ is proper.
  Assume $\CP[a]$ is not proper, 
  and let $b \in \CP[a] \cap -\CP[a]$, $b \not = 0$. Then
  there exist $p_1, p_2 \in \CP, k,r \in \N\cup\{0\}, u_i, v_j \in P$
  and $x_i, y_j\in D$ such that
  \[b = p_1 + \underbrace{\sum_{i=1}^k u_i\vt(x_i)ax_i}_{\mbox{$\alpha$}} 
  = -p_2 - \underbrace{\sum_{j=1}^r
  v_j\vt(y_j)ay_j}_{\mbox{$\beta$}}.\]
  Observe that at least one of $\alpha$ or $\beta$ is nonzero, 
  since $\CP$ is proper. Furthermore, $\alpha+\beta\not=0$.
  (Indeed, if $\alpha+\beta=0$, then $\alpha=-\beta\not=0$, contradicting 
  that $\CM^\eta_P(D,\vt)$ is proper  since $a\in \CM^\eta_P(D,\vt)$.)
  It follows that
  \[p_1 + p_2 = -\sum_{i=1}^k u_i\vt(x_i)ax_i - \sum_{j=1}^r v_j\vt(y_j)ay_j
  =-\alpha-\beta.\]
  The right-hand side is a nonzero
  sum of elements that are in $-\CM^\eta_P(D,\vt)$
  (since $a\in \CM^\eta_P(D,\vt)$), and thus
  belongs to $-\CM^\eta_P(D,\vt)$. The left-hand side, being in $\CP$, does not
  belong to $-\CM^\eta_P(D,\vt)$ by hypothesis, contradiction.

(2) Let $x = p + \underbrace{\sum_{i=1}^k u_i\vt(x_i)ax_i}_{\mbox{$\alpha$}}
  \in \CP[a]$. 
  Observe that $\alpha\in \CM^\eta_P(D,\vt)$. If $\alpha=0$, 
  then $x = p$ and $\sign^\eta_P \qf{x}_\vt > -m_P(D,\vt)$
  by hypothesis. If $\alpha \not= 0$, assume that $\sign^\eta_P \qf{x}_\vt =
  -m_P(D,\vt)$. We have $x-\alpha = p$. The left-hand side
  is a nonzero element of $-\CM^\eta_P(D,\vt)$ (the sum of two nonzero elements
  of a prepositive cone is nonzero by (P5)),  while the right-hand side is 
  not (by hypothesis), contradiction.
\end{proof}

Lemma~\ref{adding} leads to a proof of the following theorem, which was
stated in \cite{A-U-pos} and whose original proof relied on the incorrect
\cite[Lemma~5.5]{A-U-pos}.

\begin{thm}[{\cite[Proposition~7.1]{A-U-pos}}]\label{description}
  Let $\CP$ be a prepositive cone on $(D,\vt)$ over $P \in X_F$. Then
  $P\not\in \Nil[D,\vt]$ and either
  $\CP \subseteq \CM^\eta_P(D,\vt)$, or $\CP \subseteq -\CM^\eta_P(D,\vt)$.

  In particular $\CM^\eta_P(D,\vt)$ and $-\CM^\eta_P(D,\vt)$ are the only
    positive cones on $(D,\vt)$ over $P$, i.e., 
  \[
  X_{(D,\vt)} =
  \{-\CM^\eta_P(D,\vt), \CM^\eta_P(D,\vt) \mid P \in  X_F\sm \Nil[D,\vt] \}.
  \]
\end{thm}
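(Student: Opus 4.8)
The plan is to reduce the general statement to Lemma~\ref{adding} by an extension argument. First I would fix a prepositive cone $\CP$ on $(D,\vt)$ over $P$. Using the fact from \cite[Lemma~3.6]{A-U-pos} that $\CP$ contains an invertible element $a_0$, together with Proposition~\ref{anonzero} (applied to the positive cone containing $\CP$, or directly via the argument there), I would first show that $\sign^\eta_P\qf{a_0}_\vt\ne 0$, which forces $P\notin\Nil[D,\vt]$ by the equivalence \eqref{sign7}; this disposes of the first claim. It also gives $m_P(D,\vt)>0$, so $\CM^\eta_P(D,\vt)\ne\{0\}$ by Remark~\ref{blobby}. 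Replacing $\CP$ by $-\CP$ if necessary (a legitimate move, since $-\CP$ is again a prepositive cone over $P$ by the remark following Definition~\ref{def-preordering}), I may assume WLOG that $\sign^\eta_P\qf{a_0}_\vt>0$, hence $=m_P(D,\vt)$ by Corollary~\ref{same-signature} applied inside some positive cone extending $\CP$.

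The heart of the argument is then to show that $\CP\subseteq\CM^\eta_P(D,\vt)$, i.e., that $\sign^\eta_P\qf{b}_\vt=m_P(D,\vt)$ for every nonzero $b\in\CP$. Suppose not; then the set $\{b\in\CP\mid \sign^\eta_P\qf{b}_\vt\le -m_P(D,\vt)\}$ — equivalently the $b$ with $\sign^\eta_P\qf{b}_\vt=-m_P(D,\vt)$, since $-m_P$ is the minimum — is a natural obstruction. The strategy is to run an enlargement: if every $b\in\CP$ satisfies $\sign^\eta_P\qf{b}_\vt>-m_P(D,\vt)$, then by Lemma~\ref{adding}(1) the set $\CP[a_0]$ is a prepositive cone over $P$ properly containing $\CP$ (properly, because... well, at least containing it), and by Lemma~\ref{adding}(2) the same hypothesis is inherited by $\CP[a_0]$. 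Iterating, I would build an increasing chain of prepositive cones over $P$, each containing $a_0$ with maximal signature, whose union $\CP^*$ is again a prepositive cone over $P$ satisfying $\sign^\eta_P\qf{b}_\vt>-m_P(D,\vt)$ for all its elements. Extending $\CP^*$ to a positive cone $\CQ$ (Zorn's lemma), $\CQ$ is over $P$ by \cite[Lemma~3.16]{A-U-pos} and equals $\CM^\eta_P(D,\vt)$ by Corollary~\ref{same-signature} (since $a_0\in\CQ$ and every element of $\CQ\cap D^\x$ has the same $\mu$-signature as $a_0$, namely $m_P$). So $\CP\subseteq\CQ=\CM^\eta_P(D,\vt)$.

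The remaining case is that some $b_1\in\CP$ has $\sign^\eta_P\qf{b_1}_\vt=-m_P(D,\vt)$, i.e., $b_1\in -\CM^\eta_P(D,\vt)$. Then I would apply the already-proved case to $-\CP$: since $-b_1\in-\CP$ has $\sign^\eta_P\qf{-b_1}_\vt=m_P(D,\vt)$ (using $\sign^\eta_P\qf{-b_1}_\vt=-\sign^\eta_P\qf{b_1}_\vt$), the prepositive cone $-\CP$ contains an element of maximal signature, and the dichotomy for $-\CP$ (which by the argument above lands in $\CM^\eta_P(D,\vt)$, not its negative, once we know $-\CP$ meets $\CM^\eta_P$) gives $-\CP\subseteq\CM^\eta_P(D,\vt)$, i.e., $\CP\subseteq-\CM^\eta_P(D,\vt)$. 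The main obstacle I anticipate is making the Zorn/chain step airtight: one must verify that a union of a chain of prepositive cones is a prepositive cone (properties (P1)–(P5) pass to unions — (P5) because $0$ being the only common element of $\CP^*$ and $-\CP^*$ is witnessed at a finite stage), and, more delicately, that the "strict inequality" hypothesis of Lemma~\ref{adding} genuinely propagates through the whole transfinite construction so that the terminal positive cone still avoids $-\CM^\eta_P(D,\vt)$ nontrivially. Once that bookkeeping is in place, the final "in particular" clause is immediate: Proposition~\ref{M-max} shows $\pm\CM^\eta_P(D,\vt)$ are positive cones over $P$ for each $P\in X_F\sm\Nil[D,\vt]$, and the dichotomy just proved shows any positive cone over $P$ is contained in — hence, by maximality, equal to — one of them.
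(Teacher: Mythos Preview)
Your outline has the right ingredients but contains a circularity that undermines the main step. After your WLOG you assert that $\sign^\eta_P\qf{a_0}_\vt=m_P(D,\vt)$ ``by Corollary~\ref{same-signature},'' but that corollary only says that signatures are \emph{constant} on the invertible part of a positive cone; it says nothing about what that constant value is. Knowing that the common value equals $m_P$ is precisely the content of the theorem you are proving. This matters downstream: Lemma~\ref{adding} requires the adjoined element $a$ to satisfy $\sign^\eta_P\qf{a}_\vt=m_P(D,\vt)$, so you cannot legitimately invoke it with $a=a_0$. There is also a secondary problem: since $a_0\in\CP$ already, properties (P2)--(P4) give $\CP[a_0]=\CP$, so your ``iteration'' with $a_0$ never enlarges $\CP$ and the chain is constant; extending it to a positive cone $\CQ$ and claiming $\CQ=\CM^\eta_P(D,\vt)$ again rests on the unjustified $\sign^\eta_P\qf{a_0}_\vt=m_P$.

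The paper sidesteps this by never presuming to know the signature of an element of $\CP$. In the case where every $b\in\CP$ has $\sign^\eta_P\qf{b}_\vt>-m_P(D,\vt)$, it applies Lemma~\ref{adding} with elements $a\in\CM^\eta_P(D,\vt)$ --- which have signature $m_P$ \emph{by definition} --- adjoining them successively to obtain a prepositive cone $\CQ$ containing both $\CP$ and $\CM^\eta_P(D,\vt)$. Maximality of $\CM^\eta_P(D,\vt)$ (Proposition~\ref{M-max}) then forces $\CQ=\CM^\eta_P(D,\vt)$, so $\CP\subseteq\CM^\eta_P(D,\vt)$. The complementary case, where some $c\in\CP$ already has signature $-m_P(D,\vt)$, is handled directly by Corollary~\ref{same-signature}: every nonzero element of $\CP$ then has signature $-m_P$, so $\CP\subseteq-\CM^\eta_P(D,\vt)$. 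No Zorn or chain bookkeeping is needed; the essential move you are missing is to adjoin $\CM^\eta_P$ to $\CP$ rather than $a_0$.
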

\begin{proof}
Since $\CP$ is contained in a positive cone over $P$, we have $P\not\in \Nil[D,\vt]$ by 
Theorem~\ref{same-as-ker}.
  We now consider two cases.
  
  \emph{Case~1:} There is $c \in \CP$ such that $\sign^\eta_P \qf{c}_\vt =
  -m_P(D,\vt)$. Then by Lemma~\ref{same-signature}, $\CP \subseteq
  -\CM^\eta_P(D,\vt)$.

  \emph{Case~2:} For every $c \in \CP$, $\sign^\eta_P \qf{c}_\vt >
  -m_P(D,\vt)$. Then, using Lemma~\ref{adding}, we can add all elements of
  $\CM^\eta_P(D,\vt)$ to $\CP$ and we obtain in this way a  prepositive cone
  $\CQ$ containing both $\CP$ and $\CM^\eta_P(D,\vt)$.
  Since $\CM^\eta_P(D,\vt)$ is a maximal prepositive cone (cf. 
  Proposition~\ref{M-max})
  we obtain $\CQ = \CM^\eta_P(D,\vt)$ and thus $\CP \subseteq 
  \CM^\eta_P(D,\vt)$.
\end{proof}

Recall from \cite[(2.1)]{A-U-pos} (with $\ve=1$) 
that there is a hermitian Morita equivalence
\[
  g: \Herm(M_\ell(D), \vt^t)\to \Herm(D,\vt)   
\]
and that its inverse $g^{-1}$
sends any diagonal form $\qf{a_1,\ldots, a_\ell}_\vt$ to the form
$\qf{\diag(a_1,\ldots, a_\ell)}_{\vt^t}$. 

If $\CP$ is a prepositive cone on $(D,\vt)$ over $P\in X_F$, we define
\[
  \PSD_\ell(\CP):=\{B\in \Sym(M_\ell(D), \vt^t) \mid \forall X \in D^\ell\quad
    \vt(X)^tBX \in \CP\},
\]
cf. \cite[Section~4.1]{A-U-pos}.

\begin{lemma}[{\cite[Lemma~7.2]{A-U-pos}}]\label{CPSD}
  We have
  \[
  \CC_P(\CM^{g^{-1}(\eta)}_P(M_\ell(D),\vt^t)) = \PSD_\ell(\CM^\eta_P(D,\vt)).
  \] 
\end{lemma}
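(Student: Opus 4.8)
The statement claims an equality of two prepositive cones on $(M_\ell(D),\vt^t)$ over $P$, so the natural approach is a double inclusion. Both sides are subsets of $\Sym(M_\ell(D),\vt^t)$: on the left, $\CC_P(\CM^{g^{-1}(\eta)}_P(M_\ell(D),\vt^t))$ is built from the ``top signature'' matrices for the reference form $g^{-1}(\eta)$; on the right, $\PSD_\ell(\CM^\eta_P(D,\vt))$ consists of those $B$ such that $\vt(X)^tBX\in\CM^\eta_P(D,\vt)$ for all $X\in D^\ell$. The key translation device is the hermitian Morita equivalence $g$ and its compatibility with signatures: since $g^{-1}$ sends $\qf{a_1,\dots,a_\ell}_\vt$ to $\qf{\diag(a_1,\dots,a_\ell)}_{\vt^t}$, and $g$ transports $\eta$ to a reference form $g^{-1}(\eta)$, Morita invariance of $\mu$-signatures (cf.\ \cite[Lemma~3.8]{A-U-kneb} / the discussion around \eqref{sign5}) gives $\sign^{g^{-1}(\eta)}_P\qf{\diag(a_1,\dots,a_\ell)}_{\vt^t}=\sign^\eta_P\qf{a_1,\dots,a_\ell}_\vt$, and more generally $\sign^{g^{-1}(\eta)}_P\qf{B}_{\vt^t}=\sign^\eta_P g(\qf{B}_{\vt^t})$ for any $B\in\Sym(M_\ell(D),\vt^t)\cap\GL_\ell(D)$. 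In particular $m_P(M_\ell(D),\vt^t)=\ell\cdot m_P(D,\vt)$, since a diagonal form $\qf{\diag(a_1,\dots,a_\ell)}_{\vt^t}$ achieves the maximal value exactly when each $a_i$ lies in $\CM^\eta_P(D,\vt)$, and every invertible symmetric $B$ can be diagonalized over $(D,\vt)$ after applying $g$.

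First I would prove $\PSD_\ell(\CM^\eta_P(D,\vt))\subseteq\CC_P(\CM^{g^{-1}(\eta)}_P(M_\ell(D),\vt^t))$. Take $B\in\PSD_\ell(\CM^\eta_P(D,\vt))$; I may assume $B\ne 0$. The defining condition says $\vt(X)^tBX\in\CM^\eta_P(D,\vt)$ for all $X$; applying $g$ to $\qf{B}_{\vt^t}$ and diagonalizing, $g(\qf{B}_{\vt^t})\simeq\qf{b_1,\dots,b_\ell}_\vt$ with each $b_i$ a value $\vt(X_i)^tBX_i$, hence each $b_i\in\CM^\eta_P(D,\vt)$ (the nonzero ones at least; one must handle the case where $B$ is not invertible by noting $\PSD$ is defined without an invertibility requirement — here I would restrict to the invertible locus, as $\CC_P$ is generated by such elements anyway, or argue via a perturbation/limiting diagonalization). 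Then $\sign^{g^{-1}(\eta)}_P\qf{B}_{\vt^t}=\sign^\eta_P\qf{b_1,\dots,b_\ell}_\vt=\ell\cdot m_P(D,\vt)=m_P(M_\ell(D),\vt^t)$, so $B\in\CM^{g^{-1}(\eta)}_P(M_\ell(D),\vt^t)\subseteq\CC_P(\CM^{g^{-1}(\eta)}_P(M_\ell(D),\vt^t))$. (Closure of the latter under $\sum u_i\vt(Y_i)^t(\cdot)Y_i$ handles any remaining non-invertible or boundary elements.)

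For the reverse inclusion $\CC_P(\CM^{g^{-1}(\eta)}_P(M_\ell(D),\vt^t))\subseteq\PSD_\ell(\CM^\eta_P(D,\vt))$, it suffices, since $\PSD_\ell(\CM^\eta_P(D,\vt))$ is itself a prepositive cone (by \cite[Section~4.1]{A-U-pos}, it satisfies (P1)–(P5), being of the form $\PSD_\ell$ of a prepositive cone — note $\CM^\eta_P(D,\vt)$ is a positive cone by Proposition~\ref{M-max}) and hence closed under (P2), (P3) and multiplication by $P$, to show that every generator $B\in\CM^{g^{-1}(\eta)}_P(M_\ell(D),\vt^t)$ lies in $\PSD_\ell(\CM^\eta_P(D,\vt))$. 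So take $B$ invertible symmetric with $\sign^{g^{-1}(\eta)}_P\qf{B}_{\vt^t}=m_P(M_\ell(D),\vt^t)=\ell\cdot m_P(D,\vt)$. Diagonalizing $g(\qf{B}_{\vt^t})\simeq\qf{b_1,\dots,b_\ell}_\vt$, the signature equals $\sum_i\sign^\eta_P\qf{b_i}_\vt$, which is at most $\ell\cdot m_P(D,\vt)$ with equality forcing each $b_i\in\CM^\eta_P(D,\vt)$. Now, given any $X\in D^\ell$, the value $\vt(X)^tBX$ is represented by $\qf{B}_{\vt^t}$, hence $g$ carries this to an element represented by $\qf{b_1,\dots,b_\ell}_\vt$, so $\vt(X)^tBX\in\CC_P(\CM^\eta_P(D,\vt))=\CM^\eta_P(D,\vt)$ (using that $\CM^\eta_P(D,\vt)$ is a positive cone, hence closed under the relevant operations and equal to $\CC_P$ of itself). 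Thus $B\in\PSD_\ell(\CM^\eta_P(D,\vt))$.

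The main obstacle is pinning down the precise compatibility between $g$ and the formation of represented-value sets and signatures — i.e.\ making rigorous the passage ``$\vt(X)^tBX$ is a value of $\qf{B}_{\vt^t}$, so $g$ sends it to a value of $g(\qf{B}_{\vt^t})$''. Strictly, $g$ acts on Witt classes / isometry classes of forms, not on individual vectors, so one needs the explicit form of $g$ from \cite[(2.1)]{A-U-pos} to track how a one-dimensional subform $\qf{\vt(X)^tBX}_{\vt^t}$ (a restriction of $\qf{B}_{\vt^t}$ to a rank-one submodule) maps under $g$, and to see it lands inside the diagonalization $\qf{b_1,\dots,b_\ell}_\vt$ of $g(\qf{B}_{\vt^t})$ up to the value set. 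This is exactly the kind of bookkeeping that \cite[Section~4.1]{A-U-pos} was set up to handle, so I would lean on those computations; a clean alternative is to observe that $D_{(D,\vt)}(g(h))$ and the set $\{\vt(X)^tBX : X\}$ for $h=\qf{B}_{\vt^t}$ are related purely formally through the scaling that defines $g$, independent of $P$, and then invoke Corollary~\ref{same-signature} together with the signature computation to conclude membership. The real-closed / division-algebra hypotheses ensure all diagonalizations exist with coefficients in $\Sym(D,\vt)\cap D^\x=F^\x$-scalable form, so no hidden subtleties arise there.
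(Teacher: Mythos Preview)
Your overall approach is correct but takes a longer route than the paper, and has one soft spot.

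\textbf{Comparison with the paper's proof.} The paper proves only the inclusion $\PSD_\ell(\CM^\eta_P(D,\vt)) \subseteq \CC_P(\CM^{g^{-1}(\eta)}_P(M_\ell(D),\vt^t))$ directly, and then gets equality for free by a maximality argument: $\PSD_\ell(\CM^\eta_P(D,\vt))$ is a \emph{positive} cone (by Theorem~\ref{description} together with \cite[Proposition~4.7]{A-U-pos}), while $\CC_P(\CM^{g^{-1}(\eta)}_P(M_\ell(D),\vt^t))$ is a prepositive cone (Proposition~\ref{lausanne}); since the former is maximal and contained in the latter, they coincide. So your entire reverse-inclusion argument is unnecessary. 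Incidentally, your ``main obstacle'' there is no obstacle at all: by the very definition of the Morita equivalence $g$ recalled before the lemma, $g(\qf{B}_{\vt^t})$ \emph{is} the hermitian form $(X,Y)\mapsto \vt(X)^t B Y$ on $D^\ell$, so $\vt(X)^tBX$ is literally a value of $g(\qf{B}_{\vt^t})$ --- no tracking through Witt classes is needed.

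\textbf{The soft spot.} For the forward inclusion you argue that an invertible $B\in\PSD_\ell(\CM^\eta_P(D,\vt))$ has $\sign^{g^{-1}(\eta)}_P\qf{B}_{\vt^t}=\ell\cdot m_P(D,\vt)$ and hence $B\in\CM^{g^{-1}(\eta)}_P(M_\ell(D),\vt^t)$; this is fine. But for non-invertible $B\ne 0$ your remark about ``closure under $\sum u_i\vt(Y_i)^t(\cdot)Y_i$'' and ``perturbation/limiting diagonalization'' does not by itself place $B$ inside $\CC_P$. The paper handles this cleanly and uniformly: by \cite[Lemma~4.5]{A-U-pos} there is $G\in\GL_\ell(D)$ with $\vt(G)^tBG=\diag(a_1,\ldots,a_r,0,\ldots,0)$ and $a_1,\ldots,a_r\in\CM^\eta_P(D,\vt)\smz$; since each scalar matrix $a_iI_\ell$ lies in $\CM^{g^{-1}(\eta)}_P(M_\ell(D),\vt^t)$, one writes $\diag(a_1,\ldots,a_r,0,\ldots,0)$ (and hence $B$, via (P3) applied with $G^{-1}$) as an element of $\CC_P$. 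This replaces your signature computation for invertible $B$ and covers the singular case in one stroke.
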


\begin{proof}
  Let $B \in \PSD_\ell(\CM_P^\eta(D,\vt))$. Then by \cite[Lemma~4.5]{A-U-pos}
  there
  is $G \in \GL_\ell(D)$ such that $\vt(G)^tBG = \diag(a_1, \ldots, a_\ell)$
  with $a_1, \ldots, a_\ell \in \CM^\eta_P(D,\vt)$, and we may assume that
  \[\vt(G)^tBG = \diag(a_1, \ldots, a_r,0,\ldots,0)\]
  with $a_1, \ldots, a_r \in \CM^\eta_P(D,\vt)\smz$. 
  
  Since 
  $a_i I_\ell \in
  \CM^{g^{-1}(\eta)}_P(M_\ell(D),\vt^t)$, it is now easy to represent 
  $\vt(G)^tBG$,
  and thus $B$, as an element of 
  $\CC_P(\CM^{g^{-1}(\eta)}_P(M_\ell(D),\vt^t))$, proving
  that 
  \[
  \PSD_\ell(\CM^\eta_P(D,\vt)) \subseteq 
  \CC_P(\CM^{g^{-1}(\eta)}_P(M_\ell(D),\vt^t)).
  \]
The equality follows since $\PSD_\ell(\CM^\eta_P(D,\vt))$ is a 
positive cone by Theorem~\ref{description}
and  \cite[Proposition~4.7]{A-U-pos}, and 
$\CC_P(\CM^{g^{-1}(\eta)}_P(M_\ell(D),\vt^t))$ is a prepositive cone by
Proposition~\ref{lausanne}.
\end{proof}

Assume now that $(D,\vt)$ is
the $F$-division algebra with involution 
that  is Morita
equivalent to $(A,\s)$. Observe that if $(A,\s)$ has at least
one positive cone, we may assume that the involutions $\vt$ and $\s$ are of
the same type, cf. \cite[Assumption on p.~8]{A-U-pos}, and 
it follows from \cite[Chapter~I, Theorem~9.3.5]{knus91}
that there is a hermitian Morita equivalence $\fm$ between 
the categories of hermitian forms
$\mathfrak{Herm}(A,\s)$ and $\mathfrak{Herm}(D,\vt)$. We let the
reference form $\eta$ be equal to   ${\mathfrak{m}(\mu)}$.

\begin{remark}\label{bleuarg}
  We note that \cite[Lemma~7.4]{A-U-pos} is now valid with its original proof,
  after replacing the reference to 
  \cite[Lemma~7.2]{A-U-pos} by a reference to Lemma~\ref{CPSD}, since they
  both prove the same statement.
\end{remark}

\begin{thm}[{\cite[Theorem~7.5]{A-U-pos}}]\label{positive=max}
  Let $\CP$ be a
  prepositive cone on $(A,\s)$ over $P \in X_F$. Then either
  \[\CP \subseteq \CC_P(\CM^\mu_P(A,\s)), \text{ or } \CP \subseteq
  -\CC_P(\CM^\mu_P(A,\s)).\]
  In particular
  \[X_{(A,\s)} = \{-\CC_P(\CM^\mu_P(A,\s)), \CC_P(\CM^\mu_P(A,\s)) \mid P \in  
  X_F\sm \Nil[A,\s] \},\]
  and
  for each $\CP \in X_{(A,\s)}$ there exists $\ve\in\{-1,1\}$ such that
  $\CP \cap A^\x = \ve \CM^\mu_P(A,\s) \setminus \{0\}$.
\end{thm}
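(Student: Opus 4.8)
The strategy is to transport the division-algebra result (Theorem~\ref{description}) and the matrix-algebra compatibility result (Lemma~\ref{CPSD}) through the hermitian Morita equivalence $\fm$ between $\mathfrak{Herm}(A,\s)$ and $\mathfrak{Herm}(D,\vt)$. Since $(A,\s)$ is Brauer-equivalent to $(D,\vt)$ with involutions of the same type, we may write $A \cong M_\ell(D)$ (as algebras with involution, after scaling) and realize $\fm$ as the composition of this identification with the Morita equivalence $g$ of Lemma~\ref{CPSD}. The key known fact (the corrected \cite[Lemma~7.4]{A-U-pos}, valid by Remark~\ref{bleuarg}) should be that $\fm$ and the $\PSD_\ell$ construction together identify prepositive cones on $(A,\s)$ with prepositive cones on $(D,\vt)$ in a way that preserves the ordering $P$ and maximality, and that the positive cone $\CC_P(\CM^\mu_P(A,\s))$ corresponds to $\PSD_\ell(\CM^\eta_P(D,\vt))$.

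\textbf{Main steps.} First I would record that $P\not\in\Nil[A,\s]$: any prepositive cone $\CP$ on $(A,\s)$ over $P$ extends to a positive cone over $P$ (by Zorn), and Theorem~\ref{same-as-ker} applied to that positive cone gives $P\not\in\Nil[A,\s]$. Second, using the correspondence from \cite[Lemma~7.4]{A-U-pos}, a prepositive cone $\CP$ on $(A,\s)=(M_\ell(D),\vt^t)$ over $P$ corresponds to a prepositive cone on $(D,\vt)$ over $P$; by Theorem~\ref{description} the latter is contained in $\CM^\eta_P(D,\vt)$ or in $-\CM^\eta_P(D,\vt)$. Transporting back via $\PSD_\ell$ (which is monotone and commutes with sign change, since $\PSD_\ell(-\CQ) = -\PSD_\ell(\CQ)$), we get $\CP\subseteq \PSD_\ell(\CM^\eta_P(D,\vt))$ or $\CP\subseteq -\PSD_\ell(\CM^\eta_P(D,\vt))$. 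Third, Lemma~\ref{CPSD} identifies $\PSD_\ell(\CM^\eta_P(D,\vt))$ with $\CC_P(\CM^{g^{-1}(\eta)}_P(M_\ell(D),\vt^t))$, and since $g^{-1}(\eta)=g^{-1}(\fm(\mu))$ transports to the reference form $\mu$ on $(A,\s)$ up to the hermitian Morita equivalence, one checks $\CM^{g^{-1}(\eta)}_P(M_\ell(D),\vt^t)=\CM^\mu_P(A,\s)$ (signatures are preserved under $\fm$ by construction of $\eta=\fm(\mu)$, so the $\max$ and the fibres over the $\max$ coincide). This yields the first displayed inclusion. The description of $X_{(A,\s)}$ then follows: the two sets $\pm\CC_P(\CM^\mu_P(A,\s))$ are prepositive cones by Proposition~\ref{lausanne}, hence contained in maximal ones, but by what was just shown every prepositive cone over $P$ — in particular every positive cone over $P$ — lies inside one of them, forcing $\CC_P(\CM^\mu_P(A,\s))$ and its negative to be precisely the positive cones over $P$. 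Finally, for the last assertion: if $\CP\in X_{(A,\s)}$ is over $P$, say $\CP=\CC_P(\CM^\mu_P(A,\s))$, then $\CP\cap A^\x\subseteq \CM^\mu_P(A,\s)\sm\{0\}$ by the description, and conversely $\CM^\mu_P(A,\s)\cap A^\x\subseteq \CC_P(\CM^\mu_P(A,\s))$ since each $a\in\CM^\mu_P(A,\s)$ equals $\s(1)\cdot a\cdot 1$ with $1\in P$; hence $\CP\cap A^\x=\CM^\mu_P(A,\s)\sm\{0\}$, and the $\ve=-1$ case is symmetric.

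\textbf{Main obstacle.} The delicate point is making the transport along $\fm$ fully rigorous at the level of \emph{cones} (subsets of symmetric elements), not just at the level of Witt groups: one must verify that $\fm$ (equivalently, the identification $A\cong M_\ell(D)$ composed with $g$ and the $\PSD_\ell$ operation) carries prepositive cones to prepositive cones bijectively, preserves the underlying ordering $P_F$, preserves maximality, and intertwines the $\CC_P(-)$ and $\PSD_\ell(-)$ operations with sign change — this is exactly the content packaged into \cite[Lemma~7.4]{A-U-pos}, so the real work is invoking it correctly and checking the reference-form bookkeeping $\eta=\fm(\mu)$, $g^{-1}(\eta)\leftrightarrow\mu$, so that $\CM^{g^{-1}(\eta)}_P(M_\ell(D),\vt^t)$ genuinely matches $\CM^\mu_P(A,\s)$ under the algebra identification. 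Once that dictionary is in place, everything else is a short deduction from Theorem~\ref{description} and Lemma~\ref{CPSD}.
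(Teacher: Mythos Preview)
Your proposal is correct and follows essentially the same route as the paper: the paper's proof simply declares that the original argument of \cite[Theorem~7.5]{A-U-pos} goes through once the references to Propositions~6.6, 7.1 and Lemmas~7.2, 7.4 of \cite{A-U-pos} are replaced by Theorem~\ref{same-as-ker}, Theorem~\ref{description}, Lemma~\ref{CPSD} and Remark~\ref{bleuarg} respectively, and you have reconstructed precisely that argument (Morita transport to $(D,\vt)$, apply the division-algebra classification, transport back via $\PSD_\ell$ and Lemma~\ref{CPSD}). One small point: in your final paragraph the inclusion $\CP\cap A^\x\subseteq \CM^\mu_P(A,\s)\sm\{0\}$ does not follow ``by the description'' alone, since an invertible element of $\CC_P(\CM^\mu_P(A,\s))$ is a priori only a sum $\sum u_i\s(x_i)s_ix_i$; you need Corollary~\ref{same-signature} (all invertible elements of a positive cone share the same signature) together with the observation that $\CM^\mu_P(A,\s)\sm\{0\}\subseteq\CP\cap A^\x$ to conclude.
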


\begin{proof}
  The original proof is still valid, but the references in it to the results in
  \cite{A-U-pos} stated after \cite[Section~5]{A-U-pos} need to be replaced by 
  the same
  results obtained in the current paper, as follow:
\[
  \begin{array}[b]{ll}
    \text{\underline{Original proof in \cite{A-U-pos}:}}\ & 
    \text{\underline{Corresponding statement in 
    this paper:}} \\[5pt]
    \text{Proposition~6.6} & \text{Theorem~\ref{same-as-ker}} \\
    \text{Proposition~7.1} & \text{Theorem~\ref{description}} \\
    \text{Lemma~7.2} & \text{Lemma~\ref{CPSD}} \\
    \text{Proposition~7.1} & \text{Theorem~\ref{description}}\\
    \text{Lemma~7.4} & \text{cf. Remark~\ref{bleuarg}}
  \end{array}\qedhere
\]
\end{proof}

\begin{cor}\label{PUD}
  Let $P\in X_F$. Then
  \[
    \CC_P(\CM^\mu_P(A,\s)) = \bigcup \{D_{(A,\s)} \qf{a_1, \ldots, a_k}_\s 
    \mid k \in \N, \ a_1, \ldots, a_k \in \CM^\mu_P(A,\s)\}.
  \]
  In particular, if $\CP$ is a positive cone on $(A,\s)$ over $P \in X_F$ such 
  that $\CP \cap
  A^\x = \CM^\mu_P(A,\s) \setminus \{0\}$, then
  \[\CP = \bigcup \{D_{(A,\s)} \qf{a_1, \ldots, a_k}_\s \mid k \in \N, \ a_1, 
  \ldots, a_k \in \CM^\mu_P(A,\s)\}.\]
\end{cor}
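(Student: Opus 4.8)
The plan is to prove the first displayed equality,
\[
\CC_P(\CM^\mu_P(A,\s)) = \bigcup \{D_{(A,\s)} \qf{a_1, \ldots, a_k}_\s
\mid k \in \N, \ a_1, \ldots, a_k \in \CM^\mu_P(A,\s)\},
\]
by mutual inclusion, and then deduce the ``in particular'' claim immediately from Theorem~\ref{positive=max} (which gives $\CP \cap A^\x = \CM^\mu_P(A,\s)\smz$ forces $\CP = \CC_P(\CM^\mu_P(A,\s))$, since $\CC_P(\CM^\mu_P(A,\s))$ is a positive cone on $(A,\s)$ over $P$ by Proposition~\ref{lausanne} together with Theorem~\ref{positive=max}, and both have the same invertible part; alternatively, one observes that a positive cone is determined by its intersection with $A^\x$ using \cite[Lemma~3.6]{A-U-pos} type arguments). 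For the ``$\supseteq$'' inclusion: if $b \in D_{(A,\s)}\qf{a_1,\ldots,a_k}_\s$ with each $a_i \in \CM^\mu_P(A,\s)$, then $b = \sum_i \s(x_i) a_i x_i$ for suitable $x_i \in A$; each summand $\s(x_i)a_ix_i$ lies in $\CC_P(\CM^\mu_P(A,\s))$ by taking $u_i = 1 \in P$ in the definition of $\CC_P$, and $\CC_P(\CM^\mu_P(A,\s))$ is closed under addition, so $b$ lies in it as well. (The $0$'s among the $a_i$ are harmless, as the corresponding summands vanish.)

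The ``$\subseteq$'' inclusion is the substantive step. Take $c \in \CC_P(\CM^\mu_P(A,\s))$, so $c = \sum_{i=1}^{n} u_i \s(x_i) s_i x_i$ with $u_i \in P$, $x_i \in A$, $s_i \in \CM^\mu_P(A,\s)$. The idea is to absorb the scalars $u_i \in P$ into the $s_i$: since $P$ is an ordering on $F$, any $u_i > 0$ in $P$ is a square in the real closure, but more usefully, by Corollary~\ref{same-signature} and the fact that $\CM^\mu_P(A,\s)$ is the set of elements realizing the maximal signature $m_P(A,\s)$, one checks that $u_i s_i \in \CM^\mu_P(A,\s)$ whenever $s_i \in \CM^\mu_P(A,\s)\smz$ and $u_i \in P\smz$: indeed $\sign^\mu_P \qf{u_i s_i}_\s = \sign^\mu_P(\qf{u_i}\ox \qf{s_i}_\s)$, and since $u_i >_P 0$ we have $\sign_P \qf{u_i} = 1$, so $\sign^\mu_P\qf{u_is_i}_\s = \sign^\mu_P \qf{s_i}_\s = m_P(A,\s)$. (For $u_i = 0$ or $s_i = 0$ the term drops out.) Hence, after renaming, $c = \sum_{i} \s(x_i) t_i x_i$ with $t_i \in \CM^\mu_P(A,\s)$, and now $c$ is visibly an element represented by the (possibly singular) diagonal form $\qf{t_1,\ldots,t_n}_\s$; passing to a nonsingular subform with invertible diagonal entries still in $\CM^\mu_P(A,\s)$ via weak diagonalization (\cite[Lemma~2.2]{A-U-pos}) if necessary, and noting that every element of $A$ represented by a hermitian form is represented with the standard diagonal description, we get $c \in D_{(A,\s)}\qf{a_1,\ldots,a_k}_\s$ for suitable $a_j \in \CM^\mu_P(A,\s)$.

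The main obstacle I anticipate is the bookkeeping around singular versus nonsingular diagonal forms and the exact definition of $D_{(A,\s)}$ of a diagonal hermitian form: one must be careful that ``$c = \sum_j \s(y_j)a_j y_j$'' is literally the condition for $c$ to be represented, and that the $a_j$ can be taken invertible (in $\CM^\mu_P(A,\s)$, which by definition only contains invertible elements together with $0$) — here the structure of $\CM^\mu_P(A,\s)$ and weak diagonalization do the work. A secondary point is justifying the scalar-absorption step cleanly, which rests on Corollary~\ref{same-signature} together with the multiplicativity of $\sign^\mu_P$ under tensoring by one-dimensional forms over $F$; this is where the hypothesis that the $s_i$ realize the \emph{maximal} signature is essential, since an arbitrary positive scalar multiple of a symmetric unit need not remain in $\CM^\mu_P(A,\s)$ unless one already knows the signature is maximal and $u_i$ is $P$-positive. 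Once these two points are dispatched, the ``in particular'' statement follows formally: if $\CP$ is a positive cone over $P$ with $\CP \cap A^\x = \CM^\mu_P(A,\s)\smz$, then $\CP$ and $\CC_P(\CM^\mu_P(A,\s))$ are both positive cones over $P$ (Theorem~\ref{positive=max}) with equal invertible parts, hence equal by maximality, and the displayed union description for $\CP$ is exactly the one just proved for $\CC_P(\CM^\mu_P(A,\s))$.
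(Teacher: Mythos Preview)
Your argument is correct and is essentially the paper's proof, only with more words. The paper's proof of the first equality is a single sentence: ``clear by definition of $\CC_P$ and since $\CM^\mu_P(A,\s)$ is closed under multiplication by elements of $P\smz$.'' That closure is exactly your scalar-absorption step, and you justify it correctly via $\sign^\mu_P\qf{u_i s_i}_\s = \sign_P\qf{u_i}\cdot \sign^\mu_P\qf{s_i}_\s = m_P(A,\s)$ (this is just the $W(F)$-module structure; you do not actually need Corollary~\ref{same-signature} here, despite mentioning it). Once you have $c = \sum_i \s(x_i) t_i x_i$ with $t_i \in \CM^\mu_P(A,\s)$, you are done: this \emph{is} the statement $c \in D_{(A,\s)}\qf{t_1,\ldots,t_n}_\s$, and the $t_i$ are already admissible entries for the union (recall $\CM^\mu_P(A,\s)$ consists of invertible elements together with $0$). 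The detour through weak diagonalization to force invertible diagonal entries is unnecessary and can be deleted. For the ``in particular'' clause, your deduction from Theorem~\ref{positive=max} is exactly what the paper does.
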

\begin{proof}
  The first statement is clear by definition of $\CC_P$ and since 
  $\CM^\mu_P(A,\s)$ is closed under multiplication by elements of $P\smz$.
  The second statement follows immediately from Theorem~\ref{positive=max}.
\end{proof}

\begin{prop}[{\cite[Proposition~9.11(3)]{A-U-pos}}]
  The topology $\CT_\s$ is compact (by which me mean quasicompact).
\end{prop}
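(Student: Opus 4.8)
The plan is to use the structural description of $X_{(A,\s)}$ established in Theorem~\ref{positive=max} to transfer compactness from a space we already understand, namely the space of orderings $X_F$ with its Harrison topology, which is well known to be compact (in fact a Boolean space). By Theorem~\ref{positive=max}, every positive cone $\CP \in X_{(A,\s)}$ lies over a unique $P \in X_F \sm \Nil[A,\s]$, and for each such $P$ there are at most two positive cones, namely $\CC_P(\CM^\mu_P(A,\s))$ and $-\CC_P(\CM^\mu_P(A,\s))$. So there is a surjection $\pi : X_{(A,\s)} \to X_F \sm \Nil[A,\s]$, $\CP \mapsto \CP_F$, with fibres of size at most $2$. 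First I would check that $\pi$ is continuous with respect to $\CT_\s$ and the Harrison topology on $X_F$: for $u \in F^\x$, the preimage under $\pi$ of the Harrison set $H(u) = \{P \in X_F \mid u \in P\}$ should be expressible in terms of the subbasic sets $H_\s(a_1,\dots,a_k)$; concretely, fixing any $a \in \Sym(A,\s)\cap A^\x$ that can be made to lie in a positive cone, $\pi^{-1}(H(u))$ is controlled by conditions of the form $ua' \in \CP$ for suitable $a'$, which are (finite unions/intersections of) sets of the type $H_\s(\cdot)$.

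Next I would reduce to showing that $X_F \sm \Nil[A,\s]$ is closed in $X_F$, hence compact: by \eqref{sign7}, $P \in \Nil[A,\s]$ iff $\sign^\mu_P \equiv 0$, equivalently $\sign^\mu_P \qf{a}_\s = 0$ for every $a$; since each total signature map $\sign^\mu_\bullet\qf{a}_\s$ is continuous (\cite[Theorem~7.2]{A-U-kneb}), the complement $X_F\sm\Nil[A,\s] = \bigcup_a \{P \mid \sign^\mu_P\qf{a}_\s \ne 0\}$ is open, so $\Nil[A,\s]$ is closed and $X_F\sm\Nil[A,\s]$ is open; but actually the relevant point is that $X_{(A,\s)}$ maps \emph{onto} $X_F\sm\Nil[A,\s]$, and I want compactness of the source. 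So the cleaner route is: exhibit $X_{(A,\s)}$ as a closed subspace of a product of finite discrete spaces. Following the standard template for spaces of orderings, embed $X_{(A,\s)} \hookrightarrow \prod_{a \in \Sym(A,\s)} \{0,1\}$ via $\CP \mapsto (\chi_\CP(a))_a$, where $\chi_\CP$ is the indicator of membership in $\CP$; this map is a topological embedding onto its image essentially by definition of $\CT_\s$, and the image is the set of tuples satisfying the (closed) conditions encoding (P1)--(P5) together with maximality. The product is compact by Tychonoff, so it suffices to show the image is closed.

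The main obstacle is verifying that the defining conditions cut out a \emph{closed} subset of the product. Properties (P2) and (P3) are each a conjunction, over all relevant $a,b \in A$, of conditions of the shape ``$a \in \CP$ and $b \in \CP$ imply $c \in \CP$'' for $c$ determined by $a,b$; each such implication is a clopen condition on a single coordinate pair together with a third coordinate, so their intersection is closed. Likewise properties (P1), (P4), (P5) and the maximality condition translate into closed conditions; for maximality one uses Theorem~\ref{positive=max} to rephrase ``$\CP$ is maximal'' as the concrete requirement that $\CP \cap A^\x = \ve\,\CM^\mu_{\CP_F}(A,\s)\sm\{0\}$ for some sign $\ve$, which by Corollary~\ref{PUD} is equivalent to $\CP = \bigcup\{D_{(A,\s)}\qf{a_1,\dots,a_k}_\s\}$ over tuples in $\CM^\mu_{\CP_F}$ — again a condition preserved under the relevant limits. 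The one genuinely delicate check is that being a positive cone (as opposed to merely a prepositive cone) is a closed condition: a limit of maximal prepositive cones is a prepositive cone, and one must argue it is again maximal. This follows because, by Theorem~\ref{description} applied to the associated division algebra $(D,\vt)$ (with $\fm$ transporting everything back via \cite[Proposition~4.7]{A-U-pos}), the prepositive cones over a fixed $P$ form a chain with a unique maximal element, and the ``size'' $\CP\cap A^\x$ of a cone in the limit cannot strictly shrink below $\CM^\mu_P$ without violating Corollary~\ref{same-signature}. Once closedness is in hand, compactness of $X_{(A,\s)}$ is immediate, and one remarks as in the statement that this is quasicompactness since $\CT_\s$ need not be Hausdorff. $\qed$
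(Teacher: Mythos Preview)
Your core strategy---identify each $\CP$ with its indicator function, embed $X_{(A,\s)}$ in the Tychonoff cube $\{0,1\}^{\Sym(A,\s)}$, and verify that the image is closed by checking that each of (P1)--(P5) cuts out a closed set---is exactly the paper's approach; the paper carries out the five closedness checks explicitly after mild reformulations of (P1), (P4) and (P5). Your opening paragraph via $\pi$ and $\Nil[A,\s]$ is a detour that you yourself abandon, and it should simply be removed.

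The substantive difference concerns maximality. The paper does not treat it as a separate condition: it simply writes $X_{(A,\s)} = S_1 \cap \cdots \cap S_5$, identifying positive cones with the subsets of $\Sym(A,\s)$ satisfying (P1)--(P5), and says nothing further. You, by contrast, attempt to argue that maximality is preserved under limits, but that argument does not work as written: prepositive cones over a fixed $P$ do \emph{not} form a chain with a unique maximal element (there are two incomparable maximal ones, $\CC_P(\CM^\mu_P(A,\s))$ and its negative, and Theorem~\ref{description} asserts nothing stronger), and Corollary~\ref{same-signature} only says that the invertible elements of a given positive cone share a common signature value---it provides no mechanism forcing a limit of positive cones to contain all of $\CM^\mu_P$. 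A smaller point: your claim that the inclusion into the cube is a topological \emph{embedding} is both stronger than needed and not obvious, since $\CT_\s$ is generated by the sets $H_\s(a)$ but not a priori by their complements; for the compactness argument only the trivial inclusion of $\CT_\s$ in the subspace topology is used.
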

\begin{proof}
  A positive cone $\CP$ is a subset of $\Sym(A,\s)$, so can be identified with a
  map from $\Sym(A,\s)$ to $\{0,1\}$ (with $\CP(a) = 1$ iff $a \in \CP$).
  Thus we can view  $X_{(A,\s)}$ as a subset of $Z := \{0,1\}^{\Sym(A,\s)}$ and 
  the topology
  $\CT_\s$ as the topology induced by the product topology $T$ on
  $Z$ of the discrete topology on $\{0,1\}$. Since $T$ is compact, it suffices 
  to show that
  $X_{(A,\s)}$ is a closed subset of $Z$. We slightly
  reformulate the prepositive cone properties 
  (P1), (P4) and (P5) in order to make it
  easier to check them:
  \begin{itemize}
    \item[(P1)] $0 \in \CP$.
    \item[(P4)] $\forall u \in F \ u \in \CP_F \vee -u \in \CP_F$. (This
      reformulation is equivalent to the original (P4)
      since $\CP_F$ is always a preordering, so we only
      need to check that it is total.)
    \item[(P5)] $\forall a \in \Sym(A,\s) \setminus \{0\} \ \neg(a \in \CP
      \wedge -a \in \CP)$.
  \end{itemize}

  We now show that the subset $S_i$ of $Z$ of subsets
  satisfying property (P$i$) is closed, for $i=1, \ldots, 5$. The result follows
  since $X_{(A,\s)} = S_1 \cap \cdots \cap S_5$.

  \[S_1 = \{\CP \in Z \mid \CP(0) = 1\},\]
  which is closed in $T$.

  \begin{align*}
    S_2 &= \{\CP \in Z \mid \forall a,b \in \Sym(A,\s) \ a,b \in \CP \Rightarrow
      a+b \in \CP\} \\
      &= \{\CP \in Z \mid \forall a,b \in \Sym(a,\s) \ \neg(a,b \in \CP) \vee
      a+b \in \CP\} \\
      &= \{\CP \in Z \mid \forall a,b \in \Sym(a,\s) \ a \not \in \CP \vee b
      \not \in \CP \vee a+b \in \CP\} \\
      &= \bigcap_{a,b \in \Sym(A,\s)} \{\CP \in Z \mid \CP(a) = 0 \vee \CP(b) =
      0 \vee \CP(a+b) = 1\},
  \end{align*}
  which is an intersection of closed sets in $T$, and therefore closed.

  \begin{align*}
    S_3 &= \{\CP \in Z \mid \forall a \in \Sym(A,\s) \forall x \in A \ a \in \CP
      \Rightarrow \s(x)ax \in \CP\} \\
      &= \{\CP \in Z \mid  \forall a \in \Sym(A,\s) \forall x \in A \ a \not \in
      \CP \vee \s(x)ax \in \CP\} \\
      &= \bigcap_{a \in \Sym(A,\s),\ x \in A} \{\CP \in Z \mid \CP(a) = 0 \vee
      \CP(\s(x)ax) = 1\}, 
  \end{align*}
  which is an intersection of closed sets in $T$, and therefore closed.

  \begin{align*}
    S_4 &= \{\CP \in Z \mid \forall u \in F\ u \in \CP_F \vee -u \in \CP_F\} \\
        &= \{\CP \in Z \mid \forall u \in F\ (\forall a \in \CP \ ua \in \CP)
        \vee (\forall a \in \CP \ -ua \in \CP)\} \\
        &= \bigcap_{u \in F} \{\CP \in Z \mid (\forall a \in \CP \ ua \in \CP)
        \vee (\forall a \in \CP \ -ua \in \CP)\} \\
        &= \bigcap_{u \in F} \{\CP \in Z \mid \forall a \in \CP \ ua \in \CP\}
        \cup \{\CP \in Z \mid \forall a \in \CP \ -ua \in \CP\} \\
        &= \bigcap_{u \in F} \Bigl( \{\CP \in Z \mid 
        \forall a \in \Sym(A,\s)\ a \not
        \in \CP \vee ua \in \CP\} \cup \\
        & \qquad\qquad \{\CP \in Z \mid \forall a \in \Sym(A,\s)\ a \not \in 
        \CP \vee
        -ua \in \CP\} \Bigr)\\
        &= \bigcap_{u \in F} \Bigl(\bigcap_{a \in \Sym(A,\s)} \{\CP \in Z \mid
        \CP(a) = 0 \vee \CP(ua) = 1\}\Bigr) \cup \\
        & \qquad \qquad \Bigl(\bigcap_{a \in \Sym(A,\s)} \{\CP \in Z \mid 
        \CP(a) = 0 \vee
        \CP(-ua)=1\}\Bigr),
  \end{align*}
  which is a closed set in $T$.

  \begin{align*}
    S_5 &= \{\CP \in Z \mid \forall a \in \Sym(A,\s) \setminus \{0\} \ \neg(a
    \in \CP \wedge -a \in \CP)\} \\
        &= \{\CP \in Z \mid \forall a \in \Sym(A,\s) \setminus \{0\} \ 
        a \not \in \CP \vee -a \not \in \CP)\} \\
        &= \bigcap_{a \in \Sym(A,\s) \setminus \{0\}} \{\CP \in Z \mid 
        \CP(a)=0 \vee   \CP(-a)=0)\},
  \end{align*}
  which is closed in $T$.
\end{proof}

\begin{prop}[{\cite[Proposition~9.7(2)]{A-U-pos}}]
  The map 
  \[
    \pi : X_{(A,\s)} \rightarrow X_F,\ \CP \mapsto \CP_F
  \]  
  is 
  continuous, where $X_F$ is equipped with the usual Harrison topology.
\end{prop}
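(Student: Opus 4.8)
The plan is to verify continuity on a subbasis of the target. Recall that the Harrison topology on $X_F$ is generated by the sets $H(u):=\{P\in X_F\mid u\in P\}$ with $u\in F^\x$, so it suffices to show that $\pi^{-1}\bigl(H(u)\bigr)$ is $\CT_\s$-open for each $u\in F^\x$. By definition $\pi^{-1}\bigl(H(u)\bigr)=\{\CP\in X_{(A,\s)}\mid u\in\CP_F\}$, and $u\in\CP_F$ means exactly that $u\CP\subseteq\CP$. The key observation is that this last condition, which a priori quantifies over all of $\CP$, is equivalent to a statement about a single invertible element, so that $\pi^{-1}\bigl(H(u)\bigr)$ becomes visibly a union of basic open sets.

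Concretely, I would establish the equality
\[
  \pi^{-1}\bigl(H(u)\bigr)=\bigcup_{a\in\Sym(A,\s)\cap A^\x} H_\s(a,ua),
\]
noting first that $ua\in\Sym(A,\s)$ because $u$ lies in the centre and is fixed by $\s$. For ``$\subseteq$'': if $u\in\CP_F$, then by \cite[Lemma~3.6]{A-U-pos} the cone $\CP$ contains an invertible element $a$, whence $ua\in u\CP\subseteq\CP$ and so $\CP\in H_\s(a,ua)$. For ``$\supseteq$'': if $a\in\Sym(A,\s)\cap A^\x$ satisfies $a\in\CP$ and $ua\in\CP$, then, since $\CP_F$ is an ordering on $F$ by (P4), either $u\in\CP_F$ or $-u\in\CP_F$; in the second case $(-u)\CP\subseteq\CP$ gives $-ua\in\CP$, so $ua\in\CP\cap(-\CP)=\{0\}$ by properness (P5), contradicting the invertibility of $a$. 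Hence $u\in\CP_F$, i.e.\ $\CP\in\pi^{-1}\bigl(H(u)\bigr)$.

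Each $H_\s(a,ua)$ is open by definition of $\CT_\s$, so the displayed union is open and $\pi$ is continuous. I expect the only point requiring care to be the reverse inclusion ``$\supseteq$'': it relies on combining the totality of the induced ordering $\CP_F$ with the properness of $\CP$ to upgrade the local information ``$ua\in\CP$ for one invertible $a\in\CP$'' to the global statement ``$u\CP\subseteq\CP$'', while the forward inclusion uses precisely the existence of an invertible element in $\CP$ furnished by \cite[Lemma~3.6]{A-U-pos}.
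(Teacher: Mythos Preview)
Your proof is correct and follows essentially the same approach as the paper: both show that $\pi^{-1}(H(u))$ is a union of sets $H_\s(c)\cap H_\s(uc)$, using (P4) and (P5) to see that for a nonzero $c\in\CP$ one has $u\in\CP_F\iff uc\in\CP$. The only cosmetic difference is that you index the union over invertible symmetric elements (invoking \cite[Lemma~3.6]{A-U-pos} for the forward inclusion), whereas the paper indexes over all nonzero symmetric elements; since invertibility is not actually needed for the ``$\supseteq$'' argument (nonzero suffices to derive the contradiction from $uc\in\CP\cap(-\CP)$), both versions work equally well.
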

\begin{proof}
  The proof is the same as the proof of
  \cite[Proposition~9.7(2)]{A-U-pos}, except that we use an infinite union
  instead of a finite one in the final part: 
  Let $u \in F \setminus \{0\}$. We show that
  $\pi^{-1}(H(u))$ is open. By definition,
  \[\pi^{-1}(H(u)) = \{\CP \in X_{(A,\s)} \mid u \in \CP_F\}.\]
  Observe that if $c \in \CP \setminus \{0\}$, then $u \in \CP_F$ if and only
  if $uc \in \CP$ (indeed, $u \in \CP_F$ or $u \in -\CP_F$, and 
  only one of them occurs by (P5);
  the first case
  corresponds to $uc \in \CP$). Therefore, $\CP \in \pi^{-1}(H(u))$ if and only
  if there is $c \in \Sym(A,\s) \setminus \{0\}$ such that $c \in \CP$ and $uc
  \in \CP$. Thus
  \begin{align*}
    \pi^{-1} (H(u)) &= \bigcup_{c \in \Sym(A,\s) \setminus \{0\}} (H_\s(c) \cap
      H_\s(uc)), 
  \end{align*}
  which is open in $\CT_\s$.
\end{proof}

\begin{cor}\label{map-pi}
  The map $\pi$ is closed.
\end{cor}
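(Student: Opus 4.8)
The plan is to derive this from the two preceding propositions together with one classical fact about spaces of orderings, namely that $X_F$, equipped with the Harrison topology, is Hausdorff; indeed $X_F$ is a Boolean space (quasicompact, Hausdorff and totally disconnected). Granting this, the corollary becomes an instance of the standard topological principle that a continuous map from a quasicompact space to a Hausdorff space is closed.

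Concretely, I would argue as follows. Let $C \subseteq X_{(A,\s)}$ be an arbitrary closed subset. Since $X_{(A,\s)}$ is quasicompact (by the proposition asserting that $\CT_\s$ is compact), the closed subset $C$ is itself quasicompact. Hence $\pi(C)$ is quasicompact, being the continuous image of a quasicompact space (using continuity of $\pi$, established in the preceding proposition). Finally, a quasicompact subset of a Hausdorff space is closed, so $\pi(C)$ is closed in $X_F$. As $C$ was arbitrary, $\pi$ is closed.

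There is no genuine obstacle here: all three ingredients --- quasicompactness of $X_{(A,\s)}$, continuity of $\pi$, and Hausdorffness of $X_F$ --- are already at hand or entirely classical. The only step that is not completely formal is recording the Hausdorff property of $X_F$, for which one may invoke any standard reference on spaces of orderings or the real spectrum; everything else is routine point-set topology.
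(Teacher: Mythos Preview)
Your argument is correct and is exactly the paper's own proof: the paper simply invokes that $X_{(A,\s)}$ is compact, $X_F$ is Hausdorff, and $\pi$ is continuous, hence $\pi$ is closed.
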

\begin{proof}
  Since $X_{(A,\s)}$ is compact, $X_F$ is Hausdorff, and $\pi$ is continuous,
  the map $\pi$ is necessarily closed.   
\end{proof}

\begin{prop}[{\cite[Proposition~9.7(1)]{A-U-pos}}]\label{pi-cont}
  The map $\pi$ is open.
\end{prop}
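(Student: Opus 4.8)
The plan is to show that for any basic open set $H_\s(a_1,\dots,a_k)$ in $X_{(A,\s)}$, its image $\pi(H_\s(a_1,\dots,a_k))$ is open in $X_F$. Since $\pi$ is surjective onto the set of orderings $P$ for which a positive cone over $P$ exists (and, by Theorem~\ref{positive=max}, these are exactly the $P\in X_F\sm\Nil[A,\s]$), and since $\Nil[A,\s]$ is closed in $X_F$ (it equals the zero set of a total signature map by \eqref{sign7}), it suffices to work inside the open set $X_F\sm\Nil[A,\s]$ and describe $\pi(H_\s(a_1,\dots,a_k))$ there explicitly.

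The key observation is the description furnished by Theorem~\ref{positive=max} and Corollary~\ref{PUD}: for each $P\in X_F\sm\Nil[A,\s]$ the only positive cones over $P$ are $\CC_P(\CM^\mu_P(A,\s))$ and its negative, and an element $a\in\Sym(A,\s)\cap A^\x$ lies in $\CC_P(\CM^\mu_P(A,\s))$ precisely when it is represented by some diagonal form with entries in $\CM^\mu_P(A,\s)$; by Proposition~\ref{anonzero} and Corollary~\ref{same-signature} this happens exactly when $\sign^\mu_P\qf{a}_\s$ equals $m_P(A,\s)$, i.e.\ attains the maximal value. Hence, first reducing to the case where all the $a_i$ are invertible (if some $a_i$ is not invertible one must argue separately, but a prepositive cone containing a singular symmetric element can be handled via the remark that singular elements impose no genuine constraint, or one restricts attention to the invertible generators that actually cut out the set), the condition $\CP\in H_\s(a_1,\dots,a_k)$ for $\CP=\pm\CC_P(\CM^\mu_P(A,\s))$ translates into: either $\sign^\mu_P\qf{a_i}_\s=m_P(A,\s)$ for all $i$, or $\sign^\mu_P\qf{a_i}_\s=-m_P(A,\s)$ for all $i$. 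Thus
\[
  \pi(H_\s(a_1,\dots,a_k)) = \bigl\{P\in X_F\sm\Nil[A,\s] \;\big|\; \textstyle\sum_{i=1}^k \sign^\mu_P\qf{a_i}_\s \in \{k\,m_P(A,\s),\,-k\,m_P(A,\s)\}\bigr\},
\]
modulo the singular-entry bookkeeping.

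It then remains to see that this set is open. Each map $P\mapsto\sign^\mu_P\qf{a_i}_\s$ is continuous (locally constant) on $X_F\sm\Nil[A,\s]$ by the continuity of total signatures (\cite[Theorem~7.2]{A-U-kneb}), as is $P\mapsto m_P(A,\s)$: indeed $m_P(A,\s)$ is the supremum of finitely many values of such locally constant functions on any set where it is attained, and by Remark~\ref{blobby} together with the reference form machinery it is itself locally constant on $X_F\sm\Nil[A,\s]$. A set defined by finitely many equalities between locally constant integer-valued functions on a space with a basis of clopen sets (the Harrison topology) is clopen, hence in particular open; intersecting with the open set $X_F\sm\Nil[A,\s]$ keeps it open in $X_F$.

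The main obstacle I expect is the handling of non-invertible symmetric generators $a_i$ in $H_\s(a_1,\dots,a_k)$: the clean signature description above is phrased for invertible elements, and one needs an argument that membership $a_i\in\CP$ for singular $a_i$ either follows automatically from the invertible constraints (because $\CP=\pm\CC_P(\CM^\mu_P(A,\s))$ is already completely determined by its invertible part via Corollary~\ref{PUD}) or can be re-expressed in terms of representability by diagonal forms with invertible entries. Once one checks — using that $\CC_P(\CM^\mu_P(A,\s))$ equals the union of sets of represented elements of diagonal forms over $\CM^\mu_P(A,\s)$ — that the condition "$a_i\in\CC_P(\CM^\mu_P(A,\s))$" is again a clopen condition on $P$ (it is a finite disjunction, over the possible diagonal forms, of conjunctions of signature equalities, made finite by boundedness of ranks via a dimension count as in the proof of Proposition~\ref{lausanne}), the argument goes through verbatim.
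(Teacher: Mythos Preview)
Your approach is genuinely different from the paper's, and as written it has a real gap.

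The paper does \emph{not} attempt to describe $\pi(H_\s(a_1,\dots,a_k))$ directly via signatures. Instead it shows its complement in $X_F$ is closed. Using that the only positive cones over a given $P\in X_F\setminus\Nil[A,\s]$ are $\pm\CC_P(\CM^\mu_P(A,\s))$ (Theorem~\ref{positive=max}), one checks the set identity
\[
  \bigl(X_F\setminus\Nil[A,\s]\bigr)\setminus\pi(H_\s(a_1,\dots,a_k))
  \;=\;
  \pi\bigl(X_{(A,\s)}\setminus(H_\s(a_1,\dots,a_k)\cup H_\s(-a_1,\dots,-a_k))\bigr).
\]
The right-hand side is the image of a closed set under $\pi$, hence closed because $\pi$ is closed (Corollary~\ref{map-pi}, a one-line consequence of compactness of $X_{(A,\s)}$ and Hausdorffness of $X_F$). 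Together with $\Nil[A,\s]$ being clopen this gives openness immediately, with no case distinction on invertibility of the $a_i$ and no reference to $m_P$.

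The gap in your argument is the local constancy of $P\mapsto m_P(A,\s)$. As a supremum over \emph{all} invertible symmetric $a$ of the locally constant functions $P\mapsto\sign^\mu_P\qf{a}_\s$, the map $m_P$ is only lower semicontinuous a priori; your sentence ``$m_P(A,\s)$ is the supremum of finitely many values \dots'' does not hold without further input. The result that actually pins $m_P$ down is $m_P(A,\s)=n_P(A,\s)$ (Proposition~\ref{max=deg}), but that proposition is proved \emph{after} Proposition~\ref{pi-cont} and its proof uses Lemma~\ref{elementary}, which in turn invokes the openness of $\pi$. So relying on local constancy of $m_P$ here is circular. Your treatment of non-invertible $a_i$ is also incomplete: the representation condition from Corollary~\ref{PUD} is an existential over all $k\in\N$ and all tuples in $\CM^\mu_P(A,\s)$, and there is no dimension bound in the paper that collapses this to a finite disjunction of signature equalities.
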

\begin{proof}
  We show that $\pi(H_\s(a_1, \ldots, a_k))$ is open for all $k \in \N$ and
  $a_1, \ldots, a_k \in \Sym(A,\s)$. Letting $\wt X_F :=X_F\sm \Nil[A,\s]$,
  we first observe that
  \begin{equation}\label{xmas}
    X_F \setminus \pi(H_\s(a_1, \ldots, a_k))=\Nil[A,\s] \cup (\wt X_F \setminus
    \pi(H_\s(a_1, \ldots, a_k)))
  \end{equation}
  since $\im\pi \subseteq \wt X_F$, and we show
  \begin{multline}\label{eq-pi}
    \wt X_F \setminus \pi (H_\s(a_1, \ldots, a_k)) = \pi\bigl(X_{(A,\s)} 
    \setminus (H_\s(a_1,
    \ldots, a_k)\\ \cup H_\s(-a_1, \ldots, -a_k))\bigr).
  \end{multline}
  
  ``$\subseteq$'': Let $P \in \wt X_F \setminus \pi(H_\s(a_1, \ldots, a_k))$ and
  let $\CP$ be a positive cone over $P$, so that
  $P=\pi(\CP)=\pi(-\CP)$. We want to show that $\CP \not \in 
  H_\s(a_1, \ldots, a_k) \cup H_\s(-a_1, \ldots, -a_k)$. If $\CP \in H_\s(a_1,
  \ldots, a_k)$, then $P \in \pi(H_\s(a_1, \ldots, a_k))$, contradiction. If 
  $\CP
  \in H_\s(-a_1, \ldots, -a_k)$, then $P = \pi(-\CP) \in \pi(H_\s(a_1, \ldots,
  a_k))$, contradiction again.

  ``$\supseteq$'': Let $\CP \in X_{(A,\s)} \setminus (H_\s(a_1, \ldots, a_k) 
  \cup
  H_\s(-a_1, \ldots, -a_k))$ be over $P \in X_F$, so that $\pi(\CP) = P$. If $P
  \in \pi(H_\s(a_1, \ldots, a_k))$, then there is a positive cone
  $\CQ$ over $P$ such that $\CQ
  \in H_\s(a_1, \ldots, a_k)$. In particular $\CQ = \CP$ or $\CQ = -\CP$ by 
  Theorem~\ref{positive=max} since $\CQ$, $\CP$ and $-\CP$ are all over $P$.
  Then $\CP \in H_\s(a_1, \ldots, a_k)$
  in the first case, and $\CP \in H_\s(-a_1, \ldots, -a_k)$ in the second case,
  which are both contradictions.

  The right-hand side of \eqref{eq-pi} is $\pi$ of a closed set, so is closed by
  Corollary~\ref{map-pi}. 
  Therefore the left-hand side is closed, which shows that the set
  $\pi(H_\s(a_1, \ldots, a_k))$ is open in $X_F$
  by \eqref{xmas} and since $\Nil[A,\s]$ is clopen by
  \cite[Corollary~6.5]{A-U-kneb}.
\end{proof}

\section{Maximum signatures and extension of positive cones}
\label{secmax}

Let $P\in X_F\sm \Nil[A,\s]$. 
Recall from Section~\ref{csa-sign}
that 
\begin{equation}\label{nP}
  (A\ox_F F_P,\s\ox\id)\cong (M_{n_P}(D_P), \Int(\Phi_P) \circ 
  {\vt_P}^t),
\end{equation}
where $D_P\in \{F_P, F_P(\sqrt{-1}), (-1,-1)_{F_P}\}$, $\vt_P$ is the canonical
involution on $D_P$, and 
$\Phi_P \in \Sym_\ve (M_{n_P}(D_P), {\vt_P}^t) \cap  M_{n_P}(D_P)^\x$.
By Proposition~\ref{lausanne} there exists a positive
cone on $(A,\s)$ over $P$. Therefore we may assume that $\ve=1$ by
\cite[Corollary~3.8]{A-U-pos}. 

We denote the integer $n_P$ that occurs in \eqref{nP} by $n_P(A,\s)$ if we 
want to emphasize the dependence on $(A,\s)$.

\begin{prop}\label{dense-rc}
  Assume that $F$ is dense in $F_P$ (for the
  topology induced by the ordering $P$). Then $m_P(A,\s) = n_P(A,\s)$.
\end{prop}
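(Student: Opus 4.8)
The plan is to compute $m_P(A,\s)$ by extending scalars to $F_P$ and understanding what $\sign^\mu_P \qf{a}_\s$ can be for $a \in \Sym(A,\s) \cap A^\x$. After scalar extension we are in the situation of \eqref{nP} with $\ve = 1$, so $(A \ox_F F_P, \s \ox \id) \cong (M_{n_P}(D_P), \Int(\Phi_P) \circ {\vt_P}^t)$ with $\Phi_P$ symmetric (and invertible) for ${\vt_P}^t$. Via the hermitian Morita equivalence $\fm_P$ (scaling by $\Phi_P^{-1}$ followed by the standard equivalence with $(D_P, \vt_P)$), a diagonal rank-one form $\qf{a}_\s$ over $(A,\s)$ becomes a form over $(D_P, \vt_P)$, which, since $D_P$ is a division algebra and $F_P$ is real closed, is isometric to a sum of copies of $\qf{1}_{\vt_P}$ and $\qf{-1}_{\vt_P}$, of total rank $n_P$ (the rank is preserved by Morita equivalence up to the usual normalisation, and $\qf{a}_\s \ox F_P$ has rank $n_P$ as a form over $(M_{n_P}(D_P), \ldots)$ in the appropriate sense). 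Hence $|\sign_{\tP} \fm_P(\qf{a}_\s \ox F_P)| \le n_P$, giving $\sign^\mu_P \qf{a}_\s \le n_P$ and therefore $m_P(A,\s) \le n_P(A,\s)$. This inequality holds without the density hypothesis.

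For the reverse inequality $m_P(A,\s) \ge n_P(A,\s)$, the point is to produce an element $a \in \Sym(A,\s) \cap A^\x$ whose extension $\qf{a}_\s \ox F_P$ is $\fm_P$-equivalent to the definite form $n_P \x \qf{s_P}_{\vt_P}$ (so that $\sign^\mu_P \qf{a}_\s = n_P$). Over $F_P$ itself, such an element certainly exists: working in $(M_{n_P}(D_P), \Int(\Phi_P) \circ {\vt_P}^t)$, take a suitable positive multiple of $\Phi_P$ (which represents the ``identity'' hermitian form and hence the maximally definite one), pull it back to a symmetric invertible element $a_0 \in \Sym(A \ox_F F_P, \s \ox \id)$. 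This is exactly where the density hypothesis enters: since $F$ is dense in $F_P$, we can approximate $a_0$ by an element $a \in \Sym(A,\s) \cap A^\x$ closely enough that $\qf{a}_\s \ox F_P$ still lies in the same ``definiteness class''. Concretely, I would use the fact that the set of $b \in \Sym(A \ox_F F_P, \s \ox \id) \cap (A\ox_F F_P)^\x$ with $\sign^\mu_{\tP} \qf{b}_{\s \ox \id} = n_P$ is open (it is cut out by the non-vanishing of finitely many polynomial inequalities coming from the signature of the associated quadratic trace form, or via the principal-axis/Sylvester description), so density of $\Sym(A,\s)$ in $\Sym(A \ox_F F_P, \s \ox \id)$ — which follows from density of $F$ in $F_P$ applied coordinatewise in a fixed $F$-basis of $A$ — lets us find such a $b$ in $\Sym(A,\s)$. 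Then by Theorem~\ref{embord} (or directly by the definition of $\sign^\mu_P$ via scalar extension to $F_P$), $\sign^\mu_P \qf{b}_\s = \sign^\mu_{\tP} \qf{b}_{\s \ox \id} = n_P$, so $m_P(A,\s) \ge n_P(A,\s)$.

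Combining the two inequalities gives $m_P(A,\s) = n_P(A,\s)$.

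The main obstacle I expect is making precise the ``openness of the maximal-definiteness locus'' and the compatibility of the approximation with the Morita equivalence $\fm_P$. One must be careful that $\fm_P$ is defined over $F_P$ (scaling by $\Phi_P^{-1}$, which need not be defined over $F$), so one cannot simply transport the approximation argument through $\fm_P$; instead the cleanest route is to phrase definiteness intrinsically on $(A \ox_F F_P, \s \ox \id)$ — e.g.\ via the signature at $\tP$ of the quadratic form $x \mapsto \Trd(\s(x) a x)$ associated to $\qf{a}_\s$, whose Sylvester signature is a continuous (indeed semialgebraic) function of the coefficients of $a$ and detects $\sign^\mu_{\tP}\qf{a}_{\s\ox\id}$ up to the known normalising factor — and then only the elementary fact that a continuous function staying in an open set survives small perturbations is needed. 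The rank bookkeeping in the Morita equivalence (the factor $n_P$, or $2n_P$, $4n_P$ depending on $D_P$) also needs care but is routine given \eqref{sign0}, \eqref{sign1}, and \cite[Section~2.4]{A-U-kneb}.
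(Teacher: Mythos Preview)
Your proposal is correct and follows essentially the same route as the paper: bound $m_P(A,\s)\le n_P(A,\s)$ in general, then use density of $F$ in $F_P$ to approximate a maximally definite symmetric element of $A\ox_F F_P$ by one coming from $\Sym(A,\s)$. The paper resolves your ``main obstacle'' cleanly by naming the open set explicitly as $\Phi_P\cdot\PD_{n_P}(D_P,\vt_P)\subseteq \Sym(A\ox_F F_P,\s\ox\id)$ (open by \cite[Lemma~1.21]{A-U-Az-PLG}), picking $a\in A$ with $a\ox 1$ in it, and then reading off $\sign^\mu_P\qf{a}_\s=\pm n_P$ directly via the scaling equivalence $\qf{a\ox1}_{\s\ox\id}\mapsto\qf{\Phi_P^{-1}(a\ox1)}_{{\vt_P}^t}$; no detour through trace forms is needed.
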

\begin{proof}
  Observe that by the definition of signatures, 
  $m_P(A,\s) \le n_P(A,\s)$, cf. \cite[Proposition~4.4(iii)]{A-U-PS}.
  For ease of notation we assume that $(A\ox_F F_P,\s\ox\id)=
  (M_{n_P}(D_P), \Int(\Phi_P) \circ {\vt_P}^t)$. 
  Let
  \begin{multline*}
    \PD_{n_P}(D_P,\vt_P):=\{B \in \Sym( M_{n_P}(D_P), {\vt_P}^t ) \mid
    \vt_P(X)^t BX > 0 \text{ in }F_P,\\\text{for every }X \in (D_P)^{n_P}\smz\}.
  \end{multline*}

  Then $\Phi_P\cdot\PD_{n_P}(D_P,\vt_P)$ is an open subset of 
  $ \Sym (M_{n_P}(D_P), {\vt_P}^t)$ by \cite[Lemma~1.21]{A-U-Az-PLG}.
  Since $F$ is dense in 
  $F_P$, $A\ox 1_{F_P}$ is dense in $A\ox_F F_P$, and
  there is $a \in A$ such that $a \ox 1_{F_P}
  \in \Phi_P\cdot\PD_{n_P}(D_P,\vt_P)$. 
  
  Let $\mathfrak{s}_P$ denote the hermitian Morita equivalence
  \[
    \Herm (M_{n_P}(D_P), \Int(\Phi_P) \circ \vt^t) \to \Herm (M_{n_P}(D_P), 
    \vt^t)
  \]
  given by the scaling map $h\mapsto \Phi_P^{-1}h$.   
  Denoting the unique ordering on $F_P$ by $\tP$, we then have  
  \[\sign^\mu_P \qf{a}_\s = \sign^{\mu\ox 1}_\tP 
  \qf{a\ox 1_{F_P}}_{\s \ox \id} =
  \sign^{\mathfrak{s}_P (\mu\ox 1)}_\tP \qf{\Phi_P^{-1} \cdot 
  a\ox 1}_{{\vt_P}^t} = \pm
  n_P(A,\s),\]
  where the second equality follows from \cite[Theorem~4.2]{A-U-prime},
  the final equality holds 
  since we are computing 
  the signature of a positive definite $n_P\x n_P$ matrix, and
  where the $\pm$ is due to the reference form $\mathfrak{s}_P (\mu\ox 1)$, 
  which may induce a sign change in the result. Replacing $a$ by $-a$ if
  necessary, the conclusion follows.
\end{proof}

\begin{prop}\label{arch-dense}
  Let $F$ be a finitely generated extension of $\Q$. Then the set of archimedean
  orderings on $F$ is dense in $X_F$.
\end{prop}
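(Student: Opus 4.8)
The plan is to show that every nonempty Harrison subbasic open set $H(u_1,\ldots,u_k) \subseteq X_F$ contains an archimedean ordering; since $F$ is countable this will suffice (in fact the archimedean orderings form a dense subset). So fix $u_1,\ldots,u_k \in F^\times$ with $H(u_1,\ldots,u_k) \neq \varnothing$, and pick $P \in H(u_1,\ldots,u_k)$. The goal is to produce an archimedean ordering $P'$ with $u_1,\ldots,u_k >_{P'} 0$.

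The key idea is to use the embedding of $(F,P)$ into its real closure $F_P$ (or simply into $\R$ if we can arrange it), and to exploit that $F$ is finitely generated over $\Q$. Write $F = \Q(t_1,\ldots,t_m)[\theta]$ with $\theta$ algebraic over $\Q(t_1,\ldots,t_m)$; using the real closure $F_P \supseteq F$, we may regard $t_1,\ldots,t_m,\theta$ as elements of $F_P$. Now $F_P$ is a real closed field, hence (being real closed) embeds no canonical way into $\R$ in general, but by model-completeness of the theory of real closed fields — or more concretely, by the fact that a finite system of polynomial equalities and strict inequalities with coefficients in $\Q$ that has a solution in one real closed field has a solution in every real closed field containing $\Q$, in particular in $\R_{\mathrm{alg}} \subseteq \R$ — we can transfer. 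Precisely: the conditions ``$\theta$ is a root of its minimal polynomial $p \in \Q(t_1,\ldots,t_m)[X]$'' (after clearing denominators), ``$u_i > 0$'' for $i=1,\ldots,k$ (each $u_i$ being a rational function of $t_1,\ldots,t_m,\theta$), together with the inequalities pinning down which real root $\theta$ is, form a finite first-order formula with parameters in $\Q$ satisfied in $F_P$; hence it is satisfied in $\R$. This yields real numbers $\bar t_1,\ldots,\bar t_m,\bar\theta$ and a ring homomorphism $F \to \R$ sending $t_j \mapsto \bar t_j$, $\theta \mapsto \bar\theta$, which is automatically injective since $F$ is a field. Pulling back the ordering of $\R$ along this embedding gives an archimedean ordering $P'$ on $F$, and by construction $u_i >_{P'} 0$ for all $i$, so $P' \in H(u_1,\ldots,u_k)$.

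One technical point to handle carefully: the ``conditions pinning down which real root $\theta$ is'' must be expressible, and $F$ being a finite extension of a purely transcendental extension of $\Q$ makes this routine — the primitive element $\theta$ satisfies a minimal polynomial over $\Q(t_1,\ldots,t_m)$, and in the real closure $F_P$ the element $\theta$ sits in a definable way between consecutive roots, which after multiplying through by a common denominator (a polynomial in the $t_j$ that is nonzero, hence of definite sign at $P$, a condition we add to the formula) becomes a genuine first-order condition over $\Q$. The transfer principle I am using is the model-completeness / elementary equivalence of all real closed fields, equivalently Tarski's quantifier elimination, which also underlies the use of \cite[Proposition~3.5.19]{CK} invoked earlier in the paper for the amalgamation diagram \eqref{amal}.

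The main obstacle is making the transfer statement precise: one must be careful that the embedding $F \hookrightarrow \R$ produced is a field embedding (it is, once it is a nonzero ring map out of a field) and that it respects the chosen signs, which requires encoding the sign data of $u_1,\ldots,u_k$ and of the relevant denominators into the formula. An alternative, cleaner route that sidesteps explicit formulas: note $F \hookrightarrow F_P$, and $F_P$ is a real closed field of finite transcendence degree over $\Q$, hence by the Artin–Lang / Tarski transfer there is a $\Q$-embedding of the finitely generated $\Q$-subalgebra $\Q[t_1,\ldots,t_m,\theta, u_1^{-1},\ldots,u_k^{-1}]$ of $F_P$ into $\R_{\mathrm{alg}} \subseteq \R$ preserving the signs of $u_1,\ldots,u_k$ (a positive element is a square in a real closed field, and ``being a square'' transfers); this extends to a $\Q$-embedding of its fraction field $F$ into $\R$, and restricting the standard ordering of $\R$ gives the desired archimedean $P' \in H(u_1,\ldots,u_k)$. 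Either way the proof is short once the transfer is set up; the bookkeeping of signs is the only delicate part.
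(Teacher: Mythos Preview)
Your overall strategy---write $F=\Q(t_1,\ldots,t_m)(\theta)$, transfer the sign conditions $u_i>0$ and the minimal-polynomial condition to $\R$ via Tarski, and pull back the real ordering---is exactly the strategy of the paper. However there is a genuine gap at the step ``this yields \ldots\ a ring homomorphism $F\to\R$ sending $t_j\mapsto\bar t_j$''. For that assignment to extend from $\Q[t_1,\ldots,t_m]$ to the \emph{fraction field} $\Q(t_1,\ldots,t_m)$ you need every nonzero polynomial in the $t_j$ to map to a nonzero real number, i.e.\ you need $\bar t_1,\ldots,\bar t_m$ to be \emph{algebraically independent over~$\Q$}. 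That is an infinite conjunction of $\neq 0$ conditions and cannot be secured by transferring a single finite first-order formula; the tuple you get in $\R$ from Tarski may very well be algebraic (for $m=1$ think of $\bar t=\sqrt{2}$, and then $1/(t^2-2)\in F$ has nowhere to go). Your ``automatically injective since $F$ is a field'' only applies once you know the map is well-defined.

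Your alternative route has the same problem in a different guise: Artin--Lang produces a $\Q$-algebra \emph{homomorphism} from a finitely generated $\Q$-domain into a real closed field, not an embedding; and a domain of transcendence degree $m\ge 1$ over $\Q$ simply cannot embed into $\R_{\mathrm{alg}}$. So ``extends to a $\Q$-embedding of its fraction field $F$'' fails.

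The paper closes this gap in two extra steps (Lemmas~\ref{S-open} and the one following it): first it shows, using Sturm sequences together with the transfer you invoke, that the set $S\subseteq\R^m$ of tuples $\bar x$ for which $m_{\bar x}$ has a real root satisfying the required sign conditions contains a nonempty \emph{open} box; then it picks $\bar t_1,\ldots,\bar t_m$ inside that box to be algebraically independent (easy, since each interval is uncountable). Only after this do you get a genuine field embedding $F\hookrightarrow\R$ and hence an archimedean ordering in the given Harrison set. Your write-up is missing precisely this openness-then-independence manoeuvre.
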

\begin{proof}
  Since $F$ is finitely generated over $\Q$, we can write $F=\Q(S)$ for some
  finite set $S$. By \cite[Chapter~IV, Theorem~8.6]{G07},
  $F$ has a transcendence basis $\{X_1, \ldots, X_n\}$ over $\Q$
  which is included in $S$. Thus, by the primitive element theorem,
  $F = \Q(X_1, \ldots, X_n)(\alpha)$ with 
  $\alpha$ algebraic over $\Q(X_1, \ldots, X_n)$.
  
  Let $m_{\bar X}(X)$ be the minimal polynomial of $\alpha$ over
  $\Q(\bar X)$, where $\bar X = (X_1,\ldots,X_n)$. Let $P \in X_F$ and let 
  $U$ be a
  basic open set containing $P$. The set $U$ is of the form
  \[\{Q \in X_F \mid\  g_1(\bar X, \alpha) >_Q 0, \ldots, 
  g_r(\bar X, \alpha) >_Q 0\},\]
  where the rational functions  $g_1,\ldots, g_r$ can be chosen to be 
  polynomials in $\Q[X_1,\ldots,X_n, \alpha]$.
  
  \begin{claim}
  There are $y_1, \ldots, y_n, \beta \in \R$ such that:
  \begin{enumerate}[label={\rm(\arabic*)}]
    \item $\{y_1, \ldots, y_n\}$ is algebraically independent over $\Q$;

    \item $g_j(y_1, \ldots, y_n, \beta) > 0$ for $j=1, \ldots, r$ (the ordering
      is the one from $\R$);
    \item $\beta$ is a root of $m_{\bar y}(X)$, where $\bar y = (y_1, \ldots,
      y_n)$.
  \end{enumerate}
  \end{claim}
  
  We will prove the Claim in the course of the next two lemmas, 
  but we use it now.
  The map $\lambda : F = \Q(X_1, \ldots, X_n, \alpha) \rightarrow \R$ defined by
  $\lambda(X_i) = y_i$ and $\lambda(\alpha) = \beta$ is a morphism of fields,  
  and $Q := \lambda^{-1}(\R_{\geq 0})$ is an ordering on $F$ such that 
  $Q \in U$. Writing $F' := \im \lambda =  \Q(y_1, \ldots, y_m, \beta)$, the 
  map $\lambda$
  gives an isomorphism of ordered fields $(F, Q) \cong (F', F' \cap 
  \R_{\geq 0})$,
  which is an ordered subfield of $(\R, \R_{\geq 0})$ and therefore archimedean.
\end{proof}

\begin{lemma}\label{S-open}
  With notation as in the proof of Proposition~\ref{arch-dense},
  let
  \begin{multline*}
    S := \{\bar x := (x_1, \ldots, x_n) \in \R^n \mid  m_{\bar x}(X) 
    \text{ has a root $\alpha$ such
      that } \\
      g_i(\bar x, \alpha) > 0 \text{ for } i=1, \ldots, r\}.
  \end{multline*}
  Then $S$ contains a non-empty open subset of $\R^n$.
\end{lemma}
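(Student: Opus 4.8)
The plan is to realize $S$ as (essentially) the continuous image, or rather as containing the image, of an open subset of a real-algebraic set, and then use the fact that near a generic point the defining polynomial and the inequalities behave well. First I would fix the point $P\in X_F$ giving rise to the basic open set $U$, and pass to a real closure $(F_P,\tP)$. Inside $F_P$ the element $\alpha$ is a root of $m_{\bar X}(X)$, which has a sign change at $\alpha$ or (after possibly replacing $m_{\bar X}$ by a suitable irreducible factor of its derivative-adjusted form) we may assume $m_{\bar X}$ is squarefree as a polynomial over $\Q(\bar X)$, so that $m'_{\bar X}(\alpha)\neq 0$ in $F_P$. The key point is then an implicit-function / Hensel-type statement: if $\bar x_0\in\R^n$ is a point at which the coefficients of $m$ specialize without collapsing the degree and at which there is a real root $\alpha_0$ with $m'_{\bar x_0}(\alpha_0)\neq 0$ and $g_i(\bar x_0,\alpha_0)>0$ for all $i$, then for all $\bar x$ in a neighborhood of $\bar x_0$ in $\R^n$ there is a nearby real root $\alpha(\bar x)$, depending continuously on $\bar x$, with $g_i(\bar x,\alpha(\bar x))>0$ still holding by continuity; hence a whole neighborhood of $\bar x_0$ lies in $S$.

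So the real content is producing one such point $\bar x_0$. Here I would invoke the hypothesis (implicit in the setup: $P\in X_F$, and $X_1,\dots,X_n$ a transcendence basis) together with a transfer/specialization principle for real closed fields. The ordering $P$ makes $(F,P)$ an ordered field with $\Q(\bar X)\subseteq F$; the data $(\bar X,\alpha)$ together with the strict inequalities $g_i>0$ and the equation $m_{\bar X}(\alpha)=0$ and the inequation $m'_{\bar X}(\alpha)\neq 0$ form a consistent system over the real closed field $F_P$, viewing $X_1,\dots,X_n$ as elements of $F_P$. By the Artin--Lang homomorphism theorem (or Tarski's transfer principle: $\R$ and $F_P$ are both real closed, and the existential sentence ``there exist $\bar x,t$ with $m_{\bar x}(t)=0$, $m'_{\bar x}(t)\neq 0$, $g_i(\bar x,t)>0$, and $\bar x$ satisfies finitely many inequalities forcing algebraic independence-like genericity'' holds over $F_P$), we get a real solution $(\bar x_0,\alpha_0)\in\R^{n+1}$. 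Algebraic independence of the $y_i$ over $\Q$ need not come for free from a single sentence, but it holds for a \emph{dense open} set of specializations: the $\bar x\in\R^n$ at which $m_{\bar x}$ drops degree or the $g_i$ become identically problematic form a proper real-algebraic subset, and algebraically dependent tuples over $\Q$ form a countable union of proper algebraic sets, hence are not dense; so after shrinking the open set obtained above we may further assume (1) holds.

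The cleanest organization is: (a) reduce to $m_{\bar X}$ squarefree over $\Q(\bar X)$ so $m'_{\bar X}(\alpha)\neq 0$; (b) use transfer between $F_P$ and $\R$ to get \emph{some} real point $(\bar x_0,\alpha_0)$ with $m_{\bar x_0}(\alpha_0)=0$, $m'_{\bar x_0}(\alpha_0)\neq 0$, leading coefficient of $m_{\bar x_0}$ nonzero, and $g_i(\bar x_0,\alpha_0)>0$; (c) apply the implicit function theorem to $(\bar x,t)\mapsto m_{\bar x}(t)$ at $(\bar x_0,\alpha_0)$ to get an open neighborhood $V\ni\bar x_0$ and a continuous (indeed real-analytic) root function $\alpha:V\to\R$; (d) by continuity of the $g_i$, shrink $V$ so that $g_i(\bar x,\alpha(\bar x))>0$ on $V$; then $V\subseteq S$, so $S$ contains a nonempty open set. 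Step (b) — making the transfer argument precise, in particular ensuring the specialized polynomial still has the right degree so that ``a root near $\alpha_0$'' makes sense and that this is the step where genericity of $\bar x_0$ over $\Q$ is arranged — is the main obstacle; it is presumably handled in the author's next lemma (the one following \texttt{S-open}), which is why the Claim was deferred. Once that real point is in hand, (c) and (d) are routine.
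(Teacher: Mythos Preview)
Your approach is correct and genuinely different from the paper's. Both proofs use Tarski transfer between $F_P$ and $\R$, but they differ in how they extract \emph{openness}. You find a single real point $(\bar x_0,\alpha_0)$ with $m_{\bar x_0}(\alpha_0)=0$, $m'_{\bar x_0}(\alpha_0)\neq 0$, and $g_i(\bar x_0,\alpha_0)>0$ via transfer (this works: $m_{\bar X}$ is the minimal polynomial, hence irreducible and separable, so $m'_{\bar X}(\alpha)\neq 0$ in $F_P$; add the finitely many inequations making the rational coefficients of $m$ defined and the leading coefficient nonzero), and then invoke the implicit function theorem plus continuity of the $g_i$ to propagate to an open neighborhood. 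The paper instead expresses $S$ algebraically via the sign-counting formula $|\{c : m_{\bar x}(c)=0,\ g_i(\bar x,c)>0\}| = 2^{-r}\sum_e N_e(\bar x)$ and Sturm sequences, reduces to sets of the form $\{v(m_{\bar x},g^e)=k\}$, intersects with an open set $Z$ on which all Sturm leading coefficients are nonzero so that the sign-change count becomes an open condition, and only then uses transfer to show $S\cap Z\neq\varnothing$. Your route is shorter and more analytic; the paper's is purely semi-algebraic and would work verbatim over any real closed base in place of $\R$.

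Two small corrections to your write-up. First, there is no need to ``replace $m_{\bar X}$ by a suitable irreducible factor'': it is already the minimal polynomial, hence irreducible and squarefree, so $m'_{\bar X}(\alpha)\neq 0$ is automatic. Second, your remarks about arranging algebraic independence of $\bar x_0$ over $\Q$ do not belong in this lemma: Lemma~\ref{S-open} only asserts that $S$ contains a non-empty open set, and the algebraic-independence step is exactly the content of the \emph{next} lemma (density of transcendental tuples in any non-empty open subset of $\R^n$). Your step (b) is not deferred by the paper; it is carried out inside the proof of Lemma~\ref{S-open} itself, and what the following lemma handles is precisely the passage from ``non-empty open'' to ``contains an algebraically independent tuple''.
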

\begin{proof}
  For $e \in \{1,2\}^r$, let $g^e := g_1^{e_1} \cdots g_r^{e_r}$ and 
  \[N_e(\bar x) := \sum_{c \in \R:\ m_{\bar x}(c) = 0} \sgn g^e(\bar x, c),\]
  where $\sgn$ denotes the sign function.
  By \cite[Proposition~1.3.36]{S24} we have
  \[\bigl|\{c \in \R \mid m_{\bar x}(c) = 0,\ g_1(\bar x,c) >0, \ldots,\ 
  g_r(\bar x,c) > 0\}\bigr| = 
  2^{-r} \sum_{e \in \{1,2\}^r} N_e(\bar x),\]
  for all $\bar x\in\R^n$,
  and thus, 
  \[S = \Bigl\{\bar x \in \R^n \Bigmid \sum_{e \in \{1,2\}^r} N_e(\bar x) \ge 
  2^r\Bigr\}.\]
  Using this description of $S$, we show that $S$
  contains an open subset. Listing all the possible values of $N_e(\bar x)$,  
  for $e \in \{1,2\}^r$, 
  whose sum gives a result greater than or equal to $2^r$, we see
  that $S$ can be expressed as a finite union of finite intersections of
  sets of the form
  \[S_{e,\ell} := \{\bar x \in \R^n \mid N_e(\bar x) =
  \ell\},\]
  for some $e \in \{1,2\}^r$ and $\ell \in \N \cup \{0\}$. 
  More precisely, we write
  \[S = \bigcup_{i \in I} \bigcap_{(e,\ell) \in E_i} S_{e, \ell},\]
  where $I$ and each $E_i$ are finite. We will show that each set $S_{e, \ell}
  \cap Z$ is an open subset of $\R^n$ 
  (for $i \in I$ and $(e,\ell) \in E_i$), where $Z$ is an open set that will be 
  defined
  below, thus proving that $S \cap Z$ is open. We will then show that 
  $S \cap Z$ is non-empty. 
  
  We
  first show that $S_{e, \ell} \cap Z$ is open (this will help us decide what
  $Z$ should be).
  For $i \in I$ and $(e, \ell) \in E_i$, let $(f_{e,0}, \ldots, f_{e,t_e})
  \in \Q(x_1, \ldots, x_n)[X]$ be the Sturm sequence of $m$ and $g^e$, and
  \begin{itemize}
    \item let $p_e:=(p_{e,1}, \ldots, p_{e,t_e})$ be the sequence of 
    coefficients 
    of the highest
      degree terms of $(f_{e,1}(X), \ldots, f_{e,t_e}(X))$;
    \item let $\tilde p_e:= (\tilde p_{e,1}, \ldots, \tilde p_{e,t_e})$ be the 
    sequence of coefficients of
      the highest degree terms of $(f_{e,1}(-X), \ldots, f_{e,t_e}(-X))$.
  \end{itemize}

  Let $v(m_{\bar x},g^e)$ be the number of sign changes in the sequence $p_e$
  and $\tilde v(m_{\bar x},g^e)$ the number of sign changes in the sequence
  $\tilde p_e$. By \cite[Corollary~1.2.12]{BCR}, $N_e(\bar x)$ is equal to
  $\tilde v(m_{\bar x},g^e) - v(m_{\bar x},g^e)$, and thus
  \[S_{e, \ell} = \{\bar x   \in \R^n \mid \tilde v(m_{\bar x},g^e) - v(m_{\bar 
  x},g^e)
  = \ell\}.\]
  Listing all the possible values of $\tilde v(m_{\bar x},g^e)$ and $v(m_{\bar
  x},g^e)$ whose difference gives $\ell$, we see that $S_{e, \ell}$ is a finite
  union of sets of the form
  \[T_1 := \{\bar x  \in \R^n \mid \tilde v(m_{\bar x},g^e) = k_1 \wedge 
  v(m_{\bar x}, g^e) = k_2\},\]
  with $k_1, k_2 \in \N \cup \{0\}$. We define
  \[
    Z := \{\bar x  \in \R^n \mid  \text{all } p_{e,j} \text{ and all }
      \tilde p_{e,j} \text{ are} \not = 0, 
     \text{ for all } i \in I \text{ and } (e, \ell) \in E_i\}.
  \] 
  The idea behind the introduction of $Z$ is that, if $\bar x  \in
  Z$, then the various sequences $p_e$ and $\tilde p_e$ never contain a zero,
  which makes it easier to describe their sign changes.

  The set $Z$ is clearly open. We show that $T_1 \cap Z$ is open, and for
  this it suffices to show that if
  \[T_2 :=  \{\bar x  \in \R^n \mid v(m_{\bar x},g^e) = k\}\]
  with $k\in\N\cup\{0\}$,
  then $T_2 \cap Z$ is open (since the other condition, on $\tilde v$, can be
  checked in the same way).

  Since, for $\bar x \in Z$, the coefficients of $p_e$ are all
  non-zero, we can express that $\bar x $ is in $T_2 \cap Z$ by
  listing all the configurations $p_{e,i} < 0$ / $p_{e,i} >0$ that enumerate 
  all the
  different ways in which $k$ sign changes can be obtained in the 
  sequence $(p_{e,1},\ldots, p_{e,t_e})$. But this
  clearly defines an open subset of $\R^n$.
  \medskip

  We now check that $S\cap Z$ is non-empty using some basic model theory.  Let
  $\vf(\omega_1, \ldots, \omega_n)$ be the following
  first-order formula in the language
  of rings:
  \[\exists \alpha \ m_{\bar \omega}(\alpha) = 0 \wedge \bigwedge_{i=1}^r
  g_i(\omega_1, \ldots, \omega_n, \alpha) > 0.\]
  By choice of $P \in X_F$ in the proof of Proposition~\ref{arch-dense},  we
  have
  \[(F,P) \models \vf(X_1, \ldots, X_n),\]
  Since $\{X_1, \ldots, X_n\}$ is a transcendence basis of $F$ over $\Q$, if 
  \[\{r_j(x_1, \ldots, x_n)\}_{j \in J}\] 
  is the finite list of all polynomials 
  that appear in the
  definition of the set $Z$ above (the polynomials 
  $p_{e,j}$ and $\tilde p_{e,j}$), we have
    \[(F,P) \models \vf(X_1, \ldots, X_n) \wedge \bigwedge_{j \in J} r_j(X_1,
  \ldots, X_n) \not = 0.\]
  Since the formula $\vf$ is existential, it follows that
    \[(F_P, \tP) \models \vf(X_1, \ldots, X_n) \wedge \bigwedge_{j \in J} 
      r_j(X_1,
        \ldots, X_n) \not = 0.\]
  In particular,
    \[(F_P, \tP) \models \exists x_1, \ldots, x_n \ \vf(x_1, \ldots, x_n)
    \wedge \bigwedge_{j \in J} r_j(x_1,
  \ldots, x_n) \not = 0,\]
  and thus, by Tarski's transfer principle (cf. \cite[Corollary~11.5.4]{mar08}),
  \[(\R, \R_{\geq 0}) \models \exists x_1, \ldots, x_n \ \vf(x_1, \ldots, x_n)
  \wedge \bigwedge_{j \in J} r_j(x_1,
  \ldots, x_n) \not = 0.\]
  The open set $S\cap Z$ is thus non-empty.
\end{proof}

\begin{lemma}
  The set $S$ defined in Lemma~\ref{S-open}
  contains a tuple $(x_1, \ldots, x_n)$ of elements that
  is algebraically independent over $\Q$.
  In particular, the Claim in the proof of Proposition~\ref{arch-dense} 
  is verified.
\end{lemma}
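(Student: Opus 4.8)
The plan is to show that the algebraic independence condition cuts out a \emph{dense} subset of $\R^n$, so that it must meet the non-empty open set $S\cap Z$ produced in Lemma~\ref{S-open}. Concretely, let $A\subseteq\R^n$ denote the set of tuples $(x_1,\dots,x_n)$ that are algebraically independent over $\Q$. Since $\Q[X_1,\dots,X_n]$ is countable, the complement $\R^n\sm A=\bigcup_{p}\{\bar x\mid p(\bar x)=0\}$, where $p$ ranges over the nonzero polynomials in $\Q[X_1,\dots,X_n]$, is a countable union of proper (hence nowhere dense, being closed with empty interior) algebraic subsets of $\R^n$. By the Baire category theorem $A$ is dense in $\R^n$ (in fact its complement is meagre).

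Now I would invoke Lemma~\ref{S-open}, which gives a non-empty open subset $S\cap Z\subseteq S$ of $\R^n$. By density of $A$ we may pick $(x_1,\dots,x_n)\in A\cap(S\cap Z)\subseteq A\cap S$. This tuple is algebraically independent over $\Q$ and lies in $S$, which is exactly the first assertion.

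For the ``in particular'' part, I would simply unwind the definitions to see that this tuple furnishes the $y_i$ and $\beta$ required by the Claim in the proof of Proposition~\ref{arch-dense}. Set $y_i:=x_i$ for $i=1,\dots,n$. Then (1) holds since $(y_1,\dots,y_n)\in A$ is algebraically independent over $\Q$. Because $(y_1,\dots,y_n)\in S$, by the definition of $S$ the polynomial $m_{\bar y}(X)$ has a root $\beta\in\R$ with $g_i(\bar y,\beta)>0$ for $i=1,\dots,r$; this root $\beta$ gives (2) and (3) at once. Hence all three conditions of the Claim are met.

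The only point requiring any care is the nowhere-density of each hypersurface $\{p=0\}$ for $p\in\Q[X_1,\dots,X_n]\sm\{0\}$: this is standard, since such a set is closed and, having a nonzero polynomial vanishing on it, cannot contain any ball (a nonzero polynomial cannot vanish on a non-empty open subset of $\R^n$). With that in hand the Baire argument is routine, and there is no serious obstacle.
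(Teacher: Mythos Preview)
Your argument is correct. The paper's proof takes a slightly different, more hands-on route: rather than invoking Baire, it observes that the non-empty open subset of $S$ contains a box $I_1\times\cdots\times I_n$ of open intervals, and then builds the algebraically independent tuple inductively, choosing $x_{k+1}\in I_{k+1}$ transcendental over $\Q(x_1,\dots,x_k)$ at each step (possible because the algebraic closure of a countable field in $\R$ is countable while $I_{k+1}$ is not). Both arguments ultimately rest on the countability of $\Q[X_1,\dots,X_n]$; yours packages this via the Baire category theorem and the nowhere-density of real algebraic hypersurfaces, which is a bit more conceptual and avoids the coordinate-by-coordinate induction, while the paper's version is marginally more elementary since it needs only a cardinality comparison rather than completeness of $\R^n$. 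Your verification of the Claim is exactly as in the paper.
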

\begin{proof}
  By Lemma~\ref{S-open}, there exist non-empty
  open intervals $I_1, \ldots, I_n$ 
  of $\R$ such that
  $I_1 \x \cdots \x I_n \subseteq S$. 
  Let $x_1 \in I_1$ be transcendental over $\Q$. Assume that we have
  obtained $x_1 \in I_1, \ldots, x_k \in I_k$ with $(x_1, \ldots, x_k)$
  algebraically independent over $\Q$, for some $k < n$. Since $I_{k+1}$ is
  uncountable, there is $x_{k+1} \in I_{k+1}$ such that $(x_1, \ldots, x_{k+1})$
  is algebraically independent over $\Q$, and we conclude by induction.
\end{proof}

Let $L$ be the language of rings $L_r$ together with
$\{\ul{\s},\ul{F},\ul{P},\ul{\CP}\}$, where $\ul{\s}$ is a new
unary function symbol
and $\ul{F},\ul{P},\ul{\CP}$ are new unary relation symbols.

If $\CP$ is a prepositive cone on $(A,\s)$ over $P \in X_F$, we denote by $\CA$
the $L$-structure consisting of the algebra $A$ with the obvious
interpretation of the symbols of $L$: $\ul{\s}$ is interpreted by $\s$,
$\ul{F}$ by $F$, $\ul{P}$ by $P$, and $\ul{\CP}$ by $\CP$.

\begin{lemma}\label{elementary}
  Let $P \in X_F$ and let $\CP$ be a positive cone on $(A,\s)$ over $P$.
  Assume that $P$ belongs to the closure of the set of
  archimedean orderings of $F$. Then there is an elementary extension $N$ of 
  $F$ 
  (in the language $L_r$)
  and an ordering
  $Q$ on $N$ extending $P$ such that
  \begin{enumerate}[label={\rm(\arabic*)}]
    \item $(N,Q)$ is dense in its real closure;
    \item\label{elementary-2} There is a positive cone $\CQ$ on $(A \ox_F N, \s 
    \ox \id)$ over $Q$
      such that $\CP \ox 1 \subseteq \CQ$.
  \end{enumerate}
\end{lemma}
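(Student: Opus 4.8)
The plan is to obtain $N$, $Q$ and $\CQ$ from a model of a carefully chosen first‑order theory, via compactness, using the hypothesis on $P$ only through the topology of $X_{(A,\s)}$.

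\smallskip
\noindent\emph{The theory.} Expand $L$ by constant symbols for a fixed $F$‑basis $e_1,\dots,e_m$ of $A$ and for every element of $F$ (hence, via the basis, a name $\ul a$ for every $a\in A$). Let $T$ be the $L$‑theory consisting of: the elementary diagram of the \emph{ring} $F$ in the language $L_r$, with all quantifiers relativised to $\ul F$ (it is essential here that we take the ring‑language diagram, not the ordered‑field one); the finitely many sentences expressing that the underlying $L$‑structure is of the form $(A\ox_F\ul F,\ \s\ox\id)$ over $\ul F$ with $\ul F = Z(A)\cap\Sym(A,\ul\s)$, that $\ul P$ is an ordering of $\ul F$, and that $\ul\CP$ is a prepositive cone on $(A\ox_F\ul F,\s\ox\id)$ over $\ul P$ (the constants $\ul e_i$ pin down the algebra structure and $\ul\s$); the sentences $\ul P(\ul t)$ for $t\in P$ and $\neg\,\ul P(\ul t)$ for $t\in F\setminus P$; the sentences $\ul\CP(\ul a)$ for $a\in\CP$; and the standard elementary axioms expressing that the ordered field $(\ul F,\ul P)$ is dense in its real closure (recall that the class of such ordered fields is elementary). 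A model $\mathfrak M$ of $T$ gives exactly the statement: $N:=\ul F^{\mathfrak M}$ is an elementary extension of $F$ in $L_r$; the $N$‑algebra with involution underlying $\mathfrak M$ is $(A\ox_F N,\s\ox\id)$; $Q:=\ul P^{\mathfrak M}$ is an ordering of $N$ with $Q\cap F=P$, hence extending $P$; $(N,Q)$ is dense in its real closure; and $\ul\CP^{\mathfrak M}$ is a prepositive cone on $(A\ox_F N,\s\ox\id)$ over $Q$ containing $\CP\ox1$. Finally, enlarge $\ul\CP^{\mathfrak M}$ to a positive cone $\CQ$ by Zorn's lemma; since an ordering is maximal among preorderings, a positive cone containing a prepositive cone over $Q$ is again over $Q$, so $\CQ$ is over $Q$ and $\CP\ox1\subseteq\CQ$.

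\smallskip
\noindent\emph{Consistency of $T$.} By compactness it suffices to satisfy an arbitrary finite $T_0\subseteq T$. Apart from the structural axioms and finitely many sentences from the $L_r$‑diagram of $F$, such a $T_0$ contains only finitely many membership axioms $\ul P(\ul t_1),\dots,\ul P(\ul t_p)$, $\neg\,\ul P(\ul s_1),\dots,\neg\,\ul P(\ul s_q)$ (with $t_i\in P$ and $s_j\in F\setminus P$, hence $-s_j\in P$ and $s_j\neq0$) and $\ul\CP(\ul a_1),\dots,\ul\CP(\ul a_k)$ (with $a_l\in\CP$), together with finitely many density axioms, which are implied by one asserting that roots of polynomials of degree $\le n$ can be isolated in arbitrarily short intervals with endpoints in $\ul F$. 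I claim that the $L$‑structure $(A,+,\cdot,\s,F,R^*,\CP^*)$ satisfies $T_0$ for a suitable archimedean ordering $R^*$ of $F$ and positive cone $\CP^*$ over $R^*$: the structural axioms hold because $\s$ is an involution, $F=Z(A)\cap\Sym(A,\s)$, $R^*$ is an ordering and $\CP^*$ is a prepositive cone over $R^*$; the $L_r$‑diagram sentences hold because their interpretation involves only the ring $\ul F=F$ and makes no mention of any ordering or of $\ul\CP$, so replacing $P$ by $R^*$ is harmless; the ordered field $(F,R^*)$ is dense in its real closure because an archimedean ordered field contains $\Q$, which is dense in $\R$; and the membership axioms hold provided $t_1,\dots,t_p\in R^*$, $s_1,\dots,s_q\notin R^*$ and $a_1,\dots,a_k\in\CP^*$.

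\smallskip
\noindent\emph{Producing $R^*$ and $\CP^*$.} It remains to find a positive cone $\CP^*\in X_{(A,\s)}$ lying in the open set $H_\s(a_1,\dots,a_k)\cap\pi^{-1}\!\bigl(H(t_1,\dots,t_p,-s_1,\dots,-s_q)\bigr)$ whose associated ordering $\CP^*_F$ is archimedean (this forces $t_i,-s_j\in\CP^*_F$, hence $s_j\notin\CP^*_F$). This open set is a neighbourhood of $\CP$, since $a_l\in\CP$ and $P=\CP_F\in H(t_1,\dots,-s_q)$; so it is enough to know that $\CP$ lies in the closure in $X_{(A,\s)}$ of $\pi^{-1}(\CS)$, where $\CS\subseteq X_F$ is the set of archimedean orderings. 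This follows from the hypothesis $P\in\overline{\CS}$ and the fact that $\pi$ is continuous and open (Proposition~\ref{pi-cont}): for a continuous open map one has $\pi^{-1}(\overline{\CS})\subseteq\overline{\pi^{-1}(\CS)}$, and $\CP\in\pi^{-1}(P)\subseteq\pi^{-1}(\overline{\CS})$.

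\smallskip
The main obstacle is the finite‑satisfiability step, and within it the passage to a positive cone over a nearby archimedean ordering of $F$ containing the prescribed finitely many elements of $\CP$; this is exactly what the openness of $\pi$ and the density hypothesis on $P$ supply. (A minor point to be precise about is the standard elementary axiomatisation of ``dense in its real closure'', which is routine.) As a by‑product one gets $m_P(A,\s)=n_P(A,\s)$, but this is not needed for the statement.
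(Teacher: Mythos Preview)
Your argument is correct and follows essentially the same route as the paper: a compactness argument in the expanded language $L$, with the theory combining the $L_r$-elementary diagram of $F$, the structural data of $(A,\s)$ via a fixed basis, the prepositive-cone axioms, the density-in-real-closure scheme, and the membership constraints on $\ul P$ and $\ul\CP$; finite satisfiability is then secured by producing an archimedean ordering $R^*$ close to $P$ and a positive cone $\CP^*$ over $R^*$ containing the finitely many prescribed elements of~$\CP$, using that $\pi$ is open (Proposition~\ref{pi-cont}). The only cosmetic differences are that you also include the (redundant) negative atoms $\neg\,\ul P(\ul s)$ for $s\notin P$, and that you phrase the key topological step via the general fact $\pi^{-1}(\overline{\CS})\subseteq\overline{\pi^{-1}(\CS)}$ for continuous open $\pi$, whereas the paper argues directly that $H(u_1,\dots,u_k)\cap\pi(H_\s(a_1,\dots,a_\ell))$ is an open neighbourhood of $P$ and hence meets the set of archimedean orderings.
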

\begin{proof}
  Let $\Phi$ be the collection of formulas (without parameters) in the language
  of ordered fields expressing that an ordered field is dense in its real
  closure (cf. \cite[Remark~4.4]{KKL} and the references mentioned there), with
  quantifiers relativized to $\ul{F}$ (the formulas will be interpreted in an
  $L$-structure such as $\CA$, in which case we want them to be true if and
  only if they  are true in the interpretation of $\ul{F}$).

  Let $\Delta(F)$ be the complete theory (with parameters) of $F$ in the
  language $L_r$ (i.e., the set of all first-order $L_r$-formulas with 
  parameters in
  $F$ that are true in $F$), and where the quantifiers are relativized to
  $\ul{F}$.

  Fix an $F$-basis $\{e_1, \ldots, e_m\}$ of $A$. We define the structure
  constants $f_{ijk} \in F$ of $A$ with respect to this basis by:
  \[e_i e_j = \sum_{k=1}^m f_{ijk} e_k, \text{ for } 1 \le i,j \le m,\]
  and the constants $f_{\s ik}$ defining $\s$ by
  \[\s(e_i) = \sum_{k=1}^m f_{\s ik} e_k, \text{ for } 1\leq i \leq m.\]

  We consider the set of $L$-formulas:
  \begin{align*}
    \Omega := & \Delta(F) \cup \Phi \cup \{\ul{P} \text{ ordering on } \ul{F}\}
    \cup \{\ul{\CP} \text{ is a prepositive cone
    over } \ul{P}\} \\
    & \cup \{\{e_1, \ldots, e_m\} \text{ is a basis over } \ul{F}\} \cup 
    \Bigl\{ e_i e_j = \sum_{r=1}^m f_{ijk} e_k \Bigmid 1 \le i,j \le m\Bigr\} \\
    & \cup \Bigl\{\ul{\s}(e_i) = \sum_{k=1}^m f_{\s ik} e_k \Bigmid 1\leq i 
    \leq m \Bigr\} \\
    & \cup \{u \in \ul{P} \mid\ u \in P\} \cup \{a \in \ul{\CP} \mid a
    \in \CP\}.
  \end{align*}
  Let $S$ be a finite subset of $\Omega$. Thus, $S$ is included in
  \begin{align*}
    S' := & \Delta(F) \cup \Phi \cup \{\ul{P} \text{ ordering on }
    \ul{F}\} \cup \{\ul{\CP} \text{ is a prepositive cone over } \ul{P}\} \\
    & \cup \{\{e_1, \ldots, e_m\} \text{ is a basis over } \ul{F}\} \cup 
    \Bigl\{ e_i e_j = \sum_{r=1}^m f_{ijk} e_k \Bigmid 1 \le i,j \le m\Bigr\} \\
    & \cup \Bigl\{\ul{\s}(e_i) = \sum_{k=1}^m f_{\s ik} e_k \Bigmid 1\leq i 
    \leq m\Bigr\} \\
    & \cup \{u_1 \in \ul{P}, \ldots, u_k \in \ul{P}\} \cup \{a_1 \in
    \ul{\CP}, \ldots, a_\ell \in \ul{\CP}\},
  \end{align*}
  for some $u_1, \ldots, u_k \in P$ and $a_1, \ldots, a_\ell \in \CP$.

  Consider the open set $H(u_1, \ldots, u_k)$ of $X_F$ and the open set
  $H_\s(a_1, \ldots, a_\ell)$ of $X_{(A,\s)}$. Clearly, $P \in H(u_1, \ldots,
  u_k)$ and $\CP \in H_\s(a_1, \ldots, a_\ell)$. Recall that the map $\pi :
  X_{(A,\s)} \rightarrow X_F$, $\pi(\CQ) = \CQ_F$ is open by
  Proposition~\ref{pi-cont}. 
  Therefore $H(u_1, \ldots, u_k) \cap \pi(H_\s(a_1, \ldots,
  a_\ell))$ is an open subset of $X_F$ 
  containing $P$, and thus contains an ordering $P'$ 
  such that $(F,P')$ is archimedean by 
  hypothesis. Since $P' \in
  \pi(H_\s(a_1, \ldots, a_\ell))$, there is a positive cone $\CP'$ on $(A,\s)$
  over $P'$ such that $a_1, \ldots, a_\ell \in \CP'$.

  In particular, the $L$-structure $(A,\s,F,P',\CP')$ is a model of $S'$.
  (Recall that an archimedean ordered field is dense in its real closure.
  This follows directly from \cite[Theorem~1.1.5]{PD}.)
  Therefore every finite subset of $\Omega$ has a model, so that $\Omega$ has a
  model $\CB = (B, \tau, N, Q, \CS)$ by the compactness theorem. 

  By construction, $N$ is an elementary extension of $F$, $P \subseteq Q$, 
  $\CP\subseteq\CS$,  and
  $(N,Q)$ is dense in its real closure. 
  
  To prove statement \ref{elementary-2}, we first check that $(B, \tau) \cong 
  (A \ox_F N,
  \s \ox \id_N)$:  the structure constants of $B$ with respect
  to $\{e_1, \ldots, e_m\}$ are by construction 
  the same as those of $A$, and therefore as those
  of $A \ox_F N$. Thus, since both $B$ and $A \ox_F N$ are algebras over $N$ of 
  the
  same dimension, they are
  isomorphic, and the isomorphism is induced by $e_i \mapsto e_i \ox 1$
  for $1\leq i\leq m$.  Similarly, the
  $N$-linear maps $\tau$ and $\s \ox \id_N$ have the same matrix 
  with respect to $\{e_1,
  \ldots, e_m\}$ and $\{e_1 \ox 1, \ldots, e_m \ox 1\}$, respectively, 
  so that the
  algebras with involution $(B, \tau)$ and $(A \ox_ F N, \s \ox \id_N)$ are
  isomorphic via
  \[\xi : B \rightarrow A \ox_F N, \ e_i \mapsto e_i \ox 1.\]

  The set $\xi(\CS)$ is a prepositive cone on $(A\ox_F N,\s\ox\id_N)$ 
  over $Q$, so is
  included in a positive cone $\CQ$ on $(A \ox_ F N, \s \ox \id_N)$
  over $Q$. We have $\CP
  \subseteq \CS$, so that $\xi(\CP) \subseteq \CQ$, i.e., $\CP \ox 1 \subseteq
  \CQ$.
\end{proof}

\begin{prop}[{\cite[Proposition~6.7]{A-U-pos}}]\label{max=deg}\mbox{}
  Let $\CP$ be a positive cone on $(A,\s)$ over $P \in X_F$. There is
  $\ve \in \{-1,1\}$ such that for every $a \in \CP\cap A^\x$,
  $\sign^\mu_P \qf{a}_\s = \ve n_P(A,\s)$.
  In particular, $m_P(A,\s) = n_P(A,\s)$ for every $P \in X_F \setminus
  \Nil[A,\s]$. 
\end{prop}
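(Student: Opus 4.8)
The strategy is to reduce everything to the dense-in-real-closure case, which is already handled by Proposition~\ref{dense-rc}, and then transfer the conclusion back down using the elementary extension constructed in Lemma~\ref{elementary}. The existence of the sign $\ve$ follows immediately from Corollary~\ref{same-signature}: all elements of $\CP\cap A^\x$ have the same $\mu$-signature at $P$, so it remains only to identify that common value with $\pm n_P(A,\s)$. Since $m_P(A,\s)\le n_P(A,\s)$ always holds (by \cite[Proposition~4.4(iii)]{A-U-PS}) and, by Proposition~\ref{lausanne}, $\CM^\mu_P(A,\s)$ (or $\CC_P(\CM^\mu_P(A,\s))$) gives rise to a positive cone over $P$ realizing the value $m_P(A,\s)$, the ``in particular'' clause about $m_P(A,\s)=n_P(A,\s)$ will drop out once we show that some (equivalently, by Corollary~\ref{same-signature}, every) $a\in\CP\cap A^\x$ has $|\sign^\mu_P\qf{a}_\s| = n_P(A,\s)$.

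First I would invoke Proposition~\ref{arch-dense}-style reasoning in the form it is actually needed here: by Lemma~\ref{elementary}, since (after first embedding $F$ suitably, or by passing through a finitely generated subfield) $P$ lies in the closure of the archimedean orderings, there is an elementary extension $N\supseteq F$ in $L_r$ and an ordering $Q$ on $N$ extending $P$ such that $(N,Q)$ is dense in its real closure and a positive cone $\CQ$ on $(A\ox_F N,\s\ox\id)$ over $Q$ with $\CP\ox 1\subseteq\CQ$. Here one must be slightly careful that the hypothesis of Lemma~\ref{elementary} (``$P$ belongs to the closure of the archimedean orderings'') is available; this is where Proposition~\ref{arch-dense} enters, after replacing $F$ by a finitely generated subfield over which $(A,\s)$, $\mu$, and enough of $\CP$ are already defined, and then extending scalars — an elementary-substructure argument that is routine but needs to be spelled out. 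Assuming this, pick $a\in\CP\cap A^\x$; then $a\ox 1\in\CQ\cap(A\ox_F N)^\x$, and by Proposition~\ref{dense-rc} together with Corollary~\ref{same-signature} applied to $\CQ$ we get $|\sign^\mu_Q\qf{a\ox 1}_{\s\ox\id}| = m_Q(A\ox_F N,\s\ox\id) = n_Q(A\ox_F N,\s\ox\id)$.

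Next I would identify the invariants on the two sides. Since $N$ is an elementary extension of $F$, the integer $n_P(A,\s)$ is unchanged under scalar extension to $N$: the decomposition \eqref{nP} and the value of $n_P$ are determined by first-order data (the Brauer class and type, or concretely the splitting behaviour over the real closure), so $n_Q(A\ox_F N,\s\ox\id) = n_P(A,\s)$. On the other hand, signatures are stable under ordered-field embeddings by Theorem~\ref{embord}: applying it to the inclusion $(F,P)\hookrightarrow(N,Q)$ gives $\sign^\mu_P\qf{a}_\s = \sign^{\mu\ox N}_Q\qf{a\ox 1}_{\s\ox\id}$. Combining, $|\sign^\mu_P\qf{a}_\s| = n_P(A,\s)$, which is the claim (with $\ve$ the common sign furnished by Corollary~\ref{same-signature}). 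Finally, for the ``in particular'' statement: given any $P\in X_F\sm\Nil[A,\s]$, Proposition~\ref{lausanne} produces a positive cone over $P$, to which the main statement applies, yielding an invertible symmetric $a$ with $\sign^\mu_P\qf{a}_\s = \ve n_P(A,\s)$; hence $m_P(A,\s)\ge n_P(A,\s)$, and combined with $m_P(A,\s)\le n_P(A,\s)$ we conclude $m_P(A,\s)=n_P(A,\s)$.

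\textbf{Main obstacle.} The genuinely delicate point is verifying the hypothesis of Lemma~\ref{elementary} — that $P$ (or a suitable avatar of it) lies in the closure of the archimedean orderings — since $F$ is an arbitrary field and Proposition~\ref{arch-dense} is stated only for finitely generated extensions of $\Q$. The fix is the standard ``descend to a finitely generated subfield, then re-extend'' manoeuvre: choose a finitely generated $F_0\subseteq F$ over which $A$, $\s$, $\mu$ and the finitely many elements of $\CP$ one wants to track are all defined, apply Proposition~\ref{arch-dense} there, and transport the resulting archimedean ordering back; but making this interact correctly with the positive cone $\CP$ (which is not finitely generated) and with the elementary-extension bookkeeping in Lemma~\ref{elementary} requires care. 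Everything else — the invariance of $n_P$ under elementary extension and the invariance of signatures under ordered embeddings — is handed to us by the earlier results.
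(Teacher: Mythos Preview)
Your approach is essentially the paper's: descend to a finitely generated subfield $F_0\subseteq F$ (the paper tracks only the structure constants of $A$, the matrix of $\s$, and the coordinates of a single chosen $a\in\CP\cap A^\x$, setting $\CP_0:=A_0\cap\CP$ --- so your worry about $\CP$ not being finitely generated dissolves, since only $a_0$ needs to survive), invoke Proposition~\ref{arch-dense} over $F_0$, apply Lemma~\ref{elementary} to produce $(N,Q,\CQ)$ with $(N,Q)$ dense in its real closure, use Proposition~\ref{dense-rc} there, and transfer the signature back via Theorem~\ref{embord}.

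One small correction: at the step ``by Proposition~\ref{dense-rc} together with Corollary~\ref{same-signature} applied to $\CQ$ we get $|\sign^{\mu\ox N}_Q\qf{a\ox 1}_{\s\ox\id}| = m_Q(A\ox_F N,\s\ox\id)$'', Corollary~\ref{same-signature} only tells you that all invertible elements of $\CQ$ share a \emph{common} signature value; it does not identify that value as $\pm m_Q$. For that you need Theorem~\ref{positive=max}, which pins down $\CQ\cap(A\ox_F N)^\x = \ve\,\CM^{\mu\ox N}_Q(A\ox_F N,\s\ox\id)\setminus\{0\}$ --- and this is precisely what the paper cites at that point. For the equality $n_Q(A\ox_F N,\s\ox\id)=n_P(A,\s)$ the paper uses the concrete Lemma~\ref{npas} (via an inclusion of real closures) rather than your elementarity argument, but either works.
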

\begin{proof}
    By Theorem~\ref{positive=max} there is $\ve \in \{-1,1\}$ such that
    $\sign^\mu_P \qf{a}_\s = \ve m_P(A,\s)$ for every $a \in \CP \cap A^\x$.
    Let $a \in \CP \cap A^\x$, cf \cite[Lemma~3.6]{A-U-pos}. 
    Since $m_P(A,\s)\leq n_P(A,\s)$ (cf. \cite[Proposition~4.4(iii)]{A-U-PS})
    and $\sign_P^\mu \qf{-a}_\s = - \sign_P^\mu \qf{a}_\s$,
    we prove the result by showing that $\sign^\mu_P
    \qf{a}_\s = \pm n_P(A,\s)$.

    Fix a basis $\cB = \{e_i\}_{i \in I}$ of $A$ over $F$, and let $F_0$ be the
    field obtained by adding the following elements to $\Q$, all determined
    with
    respect to the basis $\cB$: the structure constants of $A$,  
    the elements of the matrix of $\s$, 
    and the coordinates of $a$. Let $A_0$ be the $F_0$-algebra
    determined by these structure constants for a given basis $\cB_0 =
    \{e'_i\}_{i \in I}$ (i.e., we build the free $F_0$-algebra generated
    by the elements $e'_i$ and quotient out by the relations determined by
    the structure constants), 
    let $\s_0$ be the $F_0$-linear map on $A_0$ with the
    same matrix as $\s$, and let $a_0 \in A_0$ be the element with the same
    coordinates as $a$ (all with respect to $\cB_0$).  
    Let $\xi : A_0 \ox_{F_0} F \rightarrow A$, $e'_i \ox 1
    \mapsto e_i$. Since $A_0 \ox_{F_0} F$ and $A$ are $F$-algebras with the same
    structure constants with respect to $\{e'_i \ox 1\}_{i \in I}$ and
    $\{e_i\}_{i \in I}$, respectively, and the linear maps $\s_0 \ox 1$ and $\s$
    have the same matrices with respect to these same bases, the map $\xi$ is an
    isomorphism of $F$-algebras with involution from $(A_0 \ox_{F_0} F, \s_0 \ox
    \id)$ to $(A, \s)$, and $\xi(a_0 \ox 1) = a$. Therefore, we can assume for
    simplicity that $\xi$ is the identity map, so that
    $(A_0 \ox_{F_0} F, \s_0 \ox \id) = (A,\s)$, $a_0 \ox 1 = a$, and
    $A_0\subseteq A$.

    Let $P_0:= F_0\cap P$ and $\CP_0:=A_0 \cap \CP$. Note that $a_0 \in \CP_0$.
    By construction, $F_0$ is finitely generated over $\Q$, and $(F,P)$ is an
    ordered extension of $(F_0,P_0)$. By Theorem~\ref{embord} we have, 
    for any reference form
    $\mu_0$ for $(A_0, \s_0)$:
    \[\sign^{\mu_0}_{P_0} \qf{a_0}_{\s_0} = \sign^{\mu_0 \ox F}_{P} \qf{a_0
    \ox 1}_{\s_0 \ox 1} = \sign^{\mu_0 \ox F}_P \qf{a}_\s = \pm \sign^\mu_P
    \qf{a}_\s,\]
    where the final equality holds since a change of reference forms at most
    changes the sign of the signature, cf. 
     \cite[Proposition~3.3(iii)]{A-U-prime}. 
    In particular it suffices to prove
    the result for $(A_0, \s_0)$ and $P_0$, $\CP_0$, $a_0$. Therefore, we simply
    use the original notation and assume that $F$ is finitely generated over
    $\Q$.

  It follows by Proposition~\ref{arch-dense} that $P$ is in the closure of the 
  set
  of archi\-me\-dean orderings on $F$. Let $N$, $Q$ and $\CQ$ be as in
  Lemma~\ref{elementary}. By Proposition~\ref{dense-rc}, and using that
  invertible elements in a given positive cone have  signature 
  equal to $\pm m_P(A,\s)$
  by
  Theorem~\ref{positive=max}, we know that  for every $b \in \CQ\cap 
  (A\ox_F N)^\x$,
  \[
    \sign^{\mu \ox N}_Q \qf{b}_{\s \ox
  \id} = \pm n_Q(A\ox_F N, \s\ox\id)= \pm n_P(A,\s),
  \] 
  where the final equality follows from the fact that $(N,Q)$ is an ordered
  extension of $(F,P)$ and Lemma~\ref{npas} below.
  Since 
  $a \ox 1 \in (\CP \ox 1)\cap (A\ox_F N)^\x \subseteq \CQ \cap (A\ox_F N)^\x$, 
  it follows that
  \[\sign^\mu_P \qf{a}_\s = \sign^{\mu \ox N}_Q \qf{a \ox 1}_{\s \ox \id} =
  \pm n_P(A,\s).\qedhere\]
\end{proof}

\begin{lemma}\label{npas}
   Let $P\in X_F$ and let $(L,Q)$ be an ordered field extension of $(F,P)$.
   Then $n_P(A,\s)= n_Q (A\ox_F L, \s\ox \id)$.
\end{lemma}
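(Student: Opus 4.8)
The plan is to compute both invariants over a common base field. Let $(L_Q,\tQ)$ be a real closure of $(L,Q)$; since real closures of an ordered field are unique up to $F$-isomorphism, the integer $n_Q(A\ox_F L,\s\ox\id)$ may be read off from the decomposition \eqref{nP} of $(A\ox_F L\ox_L L_Q,\s\ox\id\ox\id)=(A\ox_F L_Q,\s\ox\id)$, while $n_P(A,\s)$ is read off from \eqref{nP} for $(A\ox_F F_P,\s\ox\id)$. The chain of ordered fields $(F,P)\subseteq(L,Q)\subseteq(L_Q,\tQ)$ has $L_Q$ real closed, so by the universal property of the real closure there is a unique $F$-embedding $\iota\colon F_P\hookrightarrow L_Q$, and it is order-preserving (its image is the relative algebraic closure of $F$ in $L_Q$). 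Identifying $F_P$ with a subfield of $L_Q$ via $\iota$, we have $(A\ox_F L_Q,\s\ox\id)\cong\bigl((A\ox_F F_P)\ox_{F_P}L_Q,\s\ox\id\ox\id\bigr)$, compatibly with the involutions.

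It therefore remains to base change the decomposition \eqref{nP} of $(A\ox_F F_P,\s\ox\id)$ along $F_P\hookrightarrow L_Q$. Writing $Z(A)=F(\sqrt d)$ with $d\in F$, the case distinction of Section~\ref{csa-sign} (case~(1) versus case~(2), and the sub-cases of Remark~\ref{BS}) depends only on the type of $\s$ and on the sign of $d$, neither of which changes on passing from $P$ to $Q$ since $Q$ extends $P$; so $(A\ox_F F_P,\s\ox\id)$ and $(A\ox_F L_Q,\s\ox\id)$ fall under the same case. In that case \eqref{nP} has the shape $M_{n_P}(D_P)$, respectively $M_{n_P}(D_P)\x M_{n_P}(D_P)^{\op}$, and $D_P\ox_{F_P}L_Q$ is again a division algebra of the corresponding type over $L_Q$: indeed $F_P\ox_{F_P}L_Q=L_Q$; $F_P(\sqrt{-1})\ox_{F_P}L_Q=L_Q(\sqrt{-1})$ is a field since $L_Q$ is formally real; and $(-1,-1)_{F_P}\ox_{F_P}L_Q=(-1,-1)_{L_Q}$ is a division algebra since $L_Q$ is real closed. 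Hence $M_{n_P}(D_P)\ox_{F_P}L_Q\cong M_{n_P}(D_P\ox_{F_P}L_Q)$ is a matrix algebra of size $n_P$ over a division algebra (and likewise after applying $\ox_{F_P}L_Q$ to the product in case~(1)), which exhibits a decomposition \eqref{nP} of $(A\ox_F L_Q,\s\ox\id)$ with the same integer $n_P$. Consequently $n_Q(A\ox_F L,\s\ox\id)=n_P=n_P(A,\s)$.

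Most of this is routine bookkeeping once the common base field $L_Q$ is fixed. The two points needing a little care are the appeal to the universal property of real closures to produce the order-compatible $F$-embedding $F_P\hookrightarrow L_Q$, and the verification that the three division-algebra types are preserved under extension from one real closed field to a larger one — but the latter is immediate from the fact that $L_Q$ is formally real, and the former is entirely standard.
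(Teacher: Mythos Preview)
Your proof is correct and follows essentially the same route as the paper's: pick a real closure $(L_Q,\tQ)$ of $(L,Q)$, embed $F_P$ into $L_Q$, and observe that $D_P\ox_{F_P}L_Q\cong D_{L_Q}$ remains a division algebra, so tensoring the decomposition \eqref{nP} of $A\ox_F F_P$ up to $L_Q$ preserves the matrix size $n_P$. You are simply more explicit than the paper about the justification for the embedding $F_P\hookrightarrow L_Q$ (universal property of the real closure) and about the case-by-case check that $D_P$ stays a division algebra; your digression into case~(1) is harmless but unnecessary, since in the context of Section~\ref{secmax} the lemma is invoked only for $P\notin\Nil[A,\s]$.
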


\begin{proof}
  Recall that $n_P(A,\s)$ is defined via the isomorphism $A\ox_F F_P\cong 
  M_{n_P}(D_{F_P})$, with notation as in Sections~\ref{secqq} and 
  \ref{csa-sign}. Let $(L_Q,\tQ)$ be a real closure of $(L,Q)$.
  Observe that we may assume that $F_P\subseteq L_Q$, and thus that
  $D_{F_P}\ox_{F_P} L_Q\cong D_{L_Q}$, because $D_{F_P} \in \{F_P, 
  F_P(\sqrt{-1}), (-1,-1)_{F_P}\}$. Therefore,
  \[
    A\ox_F L_Q \cong A\ox_F F_P \ox_{F_P} L_Q \cong 
    M_{n_P}(D_{F_P})\ox_{F_P} L_Q \cong M_{n_P}(D_{L_Q}).
  \]
  Hence,
   $n_{\tQ}(A\ox_F L_Q, \s\ox\id) =n_P= n_P(A,\s)$.
  The result follows since $n_Q(A\ox_F L, \s\ox\id)=  n_{\tQ}(A\ox_F L_Q, 
  \s\ox\id)$ by definition.
\end{proof}

\begin{thm}[{\cite[Proposition~5.8]{A-U-pos}}]
  Let $P\in X_F$, let $(L,Q)$ be an ordered field extension of $(F,P)$ and let 
  $\CP$ be a 
  prepositive cone on $(A,\s)$ over $P$. Then $\CP\ox 1_L:=\{a\ox 1_L \mid 
  a\in\CP\}$ is contained in a prepositive cone on $(A\ox_F L, \s\ox\id)$
  over $Q$.
\end{thm}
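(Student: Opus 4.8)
The plan is to reduce to the case of a positive cone, pin that cone down explicitly via the description results proved above, and then push it across the scalar extension using the invariance of $\mu$-signatures under ordered field extensions. Write $(A',\s'):=(A\ox_F L,\s\ox\id)$.

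\emph{Reduction to a positive cone.} First I would enlarge $\CP$ to a positive cone $\CP'$ on $(A,\s)$ over $P$: the union of a chain of prepositive cones over $P$ is again one, so such a $\CP'$ exists by Zorn's lemma, and since $\CP\ox1\subseteq\CP'\ox1$ it suffices to treat $\CP'$. By Theorem~\ref{same-as-ker} we then have $P\notin\Nil[A,\s]$, and this is what makes the scalar extension well behaved: membership of an ordering in $\Nil$ is governed only by the type of the involution and the algebra $D_{F_P}$ (resp.\ $D_{L_Q}$), and $D_{L_Q}\cong D_{F_P}\ox_{F_P}L_Q$ exactly as in the proof of Lemma~\ref{npas}. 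Hence $(A',\s')$ is again an $L$-algebra with involution, $Q\notin\Nil[A',\s']$, and $\mu\ox L$ is a reference form for $(A',\s')$ (as is already used in Theorem~\ref{embord}); in particular $\CM^{\mu\ox L}_Q(A',\s')$ is defined.

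\emph{Identifying the cones and transporting invertible elements.} By Theorem~\ref{positive=max} there is $\ve\in\{-1,1\}$ with $\CP'=\ve\,\CC_P(\CM^\mu_P(A,\s))$ and $\CP'\cap A^\x=\ve\,\CM^\mu_P(A,\s)\smz$. Since $-\CP'$ is again a positive cone over $P$, and $\CP'\ox1$ lies in a prepositive cone over $Q$ exactly when $(-\CP')\ox1$ does, I may assume $\ve=1$; then Proposition~\ref{max=deg} gives $\sign^\mu_P\qf a_\s=n_P(A,\s)$ for every $a\in\CP'\cap A^\x$. Set $\CQ:=\CC_Q(\CM^{\mu\ox L}_Q(A',\s'))$, which is a prepositive cone on $(A',\s')$ over $Q$ by Proposition~\ref{lausanne} (when $A'$ is a division algebra this set coincides with $\CM^{\mu\ox L}_Q(A',\s')$, the prepositive cone that proposition furnishes). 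The key step is: $a\ox1\in\CM^{\mu\ox L}_Q(A',\s')$ for every $a\in\CP'\cap A^\x$. Indeed $a\ox1$ is a symmetric unit of $A'$, and chaining Theorem~\ref{embord}, the equality just obtained, Lemma~\ref{npas} and Proposition~\ref{max=deg} applied to $(A',\s')$ yields
\[
  \sign^{\mu\ox L}_Q\qf{a\ox1}_{\s'}=\sign^\mu_P\qf a_\s=n_P(A,\s)=n_Q(A',\s')=m_Q(A',\s'),
\]
which is precisely membership in $\CM^{\mu\ox L}_Q(A',\s')$.

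\emph{Conclusion.} By Corollary~\ref{PUD}, every $p\in\CP'=\CC_P(\CM^\mu_P(A,\s))$ has the form $\sum_{i=1}^k\s(x_i)s_ix_i$ with $k\in\N$, $x_i\in A$ and $s_i\in\CM^\mu_P(A,\s)=(\CP'\cap A^\x)\cup\{0\}$; applying $-\ox_FL$ writes $p\ox1=\sum_{i=1}^k\s'(x_i\ox1)\cdot(s_i\ox1)\cdot(x_i\ox1)$ with each $s_i\ox1\in\CM^{\mu\ox L}_Q(A',\s')$ (by the key step, together with $0\in\CM^{\mu\ox L}_Q(A',\s')$), so $p\ox1\in\CQ$. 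Hence $\CP\ox1\subseteq\CP'\ox1\subseteq\CQ$, a prepositive cone on $(A\ox_F L,\s\ox\id)$ over $Q$, as required.

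\emph{Expected obstacle.} No single computation should be hard; the delicate point is the base-change bookkeeping of the first step — guaranteeing that $(A',\s')$ is still a central simple $L$-algebra with involution, that $Q$ is not a nil-ordering of it, and that $\mu\ox L$ is a bona fide reference form — so that Proposition~\ref{lausanne}, Proposition~\ref{max=deg}, Corollary~\ref{PUD} and Theorem~\ref{embord} may all be invoked for it. This is exactly what $P\notin\Nil[A,\s]$ provides, which is why enlarging $\CP$ to a positive cone at the outset is the real content of the reduction rather than a mere convenience.
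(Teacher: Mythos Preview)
Your proposal is correct and follows essentially the same route as the paper: reduce to a positive cone, use Theorem~\ref{positive=max} and Corollary~\ref{PUD} to describe it explicitly, transport invertible maximal-signature elements across the extension via Theorem~\ref{embord}, Proposition~\ref{max=deg} and Lemma~\ref{npas}, and land inside $\CC_Q(\CM^{\mu\ox L}_Q(A',\s'))$. The only cosmetic differences are that the paper verifies $Q\notin\Nil[A',\s']$ by exhibiting an element of nonzero signature (rather than by your structural argument on $D_{L_Q}$), and it invokes Theorem~\ref{positive=max} to see that $\CC_Q(\CM^{\mu\ox L}_Q(A',\s'))$ is in fact a positive cone rather than merely the prepositive cone furnished by Proposition~\ref{lausanne}; neither difference is material.
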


\begin{proof}
Up to replacing $\CP$ by a positive cone that contains it, we may assume
that $\CP$ is a positive cone.
By Theorem~\ref{positive=max} we have $P \in X_F \setminus \Nil[A,\s]$, and 
so 
there is $a\in \Sym(A,\s)$ such that $\sign^\mu_P \qf{a}_\s \not = 0$,
cf. Remark~\ref{blobby}. 
Therefore,  by Theorem~\ref{embord},
\[\sign^{\mu \ox L}_Q \qf{a \ox 1}_{\s \ox \id} 
= \sign^\mu_P \qf{a}_\s \not = 0,\] 
and $Q \not \in \Nil[A \ox L, \s \ox \id]$. In particular there are
positive cones on $(A \ox L, \s \ox \id)$ and they are described by 
Theorem~\ref{positive=max}.
By Corollary~\ref{PUD},
\[\CP = \bigcup \{D_{(A,\s)} \qf{a_1, \ldots, a_k}_\s \mid k \in \N, \ a_1, 
\ldots, a_k \in \CM^\mu_P(A,\s)\}.\]
Therefore (using (a) and (b) below),
\begin{align*}
  \CP \ox 1_L &
  \subseteq \bigcup \{D_{(A \ox L,\s \ox \id)} \qf{a_1 \ox 1, 
  \ldots, a_k \ox 1}_\s \mid k \in \N,\\  
  &\strut\hspace{.55\linewidth} a_1, \ldots, a_k \in
                  \CM^\mu_P(A,\s)\} \\
      & \subseteq \bigcup \{D_{(A \ox L,\s \ox \id)} \qf{b_1, \ldots, b_k}_\s 
      \mid k \in \N, \\ 
  &\strut\hspace{.4\linewidth}    b_1, \ldots, b_k \in
                  \CM^{\mu \ox L}_Q(A \ox L, \s \ox \id)\} \\
      & = \CC_Q(\CM^{\mu \ox L}_Q(A \ox L, \s \ox \id)),
\end{align*}
which is a positive cone over $Q$ by Theorem~\ref{positive=max}, and where:
\begin{enumerate}[label=(\alph*)]
  \item The second inclusion uses the fact that $a \in \CM^\mu_P(A,\s)$ implies 
  $a \ox 1
  \in \CM^{\mu \ox L}_Q(A \ox L, \s \ox \id)$ which follows from the fact that
  $m_P(A,\s) = n_P(A,\s) = n_P(A \ox_F L, \s \ox \id) = m_P(A \ox_F L, \s \ox
  \id)$ by Proposition~\ref{max=deg} and Lemma~\ref{npas}.
  
  \item  The final equality follows from Corollary~\ref{PUD}.\qedhere
\end{enumerate}
\end{proof}


\begin{thebibliography}{10}

\bibitem{A-U-kneb}
Vincent Astier and Thomas Unger.
\newblock Signatures of hermitian forms and the {K}nebusch trace formula.
\newblock {\em Math. Ann.}, 358(3-4):925--947, 2014.

\bibitem{A-U-prime}
Vincent Astier and Thomas Unger.
\newblock Signatures of hermitian forms and ``prime ideals'' of {W}itt groups.
\newblock {\em Adv. Math.}, 285:497--514, 2015.

\bibitem{A-U-PS}
Vincent Astier and Thomas Unger.
\newblock Signatures of hermitian forms, positivity, and an answer to a
  question of {P}rocesi and {S}chacher.
\newblock {\em J. Algebra}, 508:339--363, 2018.

\bibitem{A-U-pos}
Vincent Astier and Thomas Unger.
\newblock Positive cones on algebras with involution.
\newblock {\em Adv. Math.}, 361:106954, 48, 2020.

\bibitem{A-U-Az-PLG}
Vincent Astier and Thomas Unger.
\newblock Pfister's local-global principle for {A}zumaya algebras with
  involution.
\newblock {\em To appear in Documenta Mathematica. Preprint at
  https://arxiv.org/abs/2210.04851}, 2025.

\bibitem{BP98}
E.~Bayer-Fluckiger and R.~Parimala.
\newblock Classical groups and the {H}asse principle.
\newblock {\em Ann. of Math. (2)}, 147(3):651--693, 1998.

\bibitem{B-U}
Karim~Johannes Becher and Thomas Unger.
\newblock Weakly hyperbolic involutions.
\newblock {\em Expo. Math.}, 36(1):78--97, 2018.

\bibitem{BCR}
Jacek Bochnak, Michel Coste, and Marie-Fran\c{c}oise Roy.
\newblock {\em Real algebraic geometry}, volume~36 of {\em Ergebnisse der
  Mathematik und ihrer Grenzgebiete (3) [Results in Mathematics and Related
  Areas (3)]}.
\newblock Springer-Verlag, Berlin, 1998.
\newblock Translated from the 1987 French original, Revised by the authors.

\bibitem{CK}
C.~C. Chang and H.~J. Keisler.
\newblock {\em Model theory}, volume~73 of {\em Studies in Logic and the
  Foundations of Mathematics}.
\newblock North-Holland Publishing Co., Amsterdam, third edition, 1990.

\bibitem{garrel-2023}
Nicolas Garrel.
\newblock Mixed {W}itt rings of algebras with involution.
\newblock {\em Canad. J. Math.}, 75(2):608--644, 2023.

\bibitem{G07}
Pierre~Antoine Grillet.
\newblock {\em Abstract algebra}, volume 242 of {\em Graduate Texts in
  Mathematics}.
\newblock Springer, New York, second edition, 2007.

\bibitem{knus91}
Max-Albert Knus.
\newblock {\em Quadratic and {H}ermitian forms over rings}, volume 294 of {\em
  Grundlehren der mathematischen Wissenschaften}.
\newblock Springer-Verlag, Berlin, 1991.
\newblock With a foreword by I. Bertuccioni.

\bibitem{BOI}
Max-Albert Knus, Alexander Merkurjev, Markus Rost, and Jean-Pierre Tignol.
\newblock {\em The book of involutions}, volume~44 of {\em American
  Mathematical Society Colloquium Publications}.
\newblock American Mathematical Society, Providence, RI, 1998.
\newblock With a preface in French by J. Tits.

\bibitem{KKL}
Lothar~Sebastian Krapp, Salma Kuhlmann, and Gabriel Leh\'{e}ricy.
\newblock Ordered fields dense in their real closure and definable convex
  valuations.
\newblock {\em Forum Math.}, 33(4):953--972, 2021.

\bibitem{L-U}
David~W. Lewis and Thomas Unger.
\newblock A local-global principle for algebras with involution and {H}ermitian
  forms.
\newblock {\em Math. Z.}, 244(3):469--477, 2003.

\bibitem{mar08}
Murray Marshall.
\newblock {\em Positive polynomials and sums of squares}, volume 146 of {\em
  Mathematical Surveys and Monographs}.
\newblock American Mathematical Society, Providence, RI, 2008.

\bibitem{PD}
Alexander Prestel and Charles~N. Delzell.
\newblock {\em Positive polynomials}.
\newblock Springer Monographs in Mathematics. Springer-Verlag, Berlin, 2001.
\newblock From Hilbert's 17th problem to real algebra.

\bibitem{Sch}
Winfried Scharlau.
\newblock {\em Quadratic and {H}ermitian forms}, volume 270 of {\em Grundlehren
  der Mathematischen Wissenschaften}.
\newblock Springer-Verlag, Berlin, 1985.

\bibitem{S24}
Claus Scheiderer.
\newblock {\em A course in real algebraic geometry. {Positivity} and sums of
  squares}, volume 303 of {\em Grad. Texts Math.}
\newblock Cham: Springer, 2024.

\bibitem{Zhang}
Fuzhen Zhang.
\newblock Quaternions and matrices of quaternions.
\newblock {\em Linear Algebra Appl.}, 251:21--57, 1997.

\end{thebibliography}
\end{document}